\newtheoremstyle{theorem}{11pt}{11pt}{\slshape}{}{\mathbfseries}{.}{.5em}{}
\newtheoremstyle{note}{11pt}{11pt}{}{}{\mathbfseries}{.}{.5em}{}
\theoremstyle{plain}
  \newtheorem{theoreme}{Theorem}[section]
  \newtheorem{proposition}[theoreme]{Proposition}
  \newtheorem{lemme}[theoreme]{Lemma}
  \newtheorem{corollaire}[theoreme]{Corollary}
  \newtheorem{conj}[theoreme]{Conjecture}
\theoremstyle{definition}
  \newtheorem{definition}[theoreme]{Definition}
\theoremstyle{remark}
  \newtheorem{remarque}[theoreme]{Remark}
  \newtheorem{remarks}[theoreme]{Remarks}
\let\mathcal\mathcal
\let\mathbf\mathbf
\def\C{{\mathbf C}}
\def\R{{\mathbf R}}
\def\O{{\mathcal O}}
\def\zp{{\Z_p}}
\def\qp{{\Q_p}}
\def\p1{{\mathbf P}^1}
\def\qp{\mathbf{Q}_p}
\def\zp{\mathbf{Z}_p}
\def\z{\mathbf{Z}}
\def\cp{\mathbf{C}_p}
\def\q{\mathbf{Q}}
\def\epsilon{\varepsilon}
\begin{document}
\title[De Rham cohomology over the Fargues-Fontaine curve]{Overconvergent relative de Rham cohomology over the Fargues-Fontaine curve}
\author{Arthur-C\'esar Le Bras}
\address{Mathematisches Institut, Universit\"at Bonn, Endenicher Aller 60, 53115 Bonn, Zimmer 4.0.25}
\email{lebras@math.uni-bonn.de}

\begin{abstract}
We explain how to construct a cohomology theory on the category of separated quasi-compact smooth rigid spaces over $\cp$ (or more general base fields), taking values in the category of vector bundles on the Fargues-Fontaine curve, which extends (in a suitable sense) Hyodo-Kato cohomology when the rigid space has a semi-stable proper formal model over the ring of integers of a finite extension of $\qp$, therefore proving a conjecture of Scholze \cite{SICM}. This cohomology theory factors through the category of rigid analytic motives of Ayoub.
\end{abstract}

\maketitle

\stepcounter{tocdepth}
{\Small
\tableofcontents
}

\section{Introduction}

Fargues and Fontaine introduced a few years ago the \textit{fundamental curve of $p$-adic Hodge theory}, now simply called the Fargues-Fontaine curve. It turned out to be indeed a fundamental geometric object : most of $p$-adic Hodge theory can be rephrased using the curve, and recent spectacular work of Fargues and Scholze shows how the curve and several closely related objects allow to reformulate geometrically the local Langlands correspondence. The reader is referred to \cite{FarguesRio} (and references therein) for a nice overview of these recent developments. Let us here simply briefly recall the definition of the Fargues-Fontaine curve. The curve has two incarnations : an \textit{algebraic} one and an \textit{adic} one\footnote{There is also a very simple but important description of the diamond associated to the curve, due to Scholze \cite{SW}.}, which are both useful. We give the latter definition, which is shorter. Let $\cp$ be the completion of an algebraic closure of $\qp$ and $\cp^{\flat}$ its tilt, a complete algebraically closed field of characteristic $p$. One starts with the analytic adic space
\[ Y = \mathrm{Spa}(W(\O_{\cp^{\flat}}),W(\O_{\cp^{\flat}})) \backslash V(p[p^{\flat}]), \]
where $p^{\flat} \in \O_{\cp^{\flat}}$ has first coordinate $p$, i.e. is such that $(p^{\flat})^{\sharp} =p$. One can think of the space $Y$ as the mixed characteristic version of the punctured open unit disk (now \og in the variable $p$ \fg{}). 

The ring of analytic functions on $Y$ is called $B$ ; it is the Fr\'echet algebra obtained by completing the algebra
\[ B^b := \left\{ \sum_{n \gg -\infty} [x_n] p^n , x_n \in \cp^{\flat}, (x_n)_n ~ \mathrm{bounded} \right\} = W(\O_{\cp^{\flat}})[1/p,1/[p^{\flat}]] \]
of \og meromorphic functions along the divisors $p=0$ and $[p^{\flat}]=0$ \fg{} with respect to the norms $\| \cdot \|_{\rho}$, for all $0 < \rho <1$, defined by
\[ \| \sum_n [x_n]p^n \|_{\rho} = \sup_n |x_n| \rho^n. \]
There is a Frobenius operator $\varphi$ on $Y$, whose action on functions is given by the formula
\[ \sum_n [x_n] p^n \mapsto \sum_n [x_n^p] p^n. \]
The action of $\varphi$ on $Y$ is properly discontinuous and one can then define the Fargues-Fontaine curve $X$ (for the local field $\qp$) as the quotient :
\[ X := Y/\varphi^{\z}. \]
The set of classical points of $X$ is in bijection with the set of untilts of $\cp^{\flat}$ in characteristic $0$ (modulo the action of Frobenius) : if $x \in |X|$, the residue field $C_x$ of $X$ at $x$ is an algebraically closed complete valued extension of $\qp$, with $C_x^{\flat} \simeq \cp^{\flat}$. In particular, there is a point called $\infty$ on $X$, such that $C_{\infty}=\cp$. 

In fact, the exact same construction works if one replaces $\cp$ by any algebraically closed complete valued extension $C$ of $\qp$. In this case, one gets adic spaces which will be called $Y_{C^{\flat}}$ and $X_{C^{\flat}}$ and which have exactly the same properties.
\\

It turns out that the Fargues-Fontaine curve $X$ looks like the non-archimedean analog of the twistor projective line $\widetilde{\mathbf{P}}_{\R}^1$, which is the quotient of the complex projective line by the antipodal involution :
\[ \widetilde{\mathbf{P}}_{\R}^1 := \mathbf{P}_{\C}^1 / z \sim -\bar{z}^{-1}. \]
This is a real variety without any real point (exactly like the Fargues-Fontaine curve, whose residue fields at classical points are all algebraically closed). The analogy between the curve and $\widetilde{\mathbf{P}}_{\R}^1$ has so far no precise mathematical meaning, but there are some interesting similarities between these two objects. Here are a few examples :

(a) The set of $G$-torsors over $\widetilde{\mathbf{P}}_{\R}^1$ (resp. $X$) is in bijection with Kottwitz's set $B(G)$ (as defined in \cite{kottwitz}), for $G$ reductive over $\R$ (resp. $\qp$). In the non-archimedean setting, this is the main result of \cite{gtorseurs}, see also \cite{anschuetz} ; in the archimedean setting, this can be proved for example using the strategy of \cite{anschuetz}, as explained to us by Ansch\"utz.

(b) The function field of $\widetilde{\mathbf{P}}_{\R}^1$ is conjectured to be (C1) (Lang \cite{lang}) ; similarly, Fargues (\cite{gtorseurs}) conjectures the function field $\mathrm{Frac}(B[1/t]^{\varphi=1})$ of $X$ to be (C1).

(c) The variety $\widetilde{\mathbf{P}}_{\R}^1$ is the Brauer-Severi variety of Hamilton's quaternion algebra over $\R$ ; similarly, it was conjectured by Fargues that $X$ should be in some sense the Brauer-Severi variety of a division algebra $\mathcal{C}$ introduced by Colmez in \cite{colmez}.
\\

In this paper, we study another instance of this analogy. It is a nowadays classical result of Simpson (\cite[\S 2]{simpson}, see also \cite[\S 3.1-3.4]{mochi}) that the datum of a real pure Hodge structure can be encoded as a $U(1)$-equivariant semi-stable vector bundle on $\widetilde{\mathbf{P}}_{\R}^1$. Let $Z$ be a smooth projective variety over $\C$. The $U(1)$-equivariant vector bundle attached to the cohomology of degree $i$ of $Z$ is obtained by modifying the $U(1)$-equivariant vector bundle
\[ H_{\infty}^i(Z):=H_{\rm Betti}^i(Z,\R) \otimes \O_{\widetilde{\mathbf{P}}_{\R}^1} \]
at the point $\infty \in \widetilde{\mathbf{P}}_{\R}^1$ lying above $0$ and $\infty$ (which is fixed by the $U(1)$-action), using the Hodge filtration on $H_{\rm Betti}^i(Z,\C)$. Similarly, if $Z$ is proper and smooth over $K$, a complete discretely valued extension of $\qp$ with perfect residue field, one can modify the $\mathcal{G}_K$-equivariant vector bundle (obtained by descending to $X$) $H_{\mathcal{FF}}^i(Z):=\mathcal{E}(D_{\rm pst}(H_{\mathrm{\acute{e}t}}^i(Z_{C},\qp))$, where $C=\widehat{\bar{K}}$, using the Hodge filtration on $D_{\rm pst} \otimes K$. This modification is the $\mathcal{G}_K$-equivariant vector bundle $H_{\mathrm{\acute{e}t}}^i(Z_{\cp},\qp) \otimes_{\qp} \O_X$\footnote{The slightly confusing point in this analogy is that it does not match the two trivial vector bundles...}. The first construction relies on the equivalence between filtrations on $V$ and $\C^*$-equivariant lattices in $V\otimes_{\C} \C((t))$, if $V$ is a finite dimensional $\C$-vector space ; the second on the equivalence between filtrations on $V$ and $\mathcal{G}_K$-equivariant lattices in $V\otimes_{K} B_{\rm dR}$, if $V$ is a finite dimensional $K$-vector space. 
\\

Over the field of complex numbers, the vector bundle $H_{\infty}^i(Z)$ can still be defined more generally, by the same formula :
\[ H_{\infty}^i(Z):=H_{\rm Betti}^i(Z,\R) \otimes \O_{\widetilde{\mathbf{P}}_{\R}^1}, \]
which makes sense whenever the Betti/de Rham cohomology is finite dimensional. Our goal in this text is to show that we can also extend the construction of the vector bundle $H_{\mathcal{FF}}^i(Z)$ outside the proper case, and give a geometric definition of this vector bundle. More precisely, let $K$ be a complete valued field containing $\qp$ (not assumed to be discretely valued), $C$ the completion of an algebraic closure of $K$ (for example, $K$ may be a finite extension of $\qp$, or $\cp$) with residue field $k$, and $\mathcal{G}_K=\mathrm{Gal}(C/K)$ the absolute Galois group of $K$. We explain how to construct a cohomology theory
\[ Z \mapsto \mathcal{FF}(Z), \]
on the category of smooth quasi-compact rigid spaces\footnote{All the rigid spaces we consider are assumed to be separated and taut.} defined over $K$, taking values in the bounded derived category $D^b(\mathrm{Coh}_{X_{C^{\flat}}})$ of coherent sheaves on the Fargues-Fontaine curve $X_{C^{\flat}}$, such that if $C$ is the completion of an algebraic closure of $W(k)[1/p]$, $Z$ is a smooth quasi-compact rigid space of dimension $n$ and $i\geq 0$, the coherent sheaf
\[ H_{\mathcal{FF}}^i(Z):=H^i(\mathcal{FF}(Z)) \]
is a vector bundle, which vanishes if $i<0$ or $i>2n$. Under this assumption on $C$, this cohomology theory satisfies several properties explained below and thus confirms Conjecture 6.4 of \cite{SICM} (see also \cite[Conj. 1.13]{FarguesRio}).  

In particular, still under this assumption on $C$, one gets an isocrystal $H_{\rm isoc}^i(Z)$ attached to $Z$, for all $i$ (see Corollary \ref{isocristal} ; there is some caveat).
\\

A few remarks are in order.

\begin{remarque} \label{propercase}
Assume that $K$ is a complete discretely valued extension of $\qp$ with perfect residue field and that $Z$ is smooth and \textit{proper}. We show that for all $i\geq 0$, $H_{\mathcal{FF}}^i(Z)$ is the $\mathcal{G}_K$-equivariant vector bundle on $X_{C^{\flat}}$ attached as in \cite[Ch. 10]{FF} to the $D_{\rm pst}$ of the \'etale cohomology $H_{\mathrm{\acute{e}t}}^i(Z_{C}, \qp)$, using comparison theorems in $p$-adic Hodge theory.
\end{remarque}

\begin{remarque} \label{semistablecase}
When the smooth rigid variety $Z$ over $K$ is such that $Z_C$ has a smooth quasi-compact formal model $\mathfrak{Z}$ over $\O_C$, we expect $H_{\mathcal{FF}}^i(Z)$ to be the vector bundle attached to the isocrystal given by the rigid cohomology of the special fiber of $\mathfrak{Z}$. This implies in particular that this isocrystal is independent of the choice of the formal model. We prove this statement when $\mathfrak{Z}$ is proper, which is easy using the results of Bhatt-Morrow-Scholze \cite{BMS} (in the proper case, the independence of the isocrystal is already known : when $K$ is a complete discretely valued extension of $\qp$ with perfect residue field, it is a corollary of a deep theorem : Fontaine's $C_{\rm crys}$ conjecture ; in general it was recently proved in \cite{BMS}). 

One can also ask a similar question when $\mathfrak{Z}$ is only assumed to be semi-stable, and answer it in the proper case, using the recent work of \u{C}esnavi\u{c}ius-Koshikawa \cite{CJ}.

Note that our construction of $H_{\mathcal{FF}}^i(Z)$ is entirely on the generic fiber and does not involve any choice of formal model.
\end{remarque}

This cohomology theory is quite simple to describe : if $Z$ is as before, $\mathcal{FF}(Z)$ comes by the equivalence between (the derived category of) vector bundles on $X$ and finite projective $\varphi$-modules over $B$ (which are the same as $\varphi$-equivariant vector bundles on $Y$) from an \og overconvergent version \fg{} of the complex $R\Gamma(Z_C, L\eta_t R\nu'_* \mathbb{B}_Z)$, $(\nu'_*,\nu^{' -1})$ being the morphism from the pro-\'etale topos of $Z$ to the \'etale topos of $Z_C$. There are essentially two things to prove about these cohomology groups : a "finiteness" and a "torsion-freeness" result. For both, the key point is to show that this cohomology theory factors through the category of rigid analytic motives of Ayoub (\cite{Ayoub}). This reduces to prove finiteness in the case where $Z$ is smooth and \textit{proper}, where several tools are available. This strategy was inspired by Vezzani's paper \cite{Vezzani}, which studies overconvergent de Rham cohomology using the motivic formalism. This works for any $C$. If $C$ is the completion of an algebraic closure of $W(k)[1/p]$, we also get torsion-freeness, using a trick : any quasi-compact motive over $C$ descends to a motive defined over a discretely valued extension of $\qp$ with perfect residue field (a statement which is false for smooth quasi-compact rigid spaces !) ; therefore, we can use some $p$-adic Hodge theory.  


Let us end this introduction by saying that there should exist a more geometric construction of $\mathcal{FF}(Z)$, making the analogy with Betti cohomology more transparent and explaining the title of this text. Let us start by (trivially !) reformulating the construction of $H_{\infty}^i(Z)$ in the complex case. It is obtained by descending to $\tilde{\mathbf{P}}_{\R}^1$ the vector bundle $H_{\rm Betti}^i(Z,\C) \otimes_{\C} \O_{\mathbf{A}_{\C}^1}$, which, in the diagram, 
\[ \xymatrix{ & Z \times \mathbf{A}_{\C}^1 \ar_p[ld] \ar^q[rd] \\ Z && \mathbf{A}_{\C}^1} \]
is simply
\[ R^iq_* (\Omega_{Z \times \mathbf{A}_{\C}^1/\mathbf{A}_{\C}^1}^{\bullet}). \]
When $Z$ is moreover projective, the twistor is obtained by descending to $\tilde{\mathbf{P}}_{\R}^1$ the sheaf
\[ R^iq_* ((\Omega_{Z \times \mathbf{A}_{\C}^1/\mathbf{A}_{\C}^1}^{\bullet},zd)), \]
where $z$ is the coordinate on $\mathbf{A}_{\C}^1$ (this is Deligne's reinterpretation of Simpson's construction). 

Similarly, in the $p$-adic case, we can look at the diagram of diamonds :
\[ \xymatrix{ & Z^{\diamond} \times \mathrm{Spa}(\qp)^{\diamond} \ar_p[ld] \ar^q[rd] \\ Z^{\diamond} && Y^{\diamond} } \] 
as $Y^{\diamond} = \mathrm{Spa}(\cp)^{\diamond} \times \mathrm{Spa}(\qp)^{\diamond}$. There should exist a complex of \'etale sheaves $"\Omega_{Z^{\diamond} \times \mathrm{Spa}(\qp)^{\diamond}/Y}^{\dagger,\bullet}"$ such that the pull-back of $H_{\mathcal{FF}}^i(Z)$ to $Y$ is
\[ R^iq_* ("\Omega_{Z^{\diamond} \times \mathrm{Spa}(\qp)^{\diamond}/Y}^{\dagger,\bullet}"). \]
In joint work in progress with Alberto Vezzani, we show how to give a meaning to this hypothetical overconvergent relative de Rham complex\footnote{The relation between the pro-\'etale cohomology of $\mathbb{B}$ and Deligne's $\lambda$-connections was also noticed by Liu and Zhu, cf, \cite[Rem. 3.2]{LZ}.}. Still, the point of the present paper is that, even if we don't know what this complex is, $Y$ being Stein, to construct $H_{\mathcal{FF}}^i(Z)$, it is enough to know what
\[ Rp_*  ("\Omega_{Z^{\diamond} \times \mathrm{Spa}(\qp)^{\diamond}/Y}^{\dagger,\bullet}") \]
is : this is precisely (the overconvergent version of) $L\eta_t R\nu'_* \mathbb{B}_Z$. 
\\

\textit{Plan of the text.} Sections \ref{Motives} and \ref{Period sheaves} are devoted to some remainders and some fundamental results about the category of rigid motives, pro-\'etale period sheaves and the d\'ecalage functors. The key technical statement of this text (showing that our cohomology theory $\mathcal{F}$ factors through the category of motives) is proved in Section \ref{Coeur}. We define $\mathcal{FF}$ and prove that its cohomology groups are vector bundles on the Fargues-Fontaine curve when $C$ is the completion of an algebraic closure of some discretely valued field in Section \ref{Construction}. Finally, we briefly explain in Section \ref{relation} how our constructions are related to other cohomology theories in some cases : \'etale, crystalline, syntomic and pro-\'etale cohomologies. 
\\

\textit{Conventions and notations.} The field $K$ is a complete valued field containing $\qp$ ; $C$ the completion of an algebraic closure of $K$, with residue field $k$ (in particular $K$ may very well be $C$ itself !) ; $\mathbf{D}$ is the closed unit disk over $\qp$ and $\mathbf{D}^{\dagger}$ its overconvergent analogue. We fix a compatible system of $p^n$-th roots of unity $\epsilon \in \cp^{\flat}$, with $\epsilon^{(0)}=1$, $\epsilon^{(1)}\neq 1$, and let $t=\log([\epsilon]) \in B$ be \og Fontaine's $2i\pi$ \fg{}. All rigid spaces are assumed to be separated and taut.
\\

\textit{Acknowledgments.} The idea of this paper originated from a conversation with Peter Scholze in Bonn in February 2017. I would like to thank him heartily for this, as well as for his numerous very helpful suggestions and comments. Some of the key cohomological computations of this text were started when redacting Chapter 2 of \cite{theseaclb} as part of my PhD thesis and I thank Laurent Fargues for everything he taught me about period sheaves and the Fargues-Fontaine curve. Special thanks go to Alberto Vezzani for his interest and the time he took to answer my many naive questions about motives. I am indebted to Wieslawa Nizio\l{} who pointed out a gap in a first version of this text. Finally, I would also like to thank K\c{e}stutis \u{C}esnavi\u{c}ius and Teruhisa Koshikawa for interesting conversations and Eugen Hellmann for an invitation to talk about this work.

\section{Motives of (overconvergent) rigid analytic varieties} \label{Motives}
For what follows, the main reference is \cite{Vezzani}. We take $\Lambda=\qp$ as a coefficient ring ; it will be implicit in the notation. In all this section, let $F$ be a characteristic $0$ complete valued field with a non archimedean valuation of rank $1$ and residue characteristic $p$. We recall the definition of the category of rigid motives (and overconvergent rigid motives), which can be thought of as the Verdier quotient of the derived category of \'etale sheaves of $\qp$-vector spaces on the category of smooth rigid varieties over $F$, obtained by requiring that all the projections maps $(Z \times \mathbf{D}_F)[i] \to Z[i]$ are invertible, for $Z$ any rigid smooth space over $F$ and $i\in \z$, and we state two results which will be useful later on (Theorem \ref{ayoub} and Proposition \ref{2lim} below).  

Let $\mathrm{Rig}_F$ (resp. $\mathrm{Aff}_F$) be the category of rigid spaces over $F$ (resp. affinoid rigid spaces over $F$), and let $\mathrm{Rig}_F^{\dagger}$ (resp. $\mathrm{Aff}_F^{\dagger}$) be the category of overconvergent rigid spaces over $F$ (resp. overconvergent affinoid rigid spaces), in the sense of Große-Klönne \cite{grosse}. The natural functor $\mathrm{Rig}_F^{\dagger} \to \mathrm{Rig}_F$, $Z \mapsto \hat{Z}$ is denoted by $\ell$. Let $\mathrm{RigSm}_F$ be the subcategory of smooth rigid analytic varieties over $F$ and $\mathrm{RigSm}_F^{\dagger}$ be the subcategory of smooth overconvergent rigid varieties over $F$ (an overconvergent rigid space $Z$ is said to be smooth if the associated rigid space $\hat{Z}$ is). 

If $Z$ is affinoid over $F$, a \textit{presentation of an overconvergent structure} on $Z$ is by definition a pro-affinoid variety $\varprojlim Z_h$, where $Z$ and $Z_h$, for all $h\geq 1$, are rational subspaces of $Z_1$ with $Z \Subset Z_h \Subset Z_1$ (for all $h>1$) and such that this system is coinitial among rational subspaces of $Z_1$ strictly containing $Z$. A \textit{morphism} of presentations is a morphism of pro-objects. 

\begin{proposition} \label{a22}
If $Z=\mathrm{Spa}(\hat{R},\hat{R}^{\circ})$ is affinoid over $F$, and $\varprojlim Z_h$ is a presentation of an overconvergent structure on $Z$, then $R:= \varinjlim \O(Z_h)$ is dense in $\hat{R}$ and the functor $\varprojlim Z_h \mapsto \mathrm{Spa}^{\dagger}(R)$ induces an equivalence between the category of presentations of overconvergent structures on affinoid spaces and the category $\mathrm{Aff}_F^{\dagger}$. 
\end{proposition}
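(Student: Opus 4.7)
The plan is to treat the two assertions separately: the density claim first, then the equivalence of categories.

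For the density of $R$ in $\hat R$, I would reduce to a standard property of relatively compact rational subspaces. The condition $Z\Subset Z_h\Subset Z_1$ implies that each restriction map $\O(Z_h)\to\O(Z)=\hat R$ has dense image, since on an affinoid the image of $\O(Z_h)$ in $\O(Z)$ under a relatively compact inclusion of rationals contains all the ``tame'' rational functions that generate the supremum norm topology on $\hat R$. Passing to the colimit $R=\varinjlim \O(Z_h)$, the image of $R$ in $\hat R$ is therefore dense. This is a purely local computation in terms of the norm $\|\cdot\|_Z$ and the fact that the generators of the rational localizations defining $Z$ can be slightly enlarged, i.e.\ realized in some $Z_h$.

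For essential surjectivity of $\varprojlim Z_h\mapsto \Spa^{\dagger}(R)$, I would start from an overconvergent affinoid $\Spa^{\dagger}(R)$ in the sense of Große-Klönne, with completion $\hat R$ defining $Z=\Spa(\hat R,\hat R^{\circ})$. Choosing topological generators $f_1,\dots,f_n$ of $\hat R$ lifted to $R$, one realizes $Z$ as a closed subdomain of a polydisc of radius $1$, and the overconvergent algebra $R$ as the union of the algebras attached to the slightly larger polydiscs of radius $1+1/h$. Intersecting with a small ambient $Z_1$ such that all the defining data of $R$ are already defined there yields a presentation $\varprojlim Z_h$ whose associated colimit is $R$ by construction. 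Essential surjectivity then follows, because two presentations giving rise to the same $R$ can be compared via a cofinal common refinement among rational subspaces strictly containing $Z$.

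For full faithfulness, the cofinality condition is what does the work. A morphism of presentations $\varprojlim Z_h \to \varprojlim Z'_{h'}$ is a compatible family of maps factoring through some $Z_h\to Z'_{h'}$, and taking colimits gives a continuous $F$-algebra map $R'\to R$ fitting into the evident commutative diagram with $\hat R'\to\hat R$; this defines the functor on morphisms. Conversely, given a continuous $F$-algebra map $\varphi:R'\to R$, one chooses generators of $R'$ inside some $\O(Z'_{h'_0})$; their images in $R=\varinjlim\O(Z_h)$ all lie in a common $\O(Z_{h_0})$ by compactness of the generating set, so $\varphi$ is induced by an honest morphism of affinoids $Z_{h_0}\to Z'_{h'_0}$, unique up to refinement of the $Z_h$. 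Passing to the pro-system one checks that this assignment inverts the previous one. The main obstacle here is precisely the verification that $\varphi$ factors through some $Z_{h_0}\to Z'_{h'_0}$: once that finite-generation argument is in place, everything else is a formal check using cofinality of the presenting systems.
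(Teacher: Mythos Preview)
The paper does not actually prove this proposition: its entire proof is the sentence ``See \cite[Prop.\ A.22]{Vezzani}.'' So there is no in-paper argument to compare against; the result is imported from Vezzani's appendix. Your outline is a reasonable reconstruction of the standard argument one finds there (density from the fact that restriction along a strict inclusion of rational subdomains has dense image; essential surjectivity by building a presentation from a choice of topological generators of the dagger algebra; full faithfulness from a finite-generation/cofinality argument). A couple of places in your sketch would need tightening if you actually wrote it up: for essential surjectivity you should be explicit that the $Z_h$ you produce are rational subdomains of a single ambient $Z_1$ and that the system is coinitial among all rational subdomains of $Z_1$ strictly containing $Z$ (this is part of the definition of ``presentation'' used here, and it does not come for free from intersecting with polydiscs of radius $1+1/h$); and for full faithfulness, the step ``their images in $R$ all lie in a common $\O(Z_{h_0})$ by compactness of the generating set'' is really just the fact that finitely many elements of a filtered colimit lie in a common stage, not a compactness argument. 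These are matters of exposition rather than genuine gaps.
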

\begin{proof}
See \cite[Prop. A.22]{Vezzani}.
\end{proof}

In what follows, "presheaf" always means "presheaf of $\qp$-vector spaces". If $Z \in \mathrm{RigSm}_F$ (resp. $\mathrm{RigSm}_F^{\dagger}$), we will denote by $\qp(Z)$ the presheaf $Z' \mapsto \qp \mathrm{Hom}_{\mathrm{RigSm}_F}(Z',Z)$ on $\mathrm{RigSm}_F$ (resp. $Z' \mapsto \qp \mathrm{Hom}_{\mathrm{RigSm}_F^{\dagger}}(Z',Z)$ on $\mathrm{RigSm}_F^{\dagger}$). 

\begin{definition}
Let $\mathcal{S}_{\mathrm{\acute{e}t}}$ be the class of maps $\mathcal{F} \to \mathcal{F}'$ in the category of complexes of presheaves over $\mathrm{RigSm}_F$ or $\mathrm{RigSm}_F^{\dagger}$ inducing isomorphims on the \'etale sheaves attached to the cohomology presheaves of these complexes. Let $\mathcal{S}_{\mathbf{D}}$ (resp. $\mathcal{S}_{\mathbf{D}^{\dagger}}$) be the set of all maps $\qp(\mathbf{D}_F \times Z)[i] \to \qp(Z)[i]$ (resp. maps $\qp(\mathbf{D}_F^{\dagger} \times Z)[i] \to \qp(Z)[i])$, as $Z$ varies in $\mathrm{RigSm}_F$ (resp. $\mathrm{RigSm}_F^{\dagger}$) and $i \in \z$. Let $\mathcal{S}_{\mathrm{\acute{e}t},\mathbf{D}}$ (resp. $\mathcal{S}_{\mathrm{\acute{e}t},\mathbf{D}^{\dagger}}$) be the union of the two classes. The homotopy category of the left Bousfield localization of the projective model category on the category of presheaves on $\mathrm{RigSm}_F$ (resp. $\mathrm{RigSm}_F^{\dagger}$) with respect to the class $\mathcal{S}_{\mathrm{\acute{e}t},\mathbf{D}}$ (resp. $\mathcal{S}_{\mathrm{\acute{e}t},\mathbf{D}^{\dagger}}$) will be denoted $\mathrm{RigMot}_F$ (resp. $\mathrm{RigMot}_F^{\dagger}$) : it is the category of \textit{rigid analytic motives over $F$} (resp. the category of \textit{overconvergent rigid analytic motives over $F$}). 
\end{definition}
If $Z \in \mathrm{RigSm}_F$, we will again denote by $\qp(Z)$ the associated motive in $\mathrm{RigMot}_F$. 
\\

These two categories are actually equivalent (\cite[Th. 4.23]{Vezzani}) :

\begin{theoreme} \label{vezza}
The functors $(L\ell^*,R\ell_*)$ induce quasi-inverse equivalences of triangulated monoidal categories :
\[ \mathrm{RigMot}_F \simeq \mathrm{RigMot}_F^{\dagger}. \]
\end{theoreme}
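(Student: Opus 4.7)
My plan is to realize $(\ell^*, \ell_*)$ as a Quillen adjunction between the projective model categories of complexes of presheaves on $\mathrm{RigSm}_F^\dagger$ and $\mathrm{RigSm}_F$, descend it to the corresponding Bousfield localizations, and then verify that the unit and counit are isomorphisms on a set of generators.

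First I would check that $\ell^*$ sends $\mathcal{S}_{\mathrm{\acute{e}t},\mathbf{D}}$-local equivalences to $\mathcal{S}_{\mathrm{\acute{e}t},\mathbf{D}^\dagger}$-local equivalences. On the \'etale part this uses that \'etale covers of an affinoid $\hat Z$ lift uniquely to \'etale covers of any overconvergent structure $Z^\dagger$ on it (a lifting property of Elkik type), so that the two \'etale sites match up compatibly under $\ell$. On the homotopy part, the key point is the compatibility $\ell(\mathbf{D}_F^\dagger)=\mathbf{D}_F$, which guarantees that $\ell^*$ sends the class $\mathcal{S}_{\mathbf{D}}$ into $\mathcal{S}_{\mathbf{D}^\dagger}$. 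This yields a derived adjunction $(L\ell^*, R\ell_*)$ between $\mathrm{RigMot}_F$ and $\mathrm{RigMot}_F^\dagger$, monoidal because $\ell^*$ is symmetric monoidal on presheaves.

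Next I would reduce checking the unit and counit to a set of generators. The motives $\qp(Z)$ with $Z$ smooth affinoid form a set of compact generators of $\mathrm{RigMot}_F$, and analogously for $\mathrm{RigMot}_F^\dagger$. By the standard local structure of smooth rigid spaces, every smooth affinoid is locally \'etale over a polydisk $\mathbf{D}_F^n$, which carries the tautological overconvergent structure $\mathbf{D}_F^{\dagger,n}$; by the uniqueness of overconvergent lifts of \'etale maps, every smooth affinoid can therefore be covered by smooth affinoids admitting overconvergent structures. \'Etale descent in $\mathrm{RigMot}_F$ and $\mathrm{RigMot}_F^\dagger$ then reduces the problem to these ``liftable'' generators, where one expects $L\ell^*\qp(\hat Z)\simeq \qp(Z^\dagger)$ modulo the homotopy statement below.

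The heart of the argument is the following: if $Z^\dagger$ is a smooth overconvergent affinoid presented as in Proposition \ref{a22} by $\hat Z \Subset Z_h \Subset Z_1$, then each inclusion $\hat Z \hookrightarrow Z_h$ is a $\mathbf{D}_F$-homotopy equivalence in $\mathrm{RigMot}_F$. On a polydisk the inclusion of a smaller polydisk into a larger one is $\mathbf{D}_F$-homotopy equivalent to the identity via radial scaling $(t,x)\mapsto tx$; the general smooth affinoid case is obtained by pulling this back along the \'etale structure map to a polydisk and using \'etale descent. This is the step I expect to be the main obstacle: extending the radial retraction to a well-defined motivic homotopy on an arbitrary strict rational subdomain of an \'etale neighborhood of a polydisk requires a careful argument that the resulting homotopy class is independent of the \'etale presentation. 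Once this homotopy equivalence is established, the natural maps $\qp(\hat Z)\to R\ell_*\qp(Z^\dagger)$ and $L\ell^*\qp(\hat Z)\to \qp(Z^\dagger)$ are isomorphisms on generators, so that $L\ell^*$ and $R\ell_*$ are quasi-inverse equivalences of triangulated monoidal categories.
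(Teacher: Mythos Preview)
The paper does not give a proof here; it simply cites \cite[Th.~4.23]{Vezzani}. Your outline is structurally sound and correctly isolates the crux: for a smooth affinoid $\hat Z$ with presentation $(Z_h)_h$, the inclusions $\hat Z \hookrightarrow Z_h$ must become isomorphisms in $\mathrm{RigMot}_F$.

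Your proposed mechanism for this step has a genuine gap, however. Radial scaling $(t,x)\mapsto tx$ furnishes a homotopy only on polydisks, and it does \emph{not} pull back along an \'etale map $Z\to\mathbf{D}^n$: such a map is not equivariant for scaling, so the scaled image of a point of $Z$ need not lie in $Z$. There is no ``pulled-back homotopy'' to check well-definedness of; the obstacle you anticipate is not one of independence of presentation but of existence. Vezzani's actual argument avoids constructing retractions altogether and works on the other side of the adjunction: he shows that any $\mathbf{D}$-local \'etale complex $\mathcal{F}$ satisfies $\mathcal{F}(Z_h)\simeq\mathcal{F}(\hat Z)$. The input is an approximation/lifting lemma of Monsky--Washnitzer and implicit-function type: maps into $\hat Z$ from a source carrying an \'etale chart to a polydisk can be approximated by maps factoring through some $Z_h$, and two sufficiently close such maps are $\mathbf{D}$-homotopic via a lift along the \'etale chart. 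This is the same circle of ideas as Lemma~\ref{homotopie} in the present paper, and it is what replaces your radial retraction.
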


\begin{definition}
Let $\mathrm{RigMot}_F^{\rm comp}$ be the full triangulated subcategory of $\mathrm{RigMot}_F$ formed by compact objects (those objects $\mathcal{C}$ such that the functor $\mathrm{Hom}_{\mathrm{RigMot}_F}(\mathcal{C},\cdot)$ commutes with small sums).
\end{definition}

We will make a crucial use of the following two results.

\begin{theoreme} \label{ayoub}
The subcategory $\mathrm{RigMot}_F^{\rm comp}$ coincides with the saturated triangulated subcategory of $\mathrm{RigMot}_F$ generated by the motives $\qp(Z)[d]$, where $Z$ runs among proper smooth rigid varieties\footnote{Even : analytifications of smooth projective varieties, but this statement will be enough for us.} and $d$ runs in $\z$.
\end{theoreme}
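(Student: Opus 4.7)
The plan is to follow the now classical strategy, due in the algebraic setting to Voevodsky and adapted to the rigid analytic world by Ayoub and Vezzani. The argument proceeds in three stages.

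\textbf{Step 1: initial compact generators.} By the very construction of $\mathrm{RigMot}_F$ as the homotopy category of a left Bousfield localization of the projective model structure on presheaves on $\mathrm{RigSm}_F$, the representables $\qp(Z)$ for $Z$ smooth and quasi-compact over $F$ are cofibrant and homotopically finitely presented, hence compact in $\mathrm{RigMot}_F$; moreover they form a set of generators. Therefore $\mathrm{RigMot}_F^{\rm comp}$ is the thick triangulated subcategory generated by the $\qp(Z)[d]$ with $Z\in\mathrm{RigSm}_F$ quasi-compact and $d\in\z$. Using a finite admissible affinoid covering and iterating Mayer--Vietoris triangles (which are available because $\mathcal{S}_{\mathrm{\acute{e}t}}$ is inverted), one further reduces to the case where $Z$ is smooth affinoid.

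\textbf{Step 2: compactification of smooth affinoids.} The crucial geometric step is to show that for $Z$ smooth affinoid, $\qp(Z)$ lies in the saturated triangulated subcategory $\mathcal T$ generated by proper smooth motives. Passing through Theorem \ref{vezza}, we work on the overconvergent side: Proposition \ref{a22} lets us write $Z$ as the generic fiber of some smooth overconvergent affinoid $\mathrm{Spa}^{\dagger}(R)$. One then invokes a rigid analytic compactification/desingularization result (e.g.\ Temkin's resolution of singularities for quasi-excellent schemes or, in general, a de Jong-type alteration in the formal/rigid setting) to find, possibly after an étale cover of $Z$, a smooth proper rigid variety $\bar Z$ over $F$ containing $Z$ as the complement of a strict normal crossings divisor $D=\bigcup_{i\in I} D_i$ with smooth proper components. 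Because the étale topology is built into the localization class $\mathcal{S}_{\mathrm{\acute{e}t}}$, an étale hypercovering of $Z$ by such compactifiable pieces suffices to compute $\qp(Z)$ in $\mathrm{RigMot}_F$.

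\textbf{Step 3: induction on dimension.} From $\bar Z$ and the stratification by the closed intersections $D_J:=\bigcap_{i\in J}D_i$, standard Gysin and abstract blow-up triangles in $\mathrm{RigMot}_F$ (which one obtains by combining $\mathbf{D}$-invariance with the étale descent encoded in $\mathcal{S}_{\mathrm{\acute{e}t},\mathbf D}$) yield an exact triangle expressing $\qp(Z)$ as an iterated extension of $\qp(\bar Z)$ and Tate twists/shifts of the $\qp(D_J)$; each of these is smooth proper, and the $D_J$ have strictly smaller dimension than $Z$. An induction on $\dim Z$ (the base case $\dim Z=0$ being immediate since $Z$ is then already a finite disjoint union of points) shows that $\qp(Z)\in\mathcal T$. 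Combined with Step 1 this gives the inclusion $\mathrm{RigMot}_F^{\rm comp}\subseteq\mathcal T$; the reverse inclusion is clear since proper smooth rigid varieties are quasi-compact, hence define compact objects.

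The main obstacle is \textbf{Step 2}: in rigid geometry, a Nagata-type compactification is not directly available, so one genuinely needs rigid analytic desingularization or alterations, together with a careful check that the resulting étale (or proper) hypercoverings can be manipulated inside $\mathrm{RigMot}_F$. Once the compactification is in hand, Step 3 is an entirely formal matter of assembling Gysin and Mayer--Vietoris triangles and running the dimension induction.
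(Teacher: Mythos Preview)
The paper does not give a proof of this statement; it simply records a reference to \cite[Th.~2.5.35]{Ayoub}. Your proposal is therefore not really in competition with a proof in the paper, but rather an attempted sketch of Ayoub's argument. In broad outline the three-step strategy (reduce to smooth affinoids, compactify, run a Gysin/Mayer--Vietoris induction on dimension) is indeed the shape of the argument, and your Steps~1 and~3 are fine.

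The genuine gap is in Step~2. You assert that, after an \'etale cover, a smooth affinoid $Z$ sits as the complement of a strict normal crossings divisor in a smooth \emph{proper rigid} variety $\bar Z$. No such rigid-analytic Nagata/resolution statement is available in this generality, and this is precisely why the footnote in the statement stresses that one may even take $\bar Z$ to be the \emph{analytification of a smooth projective variety}. Ayoub's actual route is not to compactify $Z$ directly in the rigid world, but first to show that the motive of a smooth affinoid lies in the subcategory generated by motives of analytifications of smooth \emph{algebraic} varieties. This uses that a smooth affinoid is, locally for the \'etale topology, \'etale over a polydisc, together with an approximation/homotopy argument (in the spirit of Lemma~\ref{homotopie}) to replace the rigid datum by an algebraic one without changing the motive. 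Once one is on the algebraic side, Hironaka's resolution of singularities over the characteristic~$0$ field $F$ supplies the smooth projective compactification with SNC boundary, and your Step~3 then goes through verbatim for analytifications.

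So your plan is correct in spirit, but the sentence ``invoke a rigid analytic compactification/desingularization result'' hides the real content: the passage from rigid to algebraic motives. If you rewrite Step~2 as ``algebraize up to homotopy, then compactify algebraically'', the sketch becomes an honest outline of Ayoub's proof.
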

\begin{proof}
See \cite[Th. 2.5.35]{Ayoub}.
\end{proof}

\begin{proposition} \label{2lim}
Assume that $C$ is the completion of an algebraic closure of $W(k)[1/p]$. The natural functors induce equivalences :
\[ \mathrm{RigMot}_{C}^{\rm comp} \simeq 2-\varinjlim ~ \mathrm{RigMot}_F^{\rm comp}, \]
where $F$ runs among finite extensions of $W(k)[1/p]$.
\end{proposition}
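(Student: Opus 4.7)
The plan is to verify that the natural base change functor
\[ \Phi\colon 2-\varinjlim_F \mathrm{RigMot}_F^{\mathrm{comp}} \longrightarrow \mathrm{RigMot}_C^{\mathrm{comp}} \]
is an equivalence of triangulated categories. By Theorem~\ref{ayoub}, both the source and the target are saturated triangulated subcategories generated by motives of the form $\qp(Z^{\mathrm{an}})[d]$ with $Z$ smooth projective and $d \in \z$. Since taking the saturated triangulated subcategory generated by a set of compact objects is compatible with filtered $2$-colimits of triangulated categories, it suffices to establish: (a) essential surjectivity on these generators, namely that every smooth projective variety over $C$ has a motive isomorphic to the base change to $C$ of a smooth projective variety defined over some finite extension of $W(k)[1/p]$; and (b) full faithfulness on these generators, namely that for smooth projective $X, Y$ defined over some finite extension $F_0/W(k)[1/p]$ and every $d \in \z$,
\[ \mathrm{Hom}_{\mathrm{RigMot}_C}(\qp(X_C^{\mathrm{an}}), \qp(Y_C^{\mathrm{an}})[d]) = \varinjlim_{F/F_0} \mathrm{Hom}_{\mathrm{RigMot}_F}(\qp(X_F^{\mathrm{an}}), \qp(Y_F^{\mathrm{an}})[d]). \]

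For (b), I would invoke a continuity-type statement for rigid analytic motives analogous to Ayoub's continuity theorem for algebraic motives, and proved similarly: Hom groups in each $\mathrm{RigMot}_F$ are computed via a Bousfield localization of complexes of presheaves on $\mathrm{RigSm}_F$, and both the representable presheaves and the localizing classes $\mathcal{S}_{\mathrm{\acute{e}t}}$ and $\mathcal{S}_{\mathbf{D}}$ behave well under the filtered colimit of coefficient fields $\varinjlim_F F = \overline{W(k)[1/p]}$. Passing from $\overline{W(k)[1/p]}$ to its completion $C$ then uses an insensitivity-to-completion property of rigid analytic motives, reflecting the fact that affinoid algebras are complete by construction.

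For (a), the plan is to spread out and approximate. Given a smooth projective $Z/C$, its defining homogeneous equations have coefficients in $C$ that generate a finitely generated $W(k)[1/p]$-subalgebra; spreading out yields a smooth projective family $\mathcal{Z} \to U$ over a smooth affine scheme $U$ of finite type over $W(k)[1/p]$, with $Z = \mathcal{Z}_s$ for some $s \in U(C)$. By density of $\overline{W(k)[1/p]} \subset C$, one finds $s' \in U(F)$ for some finite extension $F/W(k)[1/p]$ close enough to $s$ that both $s$ and $s'$ lie in the image of a common rigid analytic disk $\mathbf{D}_C \hookrightarrow U_C^{\mathrm{an}}$. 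The fiber $\mathcal{Z}_{s'}/F$ is then a smooth projective variety defined over a finite extension, and becomes the candidate for the descent of $Z$.

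The main obstacle is the last identification, namely that $\qp(\mathcal{Z}_s^{\mathrm{an}}) \simeq \qp((\mathcal{Z}_{s'} \otimes_F C)^{\mathrm{an}})$ holds in $\mathrm{RigMot}_C$. The strategy is a parallel transport argument driven by the $\mathbf{D}$-invariance of the motive category: both fibers arise as pullbacks of the restricted family $\mathcal{Z}|_{\mathbf{D}_C}$ along two sections of the structure map $\mathbf{D}_C \to \mathrm{Spa}(C)$, which is inverted in $\mathrm{RigMot}_C$ by construction of the localization along $\mathcal{S}_{\mathbf{D}}$. Any two such sections are therefore quasi-inverse to this inverted morphism, and so induce the same pullback functor at the motivic level; this forces the two fibers to have isomorphic motives, even though they need not be isomorphic as underlying rigid analytic varieties. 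Making this precise, and extending it from the case of a single disk to the general setting where $U$ may only be reached from $F$-points by a chain of disks, is the technical heart of the proof.
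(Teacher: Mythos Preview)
Your approach differs from the paper's. The paper simply cites Vezzani and, in the remark immediately following the proposition, sketches his argument: cover an arbitrary smooth quasi-compact $Z/C$ by affinoids, descend each affinoid to some finite extension $F$ via Elkik's approximation theorem, approximate the gluing isomorphisms over $C$ by maps defined over $F$, and invoke the explicit $\mathbf{D}$-homotopy of Lemma~\ref{homotopie} (built from $\log(f)$) to show that the approximated maps coincide motivically with the originals, so that the descended pieces glue to a motive over $F$. No reduction to smooth projective generators via Theorem~\ref{ayoub} is used, and no relative motivic machinery is needed.

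Your route via Theorem~\ref{ayoub} and algebraic spreading out is reasonable in outline, but the parallel transport step has a genuine gap. You assert that the two sections $s, s'$ of $\mathbf{D}_C \to \mathrm{Spa}(C)$ ``induce the same pullback functor at the motivic level'' and conclude $\qp(\mathcal{Z}_s^{\mathrm{an}}) \simeq \qp(\mathcal{Z}_{s'}^{\mathrm{an}})$. But in the \emph{absolute} category $\mathrm{RigMot}_C$ there is no such pullback functor: the object $\qp(\mathcal{Z}|_{\mathbf{D}_C})$ is a single motive, not a family over $\mathbf{D}_C$, and the fibre inclusions $\mathcal{Z}_s \hookrightarrow \mathcal{Z}|_{\mathbf{D}_C}$ have no reason to be motivic equivalences when the family is not a product. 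That $\mathbf{D}_C \to \mathrm{Spa}(C)$ is inverted tells you only that $\qp(\mathbf{D}_C) \simeq \qp(\mathrm{pt})$; it says nothing about non-trivial families sitting over $\mathbf{D}_C$. To make your argument work one must pass to the \emph{relative} six-functor formalism: form $g_*\qp$ in $\mathrm{RigMot}(\mathbf{D}_C)$ for the smooth proper $g\colon \mathcal{Z}|_{\mathbf{D}_C} \to \mathbf{D}_C$, use proper base change to identify $s^* g_*\qp$ with the motive of the fibre, and then argue that $s^* \simeq (s')^*$ on this object---which still requires showing $g_*\qp$ lies in the essential image of $f^*$ (a local constancy statement for smooth proper families, not a formal consequence of $\mathbf{D}$-invariance). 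All of this is available in Ayoub's framework, but it is considerably heavier than what you invoke, and is exactly what Vezzani's elementary homotopy argument sidesteps.
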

\begin{proof}
See \cite[Lem. 5.21, Lem. 5.23]{Vezzani2}.
\end{proof}

\begin{remarque}
The analogous statement at the level of quasi-compact smooth rigid spaces is completely false. This shows the interest of working with motives.

Let us briefly give the rough idea behind the proof of this statement. Let $Z$ be a quasi-compact smooth rigid space over $C$. One can easily find a finite cover of $Z$ by affinoid spaces which descend to a finite extension $F$ of $W(k)[1/p]$ (using, for example, Elkik's approximation theorem \cite{elkik}). Then one can approximate the gluing isomorphisms over $C$ by morphisms defined over $F$ (up to enlarging $F$) ; by Lemma \ref{homotopie} below, these morphisms are homotopic to the original ones and thus define the same morphisms in the category of motives. Therefore, the motives of the models over $F$ of our affinoids "glue" to a motive over $F$.

We used the following simple
\begin{lemme} \label{homotopie}
Let $A$ be an affinoid algebra, with its norm $\| \cdot \|$, and $f : A\to A$ an automorphism satisfying $\| f - \mathrm{Id} \| < p^{-1/(p-1)}$. There exists a morphism $H : A \to A \langle T \rangle$ such that $\iota_0 \circ H= \mathrm{Id}$ and $\iota_1 \circ H=f$, $\iota_0$ (resp. $\iota_1$) being the evaluation at $0$ (resp. $1$). 
\end{lemme}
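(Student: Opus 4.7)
The plan is to construct $H$ as the $p$-adic ``flow'' of the derivation $D := \log(f)$, thinking of $H$ as the formal one-parameter family $f^T = \exp(TD)$, specialised at the formal parameter $T$ rather than at an integer.

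First, I would define $D \colon A \to A$ by the usual logarithm series
$$ D(a) := \sum_{n \geq 1} \frac{(-1)^{n-1}}{n}(f - \mathrm{Id})^n(a). $$
The hypothesis $\|f - \mathrm{Id}\| < p^{-1/(p-1)}$ is exactly what is needed for this to converge as a continuous $\qp$-linear endomorphism of $A$: the $n$-th term has operator norm $\|f - \mathrm{Id}\|^n/|n|_p$, and the critical case $n = p$ contributes $p \|f - \mathrm{Id}\|^p$, which is strictly less than $\|f - \mathrm{Id}\|$ under the hypothesis. One obtains $\|D\| = \|f - \mathrm{Id}\| < p^{-1/(p-1)}$. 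Next, I would check that $D$ is a derivation. Using the companion exponential series $\exp(D) = \sum_{n\geq 0} D^n/n!$, which converges by an analogous estimate (now with $|n!|_p^{-1} \leq p^{(n-1)/(p-1)}$), the standard formal identity gives $\exp(D) = f$. Expanding $f(ab) = f(a)f(b)$ and extracting the linear-in-$D$ term then forces the Leibniz rule $D(ab) = D(a)b + a D(b)$.

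With $D$ in hand, define
$$ H(a) := \sum_{n \geq 0} \frac{T^n}{n!} D^n(a). $$
The coefficients tend to $0$ in $A$ by the same $|n!|_p$ estimate combined with the strict bound on $\|D\|$, so $H(a) \in A \langle T \rangle$. That $H$ is a $\qp$-algebra homomorphism is a formal consequence of $D$ being a derivation via the iterated Leibniz rule $D^n(ab) = \sum_{k=0}^n \binom{n}{k} D^k(a) D^{n-k}(b)$; dividing by $n!$ and summing in $T$ precisely encodes $H(ab) = H(a) H(b)$. The specialisations $\iota_0 \circ H = \mathrm{Id}$ and $\iota_1 \circ H = \exp(D) = f$ are then immediate.

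There is no genuine obstacle beyond the convergence bookkeeping: the constant $p^{-1/(p-1)}$ is the shared radius of convergence of $\log$ and $\exp$, and the strict form of the hypothesis makes every series at play converge on the nose. Conceptually, the argument is the $p$-adic analogue of the classical fact that, in a Lie group, an automorphism sufficiently close to the identity is the time-one map of a unique small vector field.
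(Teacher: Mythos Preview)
Your proof is correct and follows exactly the paper's approach: define $D=\log(f)$, observe it is a derivation with $\|D\|<p^{-1/(p-1)}$, and set $H(a)=\sum_n \frac{D^n(a)}{n!}T^n$. The paper (citing van der Put) states the derivation property without justification, so your sketch of why $D$ satisfies Leibniz is already more than it gives; to make that step airtight you would introduce the parameter $t$ and use that $\exp(tD)=f^t$ is multiplicative for $t\in\z_{\geq 0}$, hence for all $t\in\zp$ by density, then differentiate at $t=0$.
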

\begin{proof}
(D'apr\`es \cite{vdp}.) Let 
\[ D = \log(f) = \sum_n (-1)^n \frac{(\mathrm{Id}-f)^{n+1}}{n+1}. \]
This is a derivation of $A$ which satisfies $\| D \| < p^{-1/(p-1)}$. It suffices to set, for $a \in A$ :
\[ H(a) = \sum_n \frac{D^n(a)}{n!} T^n \]
to get the sought-after homotopy. 
\end{proof}
\end{remarque}

\textit{Notation.} From now on, we will simply write $\mathrm{RigMot}$ (resp. $\mathrm{RigMot}^{\dagger}$) for the category $\mathrm{RigMot}_C$ (resp. $\mathrm{RigMot}_C^{\dagger}$).

\section{Pro-\'etale period sheaves and d\'ecalage functors} \label{Period sheaves}

In this section, we recall or prove some results about period sheaves and d\'ecalage functors, which will be used in the next section.

\begin{definition}
Let $f : Z' \to Z$ be a morphism of analytic adic spaces over $C$. The morphism $f$ is said to be \textit{affinoid pro-\'etale} if $Z'=\mathrm{Spa}(S,S^+)$, $Z=\mathrm{Spa}(R,R^+)$ are affinoid and if $Z' = \varprojlim Z_i \to Z$ can be written as cofiltrant projective limit of \'etale morphisms $Z_i \to Z$, with $Z_i=\mathrm{Spa}(S_i,S_i^+)$ affinoid. The morphism is said to be \textit{pro-\'etale} if it is affinoid pro-\'etale locally on source and target.

Let $Z$ be an analytic adic space over $C$. The \textit{(small) pro-\'etale site} of $Z$ is the Grothendieck topology on the category of $f : Z' \to Z$ pro-\'etale, with $Z'$ perfectoid over $C$, for which a family of morphisms $\{ f_i : Z'_i \to Z', i\in I \}$ is a covering if each $f_i$ is pro-\'etale and if or each quasi-compact open $U \subset Z'$, there exist a finite subset $J\subset I$ and for each $i\in J$ a quasi-compact open $U_i \subset Z'_i$, such that $U=\cup_{i\in J} f_i(U_i)$. 
\end{definition}

Let $Z$ be a rigid space over $C$. There is no morphism of sites $Z_{\mathrm{pro\acute{e}t}} \to Z_{\mathrm{\acute{e}t}}$\footnote{There is such a morphism if one uses instead the old pro-\'etale site of \cite{SHodge} but the definition of the pro-\'etale site we take is different.}, but there is still a morphism $(\nu'_*,\nu^{' -1})$ form the pro-\'etale topos of $Z$ to the \'etale topos of $Z$. 

\begin{definition}
Let $Z$ be a rigid space over $C$. We define the following pro-\'etale sheaves on $Z$. 
\begin{itemize}
\item The completed integral structure sheaf $\widehat{\O}_Z^+ = \varprojlim_r \O_Z^+/p^r$ and its tilted analogue $\widehat{\O}_{Z^{\flat}}^+ = \varprojlim_{\varphi} \O_Z^+/p$.
\item The sheaf $\mathbb{A}_{\rm inf}= W(\widehat{\O}_{Z^{\flat}}^+)$. One has a morphism of sheaves $\theta : \mathbb{A}_{\rm inf} \to \widehat{\O}_Z^+$, which extends to $\theta : \mathbb{A}_{\rm inf}[1/p] \to \widehat{\O}_Z$.  
\item The sheaf $\mathbb{B}_{\rm dR}^+ = \varprojlim_k \mathbb{A}_{\rm inf}[1/p]/(\ker(\theta))^k$. It is endowed with a filtration defined by $\mathrm{Fil}^i \mathbb{B}_{\rm dR}^+ = \ker(\theta)^i$. 
\item Let $t$ be a generator of $\mathrm{Fil}^1 \mathbb{B}_{\rm dR}^+$ (such an element exists locally for the pro-\'etale topology, is unique up to a unit and is not a zero divisor). Let $\mathbb{B}_{\rm dR} = \mathbb{B}_{\rm dR}^+[1/t]$ and $\mathrm{Fil}^i \mathbb{B}_{\rm dR} = \sum_{j\in \z} t^{-j} \mathrm{Fil}^{i+j} \mathbb{B}_{\rm dR}^+$. 
\end{itemize}
\end{definition}  

\begin{definition} \label{aibi}
Let $Z$ be an adic space over $C$. Let $I =[a,b] \subset ]0,1[$ be a compact subinterval, with $a, b\in p^{\q}$. If $\alpha, \beta \in \O_{C^{\flat}}$ are such that $|\alpha|=a$, $|\beta|=b$, one defines :
\[ \mathbb{A}_I = \underset{n}\varprojlim ~ (\mathbb{A}_{\rm inf}[[\alpha]/p,p/[\beta]])/p^n  \quad ; \quad \mathbb{B}_I=\mathbb{A}_I[1/p]. \]
Let :
\[ \mathbb{B} = \underset{I \subset ]0,1[}\varprojlim ~ \mathbb{B}_I. \]
\end{definition}

The pro-\'etale cohomology of these period sheaves can be simply described in the proper case.

\begin{proposition} \label{proprepro\'et}
Let $Z$ be a smooth proper rigid space over $C$. Let
\[ \mathbb{M} \in \{ \mathbb{B}_{\rm dR}^+, \mathbb{B}_{\rm dR}, \mathbb{B}_I, \mathbb{B}, \mathbb{B}_I[1/t], \mathbb{B}[1/t] \},  \]
and accordingly
\[ M \in \{ B_{\rm dR}^+, B_{\rm dR}, B_I, B, B_I[1/t], B[1/t] \}. \]
One has a quasi-isomorphism
\[  R\Gamma_{\mathrm{\acute{e}t}}(Z, \qp) \otimes_{\qp} M \simeq R\Gamma_{\mathrm{pro\acute{e}t}}(Z, \mathbb{M}). \]
Moreover, the cohomology groups $H_{\mathrm{\acute{e}t}}^i(Z,\qp)$ are finite dimensional $\qp$-vector spaces, for all $i\geq 0$. 
\end{proposition}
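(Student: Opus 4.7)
My plan is to reduce everything to Scholze's primitive comparison theorem, and then bootstrap to the various period sheaves $\mathbb{M}$ by filtration and inverse-limit arguments. First I would invoke the primitive comparison theorem: for $Z$ smooth proper over $C$, there are almost isomorphisms
\[ R\Gamma_{\mathrm{\acute{e}t}}(Z,\zp/p^n) \otimes_{\zp/p^n}^L \O_C/p^n \simeq R\Gamma_{\mathrm{pro\acute{e}t}}(Z, \widehat{\O}_Z^+/p^n), \]
and the groups $H^i_{\mathrm{\acute{e}t}}(Z,\zp/p^n)$ are finite. Passing to the derived limit in $n$ and inverting $p$ establishes both the finite-dimensionality of $H^i_{\mathrm{\acute{e}t}}(Z,\qp)$ and the asserted isomorphism in the case $\mathbb{M} = \widehat{\O}_Z$, i.e., $\mathbb{B}_{\rm dR}^+/\mathrm{Fil}^1$.

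Second, to handle $\mathbb{M}=\mathbb{B}_{\rm dR}^+$, I would use the filtration by powers of $\ker(\theta)$. The graded pieces $\mathrm{gr}^k\mathbb{B}_{\rm dR}^+$ are isomorphic to $\widehat{\O}_Z$ (the Tate twists that appear are trivialized over $C$ by powers of $t$). An induction on $k$ using the short exact sequences
\[ 0 \to \mathrm{gr}^k\mathbb{B}_{\rm dR}^+ \to \mathbb{B}_{\rm dR}^+/\mathrm{Fil}^{k+1} \to \mathbb{B}_{\rm dR}^+/\mathrm{Fil}^k \to 0 \]
propagates the isomorphism to every quotient $\mathbb{B}_{\rm dR}^+/\mathrm{Fil}^k$. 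Taking the derived inverse limit, which is harmless by finite dimensionality at each finite stage, recovers the statement for $\mathbb{B}_{\rm dR}^+$. Inverting $t$ then yields $\mathbb{B}_{\rm dR}$ and, once the $\mathbb{B}_I$ case is handled, the $[1/t]$ variants.

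Third, for $\mathbb{B}_I$ with $I=[a,b] \subset\, ]0,1[$: the sheaf $\mathbb{A}_I$ is built from $\mathbb{A}_{\rm inf}$ by localizing at $[\alpha]/p$ and $p/[\beta]$ and $p$-adically completing. I would first upgrade the primitive comparison theorem to the $\mathbb{A}_{\rm inf}$ level, along the lines of Bhatt-Morrow-Scholze's integral $p$-adic Hodge theory: for smooth proper $Z$ over $C$, the complex $R\Gamma_{\mathrm{pro\acute{e}t}}(Z,\mathbb{A}_{\rm inf})$ is a perfect complex of $A_{\rm inf}$-modules, and after suitable base change recovers $R\Gamma_{\mathrm{\acute{e}t}}(Z,\zp) \otimes_{\zp} A_{\rm inf}$. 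Perfectness then guarantees that the localization and $p$-adic completion used to define $\mathbb{A}_I$ commute with taking cohomology, whence the statement for $\mathbb{B}_I$ after inverting $p$. Finally, $\mathbb{B} = \varprojlim_I \mathbb{B}_I$ is handled by taking the inverse limit over compact subintervals: the cohomology of each $\mathbb{B}_I$ is a finite free $B_I$-module tensored with étale cohomology, and $B$ is the Fréchet limit of the $B_I$, so the limit poses no difficulty.

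The main obstacle is the $\mathbb{B}_I$ case, where one has to verify that the localization at $[\alpha]/p, p/[\beta]$ and the $p$-adic completion used in passing from $\mathbb{A}_{\rm inf}$ to $\mathbb{A}_I$ commute with pro-étale cohomology. This is not formal and ultimately rests on the perfectness of $R\Gamma_{\mathrm{pro\acute{e}t}}(Z,\mathbb{A}_{\rm inf})$ as a complex of $A_{\rm inf}$-modules, which is itself a deep refinement of the primitive comparison theorem made available by smoothness and properness of $Z$.
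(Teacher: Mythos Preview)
Your strategy for $\mathbb{B}_{\rm dR}^+$ (filter by powers of $\ker\theta$, use the primitive comparison on each graded piece $\widehat{\O}_Z$, then take the derived limit) and for deducing $\mathbb{B}$ from $\mathbb{B}_I$ by an inverse limit is exactly what the paper does, and the finite-dimensionality of $H^i_{\mathrm{\acute et}}(Z,\qp)$ is handled the same way.

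The $\mathbb{B}_I$ step, however, contains a real error. You assert that $R\Gamma_{\mathrm{pro\acute et}}(Z,\mathbb{A}_{\rm inf})$ is a perfect complex of $A_{\rm inf}$-modules. This is false. What Bhatt--Morrow--Scholze prove to be perfect is the $L\eta_\mu$-modified complex $R\Gamma(\mathfrak Z, L\eta_\mu R\nu_*\mathbb{A}_{\rm inf})$, and that construction requires a smooth proper \emph{formal model} $\mathfrak Z$ over $\O_C$, which a general smooth proper rigid $Z$ over $C$ need not have. The unmodified pro-\'etale $\mathbb{A}_{\rm inf}$-cohomology differs from this by a ``junk'' $\mu$-torsion term that is not finitely generated over $A_{\rm inf}$; since $\mu$ (equivalently $t$) is in general not a unit in $B_I$, you cannot simply invert it away, and your commutation of localization and $p$-adic completion with cohomology is not justified.

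The paper bypasses this entirely with a more elementary reduction: it works modulo $(p^m,[x])$ for $x\in\O_{C^\flat}$ with $0<|x|<1$, where the primitive comparison theorem (as in the proof of \cite[Th.~8.4]{SHodge}) gives directly the almost isomorphism
\[
R\Gamma_{\mathrm{\acute et}}(Z,\zp/p^m)\otimes_{\zp} A_{\rm inf}/([x])\ \simeq^a\ R\Gamma_{\mathrm{pro\acute et}}(Z,\mathbb{A}_{\rm inf}/(p^m,[x])).
\]
Since $\mathbb{A}_I$ is the $p$-adic completion of $\mathbb{A}_{\rm inf}[[\alpha]/p,p/[\beta]]$, these finite-level statements are precisely what one needs; one then passes to the limit in $m$ and inverts $p$. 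No perfectness over $A_{\rm inf}$ is ever invoked.
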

\begin{proof}
As $Z$ is proper, thus quasi-compact, it is enough to prove the statement when $\mathbb{M} \in \{ \mathbb{B}_{\rm dR}^+, \mathbb{B}_I, \mathbb{B} \}$. Using Lemma \ref{replete} below, the statement for $\mathbb{M}=\mathbb{B}$ is easily deduced from the statement for $\mathbb{B}_I$, for all compact subintervals $I \subset ]0,1[$. The sheaf $\mathbb{B}_I$ is obtained by $p$-adically completing $\mathbb{A}_{\rm inf}[[\alpha]/p,p/[\beta]]$, for $\alpha, \beta$ as in Definition \ref{aibi}, and inverting $p$. So it is enough to prove that for any $x \in \O_{C^{\flat}}$, $0< |x|<1$, and any $m>0$, 
\[  R\Gamma_{\mathrm{\acute{e}t}}(Z, \zp/p^m) \otimes_{\zp} A_{\rm inf}/([x]) \simeq^a R\Gamma_{\mathrm{pro\acute{e}t}}(Z, \mathbb{A}_{\rm inf}/(p^m,[x])). \]
This is a corollary of the proof of \cite[Th. 8.4]{SHodge}. The case $\mathbb{M}=\mathbb{B}_{\rm dR}^+$ is also treated in \cite[Th. 8.4]{SHodge}. Finaly, the last statement is \cite[Th. 3.1]{ScholzeCDM}.
\end{proof}

\begin{lemme} \label{replete}
Let $Z$ be a smooth proper rigid space over $C$. For any $i >0$, one has
\[ R^i  \underset{I} \varprojlim ~ \mathbb{B}_I =0, \]
when $I$ runs among compact subintervals of $]0,1[$, with extremities in $p^{\q}$.
\end{lemme}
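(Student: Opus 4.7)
Plan: Since the poset of compact subintervals $I \subset \,]0,1[\,$ with endpoints in $p^{\q}$ admits a cofinal chain $I_1 \subset I_2 \subset \cdots$ exhausting $\,]0,1[\,$, it is enough to show $R^i \varprojlim_n \mathbb{B}_{I_n} = 0$ for $i > 0$. In any topos one has the Milnor distinguished triangle
\[ R\varprojlim_n \mathbb{B}_{I_n} \longrightarrow \prod_n \mathbb{B}_{I_n} \stackrel{1-s}{\longrightarrow} \prod_n \mathbb{B}_{I_n} \longrightarrow R\varprojlim_n \mathbb{B}_{I_n}[1], \]
where $s$ is the shift (the triangle is valid because products of sheaves are exact). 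It automatically yields $R^i\varprojlim_n \mathbb{B}_{I_n} = 0$ for $i \geq 2$ and identifies $R^1\varprojlim_n \mathbb{B}_{I_n}$ with the sheaf cokernel of $1 - s$. So what remains is to check that this cokernel vanishes on a basis of the pro-\'etale site.

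I would test on affinoid perfectoids $U = \mathrm{Spa}(R, R^+)$, which form such a basis. On $U$ the sections $\mathbb{B}_{I_n}(U)$ form a Banach $\qp$-algebra, namely the $p$-adic completion of the $W(R^{+\flat})$-subalgebra generated by $[\alpha_n]/p$ and $p/[\beta_n]$, with $p$ inverted. The transition maps $\mathbb{B}_{I_{n+1}}(U) \to \mathbb{B}_{I_n}(U)$ are continuous with dense image. Indeed, by construction $W(R^{+\flat})[1/p, 1/[\beta_n]]$ is $p$-adically dense in $\mathbb{B}_{I_n}(U)$, and it already sits in the image of the transition: since $I_n \subset I_{n+1}$ forces $|\alpha_{n+1}| \leq |\alpha_n|$ and $|\beta_{n+1}| \geq |\beta_n|$, the ratios $\alpha_{n+1}/\alpha_n$ and $\beta_n/\beta_{n+1}$ lie in $\O_{C^{\flat}}$, and one has
\[ [\alpha_{n+1}]/p = [\alpha_{n+1}/\alpha_n] \cdot ([\alpha_n]/p), \qquad p/[\beta_{n+1}] = [\beta_n/\beta_{n+1}] \cdot (p/[\beta_n]), \]
so the generators of $\mathbb{B}_{I_{n+1}}(U)$ are sent into $W(R^{+\flat})[[\alpha_n]/p, p/[\beta_n]][1/p] \subset W(R^{+\flat})[1/p, 1/[\beta_n]]$.

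With the density established, the classical Grothendieck--Palamodov topological Mittag-Leffler criterion (any countable inverse system of Banach, or more generally Fr\'echet, spaces whose transition maps have dense image has vanishing $R^1\varprojlim$ as abelian groups) yields $R^1\varprojlim_n \mathbb{B}_{I_n}(U) = 0$ for every affinoid perfectoid $U$. Hence the sheaf cokernel of $1-s$ vanishes on a basis of the pro-\'etale topos, so it vanishes, which by the Milnor description gives $R^1\varprojlim_n \mathbb{B}_{I_n} = 0$. The main obstacle is the elementary but slightly finicky density check for the transition maps on the basis; once that is in hand, the topological Mittag-Leffler and the Milnor triangle close the argument, and propriety of $Z$ is not used.
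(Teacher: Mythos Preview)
Your approach is essentially the same as the paper's: both verify a topological Mittag-Leffler condition on the basis of affinoid perfectoids, using that the transition maps $\mathbb{B}_J(U) \to \mathbb{B}_I(U)$ are continuous Banach-space maps with dense image. The paper simply cites \cite[Lem.~3.18]{SHodge} as a packaged statement (whose hypotheses are exactly (i) dense-image transitions on the basis and (ii) acyclicity of each $\mathbb{B}_I$ on the basis), while you unpack it by hand via the Milnor triangle and the Grothendieck--Palamodov criterion. Your observation that properness of $Z$ plays no role is correct; the paper's proof does not use it either.

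Two small points to tighten. First, the validity of the Milnor two-term resolution in the derived category of sheaves needs countable products to be exact; this holds here because the pro-\'etale topos is \emph{replete} in the sense of Bhatt--Scholze (which is presumably why the lemma is labelled \texttt{replete}), and you should say so rather than asserting it for an arbitrary topos. Second, your displayed identities
\[ [\alpha_{n+1}]/p = [\alpha_{n+1}/\alpha_n]\cdot([\alpha_n]/p), \qquad p/[\beta_{n+1}] = [\beta_n/\beta_{n+1}]\cdot(p/[\beta_n]) \]
only show that the transition map is well-defined, not that a dense subring of the target lies in its image. The clean fix is to note that for any two $\gamma,\gamma'\in \O_{C^{\flat}}$ with $0<|\gamma|,|\gamma'|<1$ one has $W(R^{+\flat})[1/p,1/[\gamma]]=W(R^{+\flat})[1/p,1/[\gamma']]$ (choose $k$ with $|\gamma'^{\,k}|\leq|\gamma|$, so $[\gamma^{-1}]=[\gamma^{-1}\gamma'^{\,k}]\cdot[\gamma']^{-k}$ with $\gamma^{-1}\gamma'^{\,k}\in\O_{C^{\flat}}$); hence the common dense subring $W(R^{+\flat})[1/p,1/[p^{\flat}]]$ of all $\mathbb{B}_{I_n}(U)$ lies in the image of every transition map, giving density. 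With these adjustments your argument is complete.
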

\begin{proof}
We apply \cite[Lem. 3.18]{SHodge}, by taking as a basis the set of all affinoid perfectoid spaces pro-\'etale over $Z$. Condition (ii) of \textit{loc. cit.} follows by acyclicity of $\mathbb{B}_I$ on affinoid perfectoid spaces. Condition (i) comes from the fact that for any $U \in Z_{\mathrm{pro\acute{e}t}}$ affinoid perfectoid, and any $I \subset J$, the morphism $\mathbb{B}_J(U) \to \mathbb{B}_I(U)$ is continuous morphism of Banach spaces with dense image.  
\end{proof}

In general, if $Z$ is a smooth quasi-compact space, these cohomology groups do not have good finiteness properties. Two issues are to be overcome. The first one is classical : as for de Rham cohomology, one needs to make everything overconverge. But this is still not enough : for example, the \og overconvergent pro-\'etale cohomology \fg{} of $\mathbb{B}_{\rm dR}^+$ on the closed unit disk over $C$ would be isomorphic to $C \langle T \rangle^{\dagger}$, viewed as a $B_{\rm dR}^+$-module via Fontaine's map $\theta : B_{\rm dR}^+ \to C$. This $B_{\rm dR}^+$-module has torsion and is not of finite type ! Bhatt, Morrow and Scholze \cite{BMS} understood how to modify these period sheaves to remedy this problem, using a simple but magical operation discovered by Deligne and already used by Berthelot and Ogus. We will briefly recall its definition and some of its properties.
\\

Let $A$ be a ring and $f\in A$, which is not a zero divisor. 

\begin{definition}
If $K^{\bullet}$ is a complex of $A$-modules such that $K^i$ is $f$-torsion torsion free for all $i$, one defines a new complex $\eta_f K^{\bullet}$ by
\[ (\eta_{f} K^{\bullet})^j = \{ x \in f^{j} K^j, dx \in f^{j+1} K^{j+1} \}. \]
The functor $\eta_f$ sends quasi-isomorphisms to quasi-isomorphisms ; its extension\footnote{Here one uses the fact that any complex of $A$-modules is quasi-isomorphic to a complex of $f$-torsion free $A$-modules.}  to the derived category is called $L\eta_f$.
\end{definition}

The functor $L\eta_f$ kills $f$-torsion in cohomology. It of course does nothing when $f$ is invertible in $A$.

The action of the d\'ecalage functor on some Koszul complexes is simple to describe. This is actually an important point, as many proofs rely on a local study where pro-\'etale cohomology groups can be written as Koszul complexes.
\begin{definition}
Let $M$ be an abelian group and $h_1,\dots, h_d$ be commuting endomorphisms of $M$. Let $K_M(h_1,\dots,h_d)$ be the Koszul complex :
\[ M \to \bigoplus_{1 \leq i \leq d} M \to \bigoplus_{1 \leq i_1<i_2 \leq d} M \to \dots \to \bigoplus_{1\leq i_1<\dots < i_k \leq d} M \to \dots \]
where the differential on $M$ in position $i_1< \dots < i_k$ to $M$ in position $j_1<\dots, < j_{k+1}$ is non zero only if $\{i_1,\dots,i_k \} \subset \{j_1, \dots,j_{k+1}\}$ and equals $(-1)^{m-1} h_m$ in this case, $m$ being the unique index between $1$ and $k+1$ such that $j_m \notin \{i_1,\dots,i_k\}$. 
\end{definition}

\begin{proposition} \label{koszul}
Let $A$ be a ring, $g_1,\dots,g_n \in A$ and $f\in A$ which are not zero divisors. Let $M^{\bullet}$ be a complex of $f$-torsion-free $A$-modules.

$\mathrm{(i)}$ If $f$ divides all the $g_i$,
\[ \eta_f (M^{\bullet} \otimes_A K_A(g_1,\dots,g_n)) = \eta_f M^{\bullet} \otimes_A K_A(g_1/f,\dots,g_n/f). \]

$\mathrm{(ii)}$ If there exists $i$ such that $g_i$ divides $f$, $\eta_f (M^{\bullet} \otimes_A K_A(g_1,\dots,g_n))$ is acyclic.
\end{proposition}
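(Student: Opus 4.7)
Both parts should be approached by reducing to the one-variable case $n = 1$, exploiting the tensor factorisation $K_A(g_1, \ldots, g_n) = K_A(g_1) \otimes_A \cdots \otimes_A K_A(g_n)$. For (i), I would induct on $n$, stripping off factors one at a time. For (ii), I would fold all $K_A(g_i)$ with $i \neq 1$ into a larger complex $\tilde M^{\bullet} := M^{\bullet} \otimes_A K_A(g_2, \ldots, g_n)$, whose terms are finite direct sums of the $M^p$'s and therefore still $f$-torsion-free. This reduces to the case of a single generator $g$, with the concrete model $M^{\bullet} \otimes K_A(g)$ having degree-$j$ part $M^j \oplus M^{j-1}$ and differential $(x, y) \mapsto (dx, (-1)^j g x + dy)$.

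For (i), write $g = fh$ and define a morphism of complexes
\[ \phi \colon \eta_f M^{\bullet} \otimes K_A(h) \longrightarrow \eta_f(M^{\bullet} \otimes K_A(g)), \qquad (x, y) \longmapsto (x, fy), \]
and check it is an isomorphism. Well-definedness hinges on the rewriting $(-1)^j g x + d(fy) = f \bigl((-1)^j h x + dy\bigr)$, where both $h x$ and $dy$ sit in $f^j M^j$, so the output lies in $f^{j+1} M^j$ as required by $\eta_f$; compatibility with differentials is the same computation. Injectivity is immediate from $f$-torsion-freeness, and surjectivity is the observation that any element of $f^j M^{j-1}$ writes uniquely as $fy$ with $y \in f^{j-1} M^{j-1}$, after which the $\eta_f$-condition on the target pulls back to the one on the source.

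For (ii), assume $g_1 \mid f$, write $f = g_1 h$, and denote the variable simply by $g := g_1$. Since $M^{\bullet}$ is $f$-torsion-free and $g \mid f$, multiplication by $g$ is injective, so division by $g$ makes sense on elements of $gM$. The plan is to construct an explicit contracting homotopy
\[ s \colon \eta_f(M^{\bullet} \otimes K_A(g))^j \longrightarrow \eta_f(M^{\bullet} \otimes K_A(g))^{j-1}, \qquad (x, y) \longmapsto \bigl((-1)^{j-1} g^{-1} y, 0\bigr), \]
and verify $ds + sd = \mathrm{id}$. This is the standard contracting homotopy for $K_A(g)$ when $g$ is invertible, so the only real work is to check that it restricts to the $\eta_f$-subcomplex. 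That $g^{-1} y$ lies in $f^{j-1} M^{j-1}$ follows from $y \in f^j M^{j-1} = g^j h^j M^{j-1}$; for the differential, one uses $(-1)^j g x + dy \in f^{j+1} M^j$ to write $g^{-1} dy = -(-1)^j x + g^{-1} r$ with $r \in f^{j+1} M^j$, and then $g^{-1} r \in (f^{j+1}/g)\, M^j = h f^j M^j \subseteq f^j M^j$.

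The main obstacle is precisely this last piece of book-keeping in (ii): the arithmetic identity $f^{j+1}/g = h f^j$ is what makes the classical contracting homotopy survive inside $\eta_f$. Nothing deep is required, but the interplay between the $\eta_f$-divisibility conditions on $(x, y)$ and the need to divide by $g$ must be tracked carefully; the tensor-product reduction, the identification in (i), and the final $ds + sd = \mathrm{id}$ computation are then mechanical.
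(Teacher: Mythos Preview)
Your argument is correct. The paper itself gives no proof here, only the citation ``See \cite[Lem.~7.9]{BMS}'', and the argument you have written out is precisely the one found there: reduce to $n=1$ via the tensor factorisation of Koszul complexes, give the explicit isomorphism $(x,y)\mapsto(x,fy)$ for part~(i), and the explicit contracting homotopy $(x,y)\mapsto((-1)^{j-1}g^{-1}y,0)$ for part~(ii).
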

\begin{proof}
See \cite[Lem. 7.9]{BMS}.
\end{proof}

One can actually define d\'ecalage functors in greater generality.
\begin{definition}
Let $\delta : \z \to \mathbf{N}$ be a non decreasing function. If $K^{\bullet}$ is a complex of $f$-torsion free $A$-modules, let $\eta_{\delta,f} K^{\bullet}$ be the complex defined by
\[ (\eta_{\delta,f} K^{\bullet})^i = \{ x \in f^{\delta(i)} K^i, dx \in f^{\delta(i+1)} K^{i+1} \}. \]
The functor $\eta_{\delta,f}$ preserves quasi-isomorphisms (\cite[Prop. 8.19]{BO}) and extends to a functor $L\eta_{\delta,f}$ on the derived category of $A$-modules.
\end{definition}

Another useful fact is the following

\begin{lemme} \label{induction}
Let $A$ be a ring, $f \in A$ a non zero divisor, and $K^{\bullet}$ a bounded complex of $A$-modules, such that $K^j$ is $f$-torsion free for all $j$. Let, for $i\geq 0$, $\delta_i$ be the function defined by 
\[ \delta_i(j) = \max(0,j-i). \]
Note that $K_{\delta_0,f}^{\bullet} = \eta_f K^{\bullet}$ and that $K_{\delta_i,f}^{\bullet}=K^{\bullet}$ for any $i$ big enough. Then one has, for all $i \geq 0$, a triangle in $D(A)$
\[ K_{\delta_{i},f}^{\bullet} \longrightarrow K_{\delta_{i+1},f}^{\bullet} \longrightarrow \tau^{\geq i+1} (K^{\bullet}/f) \overset{+1} \longrightarrow. \] 
\end{lemme}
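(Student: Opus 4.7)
My plan is to identify the cofiber of the natural inclusion $K_{\delta_i,f}^{\bullet}\hookrightarrow K_{\delta_{i+1},f}^{\bullet}$ with $\tau^{\geq i+1}(K^{\bullet}/f)$ in $D(A)$. Since $\delta_{i+1}(j)\leq\delta_i(j)$ for every $j$, the inclusion is termwise, so the cofiber is represented by the honest quotient complex $Q^{\bullet}:=K_{\delta_{i+1},f}^{\bullet}/K_{\delta_i,f}^{\bullet}$.

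The first step is a termwise analysis of $Q^{\bullet}$. One has $\delta_i(j)-\delta_{i+1}(j)=0$ for $j\leq i$ and $=1$ for $j\geq i+1$, and the $f$-torsion-freeness of $K^j$ yields the identity $K_{\delta_i,f}^j=f\cdot K_{\delta_{i+1},f}^j$ in degrees $j\geq i+1$ (if $y\in K_{\delta_i,f}^j\subseteq f^{j-i}K^j$ one writes uniquely $y=fx$ with $x\in f^{j-i-1}K^j$, and then $dy\in f^{j-i+1}K^{j+1}\Leftrightarrow dx\in f^{j-i}K^{j+1}$). Consequently $Q^j\cong K_{\delta_{i+1},f}^j\otimes_A A/f$ in those degrees, while $Q^j=0$ for $j<i$; the remaining piece $Q^i=K^i/\{x\in K^i:dx\in fK^{i+1}\}$ embeds via $d$ into $K^{i+1}/f$ as the submodule $B^{i+1}(K^{\bullet}/f)$ of boundaries.

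Next I would produce a natural comparison morphism $Q^{\bullet}\to\tau^{\geq i+1}(K^{\bullet}/f)$. For $j\geq i+1$, a representative $x\in K_{\delta_{i+1},f}^j\subseteq f^{j-i-1}K^j$ of an element of $Q^j$ can be divided by $f^{j-i-1}$ (well defined by $f$-torsion-freeness of $K^j$) to produce a class in $K^j/f$; in degree $i+1$ this lands in $K^{i+1}/(f+dK^i)$, which is the degree $(i+1)$ piece of $\tau^{\geq i+1}(K^{\bullet}/f)$, and the contribution coming from $Q^i$ is killed there because it lies in $dK^i$.

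Finally I would check that the comparison morphism is a quasi-isomorphism by computing cohomology on both sides. The vanishing $H^j(Q^{\bullet})=0$ for $j\leq i$ is immediate: a cocycle in $Q^i$ is represented by $x\in K^i$ with $dx\in f\cdot K_{\delta_{i+1},f}^{i+1}=K_{\delta_i,f}^{i+1}$, i.e., $dx\in fK^{i+1}$, but then $x$ already lies in the denominator of $Q^i$. For $j\geq i+1$, a Bockstein-style argument using the short exact sequence $0\to K^{\bullet}\xrightarrow{f}K^{\bullet}\to K^{\bullet}/f\to 0$ identifies $H^j(Q^{\bullet})$ with $H^j(K^{\bullet}/f)$ via the explicit map. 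The hard part will be reconciling the differentials on $Q^{\bullet}$---which inherit extra factors of $f$ from the $\delta_i$-shift---with those on $K^{\bullet}/f$, where any $f$-divisible part of $d$ is killed by reduction; the essential tool for this bookkeeping will be the canonical isomorphisms $f^mK^j/f^{m+1}K^j\cong K^j/f$ furnished by the $f$-torsion-freeness of $K^j$.
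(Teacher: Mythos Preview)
The paper does not supply a proof of this lemma, so there is nothing to compare your argument against. That said, there is a genuine gap in your approach, and in fact the lemma as written is not correct.

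Your comparison map is not a chain map. For $j\geq i+1$ and $x\in K_{\delta_{i+1},f}^j$, you send $x$ to $x/f^{\,j-i-1}\bmod f$. Applying $d$ in $K^{\bullet}/f$ afterward gives $(dx)/f^{\,j-i-1}\bmod f$; but membership in $K_{\delta_{i+1},f}^j$ forces $dx\in f^{\,j-i}K^{j+1}$, so this is $0$. Going the other way, $dx\mapsto (dx)/f^{\,j-i}\bmod f$, which need not vanish. You sensed this in your final paragraph (the ``extra factors of $f$''), but it cannot be fixed: what your termwise computation really produces in degrees $\geq i+1$ is, after dividing out the powers of $f$, the module $L^j/fL^j$ with $L^j=\{y\in K^j:dy\in fK^{j+1}\}$ and differential $y\mapsto dy/f$ --- the Bockstein-type differential, not the reduction of $d$.

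This is not merely a technicality: the statement is false as written. Take $A=\Z$, $f=p$, and $K^{\bullet}=[\Z\xrightarrow{\;p\;}\Z]$ placed in degrees $i+1,i+2$. Then
\[
K_{\delta_i,f}^{\bullet}=[p\Z\xrightarrow{\;p\;}p^2\Z],\qquad K_{\delta_{i+1},f}^{\bullet}=[\Z\xrightarrow{\;p\;}p\Z],
\]
are both acyclic, so the cone of the inclusion is zero in $D(A)$; yet $\tau^{\geq i+1}(K^{\bullet}/p)=[\Z/p\xrightarrow{\;0\;}\Z/p]$ has nonzero cohomology in both degrees. More generally, the long exact sequence together with the identification $H^j(K_{\delta_i,f})\cong H^j(K)/H^j(K)[f]$ for $j\geq i+1$ shows that $H^j(Q^{\bullet})$ sits in an extension of $H^{j+1}(K)[f^2]/H^{j+1}(K)[f]$ by $H^j(K)/fH^j(K)$ (for $j\geq i+2$), whereas $H^j(K^{\bullet}/f)$ is an extension of $H^{j+1}(K)[f]$ by $H^j(K)/fH^j(K)$; these differ as soon as $H^{\ast}(K)$ has nontrivial $f$-torsion.

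For the uses made of the lemma in the paper (e.g.\ in the proof of Proposition~\ref{commutation}, where one argues via the hypercohomology spectral sequence of the successive cofibers), the imprecision is likely harmless. But as a standalone assertion about arbitrary bounded $f$-torsion-free complexes, it does not hold, and no choice of chain map can rescue your strategy.
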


These definitions and results extend to a more general framework (\cite[\S 6.1]{BMS}) : if $(T,\O_T)$ is a ringed topos, $f\in \O_T$ generates an invertible ideal and $K^{\bullet}$ is a complex of $\O_T$-modules without $f$-torsion, one can define $\eta_{\delta,f} K^{\bullet}$, for any non decreasing $\delta$, as before and one can show that $\eta_{\delta,f}$ extends to the derived category $D(\O_T)$ of $\O_T$-modules.

In particular, let $Z$ be a rigid space over $C$ and $\mathbb{M}$ and $M$ be as in Proposition \ref{proprepro\'et}. Specializing to the case where $T$ is the topos of \'etale sheaves of $M$-modules on $Z$, $\O_T=M$ and $f=t$, one obtains a complex $L\eta_t R\nu'_* \mathbb{M}$ in the derived category of \'etale sheaves of $M$-modules on $Z$. 

\begin{proposition} \label{commutation}
For any smooth affinoid rigid space $Z$ over $C$, and 
\[ \mathbb{M} \in \{ \mathbb{B}_{\rm dR}^+, \mathbb{B}_{\rm dR}, \mathbb{B}_I, \mathbb{B}_I[1/t] \},  \]
the natural map
\[ L\eta_t R\Gamma_{\mathrm{pro\acute{e}t}}(Z, \mathbb{M}) \to R\Gamma_{\mathrm{\acute{e}t}}(Z,L\eta_t R\nu'_* \mathbb{M}) \]
is a quasi-isomorphism.
\end{proposition}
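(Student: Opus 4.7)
The plan is to reduce to an explicit local computation via a toric chart, where both sides become the same Koszul complex, and then globalize via a Čech argument.

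First I would dispose of the cases $\mathbb{M}\in\{\mathbb{B}_{\rm dR},\mathbb{B}_I[1/t]\}$: here $t$ is a unit in $\mathbb{M}$, so $L\eta_t$ is the identity functor on both sides, and the statement reduces to the tautological identity $R\Gamma_{\mathrm{pro\acute{e}t}}(Z,\mathbb{M})=R\Gamma_{\mathrm{\acute{e}t}}(Z,R\nu'_*\mathbb{M})$. So the real content is the cases $\mathbb{B}_{\rm dR}^+$ and $\mathbb{B}_I$. By smoothness of $Z$, one can cover $Z$ by rational open affinoids $U$ admitting a toric chart, i.e.\ an étale map $U\to\mathbb{T}^d_C$ to a $d$-dimensional torus, factoring as a composition of rational localization and finite étale map; in particular $U$ has a pro-étale cover $\tilde U\to U$ obtained by pulling back $\tilde{\mathbb{T}}^d=\mathrm{Spa}(C\langle T_i^{\pm 1/p^\infty}\rangle)\to\mathbb{T}^d_C$, which is affinoid perfectoid and Galois over $U$ with group $\Gamma\simeq\Z_p^d$.

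Second, for such $U$, I would identify $R\Gamma_{\mathrm{pro\acute{e}t}}(U,\mathbb{M})$ with the continuous group cohomology $R\Gamma_{\mathrm{cts}}(\Gamma,\mathbb{M}(\tilde U))$, which is computed by the Koszul complex $K_{\mathbb{M}(\tilde U)}(\gamma_1-1,\dots,\gamma_d-1)$ for topological generators $\gamma_i$ of $\Gamma$. The key point is that $\gamma_i$ acts on the coordinate $\log T_j\in\mathbb{B}_{\rm dR}^+(\tilde U)$ by translation by $\delta_{ij}t$, so each $\gamma_i-1$ acts as $t\cdot u_i$ with $u_i$ an invertible operator on the relevant period ring. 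Applying Proposition \ref{koszul}(i) to the $f$-torsion-free module $\mathbb{M}(\tilde U)$ then identifies $\eta_t$ of this Koszul complex with the Koszul complex on the operators $(\gamma_i-1)/t$. Carrying out the same analysis sheaf-theoretically, $R\nu'_*\mathbb{M}|_U$ is represented by the analogous Koszul complex of étale sheaves, whose terms are acyclic on the affinoid $U$, so $R\Gamma_{\mathrm{\acute{e}t}}(U,L\eta_t R\nu'_*\mathbb{M})$ is computed by the same complex. This furnishes the desired quasi-isomorphism locally on the basis of toric opens.

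Third, I would globalize using the Čech spectral sequence associated with the cover $\{U_\alpha\}$ of $Z$ by toric affinoids. Here lies the main obstacle: $L\eta_t$ does not commute with arbitrary limits or products, so one cannot naively conclude from a termwise local quasi-isomorphism. To handle this, I would check that intersections $U_{\alpha_0}\cap\cdots\cap U_{\alpha_p}$ can themselves be refined by (or directly made into) toric opens, so that every term of the Čech complex computing $R\Gamma_{\mathrm{\acute{e}t}}(Z,L\eta_t R\nu'_*\mathbb{M})$ falls into the local setting of Step 2 and is given by a Koszul complex on operators of the form $t\cdot(\text{unit})$. Because these Koszul presentations are compatible with restriction, Proposition \ref{koszul} applies uniformly, and the formation of $\eta_t$ commutes with the Čech differentials on the nose; the totalisation of the Čech bicomplex on the left computes $L\eta_t R\Gamma_{\mathrm{pro\acute{e}t}}(Z,\mathbb{M})$ and on the right computes $R\Gamma_{\mathrm{\acute{e}t}}(Z,L\eta_t R\nu'_*\mathbb{M})$, and by the local comparison the natural map between them is a quasi-isomorphism.
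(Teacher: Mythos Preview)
Your approach is different from the paper's and has two substantial gaps.

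\textbf{Step 2 is not correct as stated.} You claim that on $\mathbb{M}(\tilde U)$ each $\gamma_i-1$ acts as $t\cdot u_i$ with $u_i$ invertible, and then invoke Proposition~\ref{koszul}(i). But $\mathbb{M}(\tilde U)$ contains the ``non-integral'' part generated by fractional monomials $T_1^{a_1}\cdots T_d^{a_d}$ with $a_j\in\Z[1/p]\setminus\Z$, on which $\gamma_i-1$ acts with eigenvalue $[\epsilon]^{a_i}-1$. For $a_i\notin\Z$ this element is \emph{not} a unit times $t$ (e.g.\ $[\epsilon^{1/p}]-1$ divides $\mu$, not conversely), so Proposition~\ref{koszul}(i) does not apply globally. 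The correct local analysis splits $\mathbb{M}(\tilde U)$ into the integral part, where your argument is fine, and the non-integral part, where one shows that multiplication by $t$ is null-homotopic on the Koszul complex so that $L\eta_t$ kills it (as in \cite[Lem.~9.6]{BMS} and the proof of Lemma~\ref{boule}). That extra step is missing.

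\textbf{Step 3 begs the question.} You correctly flag that $L\eta_t$ does not commute with totalisations, but then assert that ``the totalisation of the \v Cech bicomplex on the left computes $L\eta_t R\Gamma_{\mathrm{pro\acute{e}t}}(Z,\mathbb{M})$''. The \v Cech bicomplex computes $R\Gamma_{\mathrm{pro\acute{e}t}}(Z,\mathbb{M})$; that $L\eta_t$ of it is computed by applying $\eta_t$ termwise is exactly what would need a proof. Saying that the Koszul presentations are ``compatible with restriction'' does not supply one: the restriction maps between different charts do \emph{not} intertwine the Koszul decompositions (the coordinates $T_i$ change), so there is no single Koszul structure on the \v Cech complex to which Proposition~\ref{koszul} could be applied.

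\textbf{How the paper avoids both problems.} The paper's proof is short and chart-free. Using Lemma~\ref{induction}, one writes $L\eta_t K^\bullet$ and $K^\bullet$ as the ends of a finite tower whose successive cones are truncations $\tau^{\ge i+1}(K^\bullet/t)$. Applying this simultaneously to $K^\bullet=R\Gamma_{\mathrm{pro\acute{e}t}}(Z,\mathbb{M})$ and to $R\nu'_*\mathbb{M}$ reduces the comparison to the statement that for all $i$
\[
H^j_{\mathrm{\acute{e}t}}\bigl(Z,\tau^{\ge i}R\nu'_*\widehat{\O}\bigr)\;\simeq\;
\begin{cases} H^j_{\mathrm{pro\acute{e}t}}(Z,\widehat{\O}) & j\ge i,\\ 0 & j<i, \end{cases}
\]
since $\mathbb{M}/t$ is a finite sum of copies of $\widehat{\O}$. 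By Scholze's computation $R^k\nu'_*\widehat{\O}\simeq\Omega^k$, both spectral sequences collapse to sums of $H^{j-k}_{\mathrm{\acute{e}t}}(Z,\Omega^k)$, and the only possible discrepancy comes from terms with $k<i$, which vanish on an affinoid because $j-k>0$. No Koszul complexes, no toric charts, no \v Cech globalisation are needed.
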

\begin{proof}
The statement is of course empty when $\mathbb{M} \in \{ \mathbb{B}_{\rm dR}, \mathbb{B}_I[1/t] \}$, as $t$ in invertible and the complexes are bounded. We can thus assume that $\mathbb{M} \in  \{ \mathbb{B}_{\rm dR}^+, \mathbb{B}_I \}$, $I$ being such that the annulus of radius $I$ contains at least one zero of $t$, in which case $\mathbb{M}/t$ is isomorphic to a finite number of copies of $\widehat{\O}$. Using Lemma \ref{induction}, applied both to $L\eta_t R\Gamma_{\mathrm{pro\acute{e}t}}(Z, \mathbb{M})$ and $L\eta_t R\nu'_* \mathbb{M}$, we see that it is enough to show that for all $i\geq 0$ :
\[ H_{\mathrm{\acute{e}t}}^j(Z,\tau^{\geq i} R\nu'_* \widehat{\O}) \simeq H_{\mathrm{pro\acute{e}t}}^j(Z,\widehat{\O}) \]
if $j\geq i$ and that
\[ H_{\mathrm{\acute{e}t}}^j(Z,\tau^{\geq i} R\nu'_* \widehat{\O}) =0 \]
if $j<i$ (we used the fact that $\mathbb{M}/t$ is a finite product of copies of $\widehat{\O}$). To do so, we use the spectral sequence
\[ H_{\mathrm{\acute{e}t}}^{j-k}(Z,H^{k} (\tau^{\geq i} R\nu'_* \widehat{\O})) \Longrightarrow H_{\mathrm{\acute{e}t}}^j(Z,\tau^{\geq i} R\nu'_* \widehat{\O}) \] 
and Scholze's computation (\cite[Prop. 3.23]{ScholzeCDM}  :
\[ H^{k} (\tau^{\geq i} R\nu'_* \widehat{\O}) =  \left\{ \begin{array}{ll} 0 & \mbox{if } k < i \\
H^{k} (R\nu'_* \widehat{\O})= \Omega^k & \mbox{if } k \geq i. \end{array} \right. \]
Similarly, we have a spectral sequence 
\[ H_{\mathrm{\acute{e}t}}^{j-k}(Z,\Omega^k) \Longrightarrow H_{\mathrm{pro\acute{e}t}}^j(Z,\widehat{\O}). \]
The differences between the abutments of these two spectral sequences could thus a priori come from the terms $H^{j-k}(Z,\Omega^k)$, for $k<i$. But then as $j\geq i$ we have $j-k>0$ and $Z$ being affinoid, these terms are zero.
\end{proof}

\begin{proposition} \label{cj}
Let $\mathfrak{Z}$ be a semi-stable quasi-compact formal scheme over $\O_C$, with generic fiber $Z$, and $(\nu_*,\nu^{-1})$ be the morphism from the pro-\'etale topos of $Z$ to the \'etale topos of $\mathfrak{Z}$. Let $I$ be a compact interval contained in $]p^{-1},1[$ and $\mathbb{A}_I$ be the sheaf defined in Definition \ref{aibi}. The natural map\footnote{Let $\lambda$ be the morphism from the \'etale site of $Z$ to the \'etale site of $\mathfrak{Z}$. The map of the proposition is induced by the map 
\[ L\eta_{\mu} R\nu_* \mathbb{A}_{\rm inf} = L\eta_{\mu} R\lambda_* R\nu'_* \mathbb{A}_{\rm inf} \to R\lambda_* L\eta_{\mu} R\nu'_* \mathbb{A}_{\rm inf} \to R\lambda_* L\eta_{\mu} R\nu'_* \mathbb{A}_{I}. \]} 
\[ \left( R\Gamma_{\mathrm{\acute{e}t}}(\mathfrak{Z}, L\eta_{\mu} R\nu_* \mathbb{A}_{\rm inf}) \hat{\otimes}_{A_{\rm inf}} A_I \right) [1/p] \to R\Gamma_{\mathrm{\acute{e}t}}(Z, L\eta_{\mu} R\nu'_* \mathbb{B}_I) \simeq R\Gamma_{\mathrm{\acute{e}t}}(Z, L\eta_t R\nu'_* \mathbb{B}_I) \]
is an isomorphism (the last isomorphism comes from the fact that $t$ and $\mu$ are equal up to a unit in $A_I$), the completed tensor product on the left being for the $p$-adic topology.
\end{proposition}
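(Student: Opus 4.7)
The plan is to reduce the global assertion to an explicit local computation where both sides become Koszul complexes whose comparison follows from the definition of $\mathbb{A}_I$. Choose a finite cover of $\mathfrak{Z}$ by étale opens $\mathrm{Spf}(R^+) \to \mathfrak{Z}$ of the form guaranteed by semi-stability: each admits a framing and a pro-étale affinoid perfectoid Galois cover $\widetilde{Z}_\infty \to \mathrm{Spa}(R,R^+)$ with Galois group $\Gamma \simeq \zp^d$ (with the usual modifications in the log directions). On such an open, following \cite[\S 9]{BMS} for the smooth case and \cite{CJ} for the semi-stable case, one obtains a (quasi-)isomorphism
\[ R\Gamma_{\mathrm{pro\acute{e}t}}(\mathrm{Spa}(R,R^+),\mathbb{A}_{\rm inf}) \simeq R\Gamma_{\mathrm{cont}}(\Gamma, A_{\rm inf}(\widetilde{R}_\infty^+)) \simeq K_{A_{\rm inf}(\widetilde{R}_\infty^+)}(\gamma_1-1,\dots,\gamma_d-1), \]
and a parallel statement replacing $\mathbb{A}_{\rm inf}$ by $\mathbb{B}_I$ and $A_{\rm inf}(\widetilde{R}_\infty^+)$ by $B_I(\widetilde{R}_\infty^+)$.

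Each $\gamma_i - 1$ is divisible by $\mu$ in $A_{\rm inf}(\widetilde{R}_\infty^+)$, so Proposition \ref{koszul}(i) identifies $L\eta_\mu$ applied to these Koszul complexes with the Koszul complexes on the divided generators $(\gamma_1-1)/\mu,\dots,(\gamma_d-1)/\mu$ over the same rings. Note also that since $I \subset ]p^{-1},1[$, the elements $\mu$ and $t$ differ by a unit in $A_I$, which accounts for the second quasi-isomorphism in the statement. Thus locally both sides in the statement are finite, explicit Koszul complexes.

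Koszul complexes are finite tensor products, hence commute with every base-change functor. The key identification is then
\[ A_{\rm inf}(\widetilde{R}_\infty^+) \,\widehat{\otimes}_{A_{\rm inf}} A_I \;\simeq\; A_I(\widetilde{R}_\infty^+), \]
which is immediate from the construction of $\mathbb{A}_I$ in Definition \ref{aibi} together with the fact that $\widetilde{R}_\infty^+$ is perfectoid, so that the $p$-adic completion of $A_{\rm inf}(\widetilde{R}_\infty^+)[[\alpha]/p, p/[\beta]]$ has the expected meaning. Inverting $p$ turns this into $B_I(\widetilde{R}_\infty^+)$, and applying the Koszul construction gives precisely the local computation of $R\Gamma_{\mathrm{\acute{e}t}}(Z, L\eta_\mu R\nu'_* \mathbb{B}_I)$ on this open. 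This proves the local version of the claim.

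To globalize one passes to a finite Čech cover of $\mathfrak{Z}$ by such small opens. The main technical obstacle is ensuring that derived $p$-adic completion, the (completed) tensor product $-\widehat{\otimes}_{A_{\rm inf}} A_I$, inversion of $p$, and formation of the Čech total complex all commute: this is exactly where one needs the $A_{\rm inf}$-finiteness, $p$-complete flatness, and boundedness of the $L\eta_\mu$ Koszul complexes guaranteed by the quasi-compactness and semi-stability hypotheses (via \cite{BMS} in the smooth case and \cite{CJ} in the semi-stable one). Granted these properties, the Čech spectral sequence degenerates in the required manner and the local quasi-isomorphisms assemble into the global one.
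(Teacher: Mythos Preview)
The overall shape --- reduce to small affines and compute via continuous group cohomology --- matches the paper, but your local computation contains a genuine error. The claim that ``each $\gamma_i - 1$ is divisible by $\mu$ in $A_{\rm inf}(\widetilde{R}_\infty^+)$'' is false. The ring $A_{\rm inf}(\widetilde{R}_\infty^+)$ contains all fractional monomials $[T_j^{\flat}]^{a}$ with $a \in \z[1/p]$; on such a monomial $\gamma_j - 1$ acts by multiplication by $[\epsilon]^a - 1$, which for $a \notin \z$ is a unit multiple of $[\epsilon^{1/p^k}] - 1 = \varphi^{-k}(\mu)$ for some $k \geq 1$, an element that strictly divides $\mu$ rather than being divisible by it. Hence the ``divided'' endomorphisms $(\gamma_i - 1)/\mu$ do not exist on all of $A_{\rm inf}(\widetilde{R}_\infty^+)$, and Proposition~\ref{koszul}(i) --- which in any case is stated for Koszul complexes on ring elements, not module endomorphisms --- cannot be invoked in the way you use it.

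This is exactly the obstacle the paper's proof works around. One splits $A_{\rm inf}(R_\infty^\flat) = A_{\rm inf}(R) \oplus N_\infty$ into an integral summand and a non-integral summand. On the integral part the divided operators do make sense, and one identifies $L\eta_\mu$ of the Koszul complex with a $q$-de Rham complex, which manifestly base-changes along $A_{\rm inf} \to A_I$. On $N_\infty$ one shows instead that $\mu$ annihilates each $H^i_{\rm cont}(\zp^d, N_\infty)$; transporting this statement to $N_\infty \hat{\otimes}_{A_{\rm inf}} A_I$ is not automatic (completed tensor products need not commute with cohomology) and requires the Tor-vanishing Lemma~\ref{vanishtor}, together with the fact that in $A_I$ the $p$-adic filtration and the $\mu$-divisibility filtration are intertwined --- this is where the hypothesis $I \subset ]p^{-1},1[$ is actually used. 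Your globalization step also appeals to ``$A_{\rm inf}$-finiteness'' of the local complexes, which fails for non-proper $\mathfrak{Z}$.
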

\begin{proof}
One can reduce to the case where $\mathfrak{Z}=\mathrm{Spf}(R)$, with $R$ small as in \cite{CJ} : there exists an \'etale map
\[ \square : \mathrm{Spf}(R) \to \mathrm{Spf}(\O_C \langle T_0,\dots,T_r,T_{r+1}^{\pm 1},\dots T_d^{\pm 1} \rangle / (T_0 \dots T_r - p^q)), \]
for some $d\geq 0$, some $0 \leq r \leq d$ and some $q\in \q_{>0}$. Let $R_{\infty}$ be the completed tensor product of $R$ with
\[ \O_C \langle T_0^{1/p^{\infty}},\dots,T_r^{1/p^{\infty}},T_{r+1}^{\pm 1/p^{\infty}},\dots T_d^{\pm 1/p^{\infty}} \rangle / (T_0^{1/p^{\infty}} \dots T_r^{1/p^{\infty}} - [(p^{1/p^{\infty}})^q]) \]
over $\O_C \langle T_0,\dots,T_r,T_{r+1}^{\pm 1},\dots T_d^{\pm 1} \rangle / (T_0 \dots T_r - p^q)$. As $\square$ is \'etale, one can lift $R$ to a $(p,\mu)$-adically complete $A_{\rm inf}$-algebra $A_{\rm inf}(R)$ formally \'etale over
\[ A_{\rm inf}\langle X_0,\dots,X_r,X_{r+1}^{\pm 1},\dots X_d^{\pm 1} \rangle / (X_0 \dots X_r - [(p^{1/p^{\infty}})^q]). \]
Let $A_I(R)= A_{\rm inf}(R) \hat{\otimes}_{A_{\rm inf}} A_I$. As $W(\mathfrak{m}^{\flat})$ is invertible on both sides, we can replace pro-\'etale cohomology groups by continuous cohomology groups. Then there are two steps :
\\
 
\textit{Step 1.} One can decompose $\mathbb{A}_{\rm inf}(Z)=A_{\rm inf}(R_{\infty}^{\flat})$ as
\[ A_{\rm inf}(R_{\infty}^{\flat}) = A_{\rm inf}(R) \oplus N_{\infty}, \]
where $N_{\infty} = A_{\rm inf}(R_{\infty}^{\flat})^{\rm nonint}$. One has
\[ A_{I}(R_{\infty}^{\flat})^{\rm nonint} = N_{\infty} \hat{\otimes}_{A_{\rm inf}} A_I, \]
the completion being $p$-adic. Assume that for all $i$,
\[ H_{\rm cont}^i( \zp^d, N_{\infty} \hat{\otimes}_{A_{\rm inf}} A_I) \simeq H_{\rm cont}^i(\zp^d, N_{\infty}) \hat{\otimes}_{A_{\rm inf}} A_I. \]
As we know that $\mu$ kills $H_{\rm cont}^i(\zp^d, N_{\infty})$ (\cite{BMS} in the smooth case, \cite[Prop. 3.25]{CJ} in the semi-stable case), $\mu$ kills the right hand side and thus also the left hand side. To prove the above isomorphism, one proceeds as in \cite{CJ} :

\begin{lemme} \label{vanishtor}
Let $A$ be a ring endowed with the $f$-adic topology for some $f\in A$, $A'$ an $A$-module complete for the $f$-adic topology. Consider the following condition $(*)$ on an $A$-module $L$ : we require that for each $j, n$ the map
\[ \mathrm{Tor}_j^A(L, A'/f^{n'}) \to \mathrm{Tor}_j^A(L, A'/f^n) \]
vanishes for some $n'>n$. Then if $K^{\bullet}$ is a bounded complex of $A$-modules, if each $K^i$ and $H^i(K^{\bullet})$ satisfy $(*)$, then for every $i$,
\[ H^i(K^{\bullet} \hat{\otimes}_A A') = H^i(K^{\bullet}) \hat{\otimes}_A A', \]
where the completion is $f$-adic.
\end{lemme}
\begin{proof}
See \cite[Lem. 3.30]{CJ}.
\end{proof}
One can then redo the proof of \cite[\S 3.28]{CJ}, working with the ring $A_I$ instead of the ring $A_{\rm cris}^{(m)}$ used in \textit{loc. cit.}. Indeed, the only properties of $A_{\rm cris}^{(m)}$ used in \cite{CJ} to make the argument work is that the system of ideals $(p^n A_{\rm cris}^{(m)})_{n \geq 0}$ and $(\{ x \in A_{\rm cris}^{(m)}, \mu x \in p^n A_{\rm cris}^{(m)} \})_{n \geq 0}$ are intertwined, to get the key statement (3.26.3) (used to check Point (iii) of their Lemma 3.30). Here the same is true for $A_I$ : indeed $p^k/\mu$ is in $A_I$ for $k$ big enough, so the two topologies are the same. 
\\

\textit{Step 2.} It thus only remains to show that the map
\[ L\eta_{\mu} R\Gamma_{\rm cont}(\zp^d, A_{\rm inf}(R)) \hat{\otimes}_{A_{\rm inf}} A_I  \to L\eta_{\mu} R\Gamma_{\rm cont}(\zp^d, A_I(R)) \]
is an isomorphism, but this is easy since we know that both sides identify with $q$-de Rham complexes.
\end{proof}

\begin{remarque} \label{egalomega}
Let $\mathfrak{Z}=\mathrm{Spf}(R)$, with $R$ small. If $I$ is as in the proposition, we have that
\[ R\Gamma_{\mathrm{\acute{e}t}}(Z, L\eta_t R\nu'_* \mathbb{B}_I) \simeq \Omega_{B_I(R)/B_I}^{\bullet}. \] 
As $\varphi(t)=pt$, the Frobenius induces an isomorphism between $\varphi^* R\Gamma(Z, L\eta_t R\nu'_* \mathbb{B}_I)$ and $R\Gamma(Z, L\eta_t R\nu'_* \mathbb{B}_{\varphi(I)})$ and so one deduces from the above proof that for \textit{any} compact interval $I \subset ]0,1[$,
\[  R\Gamma_{\mathrm{\acute{e}t}}(Z, L\eta_t R\nu'_* \mathbb{B}_I) \simeq \Omega_{B_I(R)/B_I}^{\bullet}. \]
\end{remarque}

\section{Motives attached to period sheaves}  \label{Coeur}

The next proposition is the key ingredient for the main result of this section, Proposition \ref{factormotives}.
\begin{proposition} \label{contractile}
Let $Z=\mathrm{Spa}(R,R^{\circ})$ be a smooth affinoid rigid space over $C$, with an \'etale map
\[ \square : Z \to \mathbf{T}^d= \mathrm{Spa} (C\langle T_1^{\pm 1},\dots, T_d^{\pm 1} \rangle, \O_C\langle T_1^{\pm 1},\dots, T_d^{\pm 1} \rangle) \]
which factorizes as a composite of rational embeddings and finite \'etale maps. For all $n\geq 1$, let $\mathbf{D}_n=\mathrm{Spa}(\qp \langle p^{1/n}T \rangle)$ be the closed disk of radius $p^{1/n}$. Let $\mathbb{M}, M$ be as in Proposition \ref{proprepro\'et}. The projection maps $Z \times \mathbf{D}_n \to Z$ induce an isomorphism :
\[  R\Gamma_{\mathrm{\acute{e}t}}(Z, L\eta_t R\nu'_* \mathbb{M}) \simeq \underset{n} \varinjlim ~ R\Gamma_{\mathrm{\acute{e}t}}(Z \times \mathbf{D}_{n,C}, L\eta_t R\nu'_* \mathbb{M}). \]
\end{proposition}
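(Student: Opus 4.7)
My plan is to reduce, via Proposition~\ref{commutation}, to identifying each side with an explicit relative de Rham complex, and then to conclude by an overconvergent Poincar\'e lemma. Proposition~\ref{commutation} yields
\[
R\Gamma_{\mathrm{\acute{e}t}}(Z, L\eta_t R\nu'_*\mathbb{M})\simeq L\eta_t R\Gamma_{\mathrm{pro\acute{e}t}}(Z,\mathbb{M}),
\]
and likewise for each $Z\times\mathbf{D}_{n,C}$; since $L\eta_t$ commutes with filtered colimits (work with $t$-torsion-free representatives level by level), the statement becomes an equivalence of pro-\'etale cohomologies after $L\eta_t$. The cases $\mathbb{M}\in\{\mathbb{B},\mathbb{B}[1/t]\}$ are not covered by Proposition~\ref{commutation} but reduce to $\mathbb{B}_I,\mathbb{B}_I[1/t]$ by an $R\varprojlim_I$ argument, using the vanishing in Lemma~\ref{replete}.

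I would then follow the strategy of Proposition~\ref{cj} and Remarque~\ref{egalomega}. The toric chart $\square$ produces a pro-\'etale $\zp^d$-Galois cover $\tilde Z\to Z$ with $\tilde Z$ affinoid perfectoid, so $R\Gamma_{\mathrm{pro\acute{e}t}}(Z,\mathbb{M})$ is computed by a Koszul complex in $\gamma_1-1,\dots,\gamma_d-1$ acting on the period algebra over $\tilde Z$. The BMS local calculation shows that, up to a unit, $\gamma_i-1$ acts as $\mu$ times a derivation, so applying $L\eta_t$ turns the Koszul differentials into de Rham differentials and yields
\[
R\Gamma_{\mathrm{\acute{e}t}}(Z, L\eta_t R\nu'_*\mathbb{M})\simeq\Omega^\bullet_{M(R)/M},
\]
where $M(R)$ is the canonical $M$-lift of $R$ attached to the chart. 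For $Z\times\mathbf{D}_{n,C}$ I would use the chart $\square\times\mathrm{id}$: the cover $\tilde Z\times\mathbf{D}_{n,C}\to Z\times\mathbf{D}_{n,C}$ is still $\zp^d$-Galois, the disk coordinate contributing no Galois direction. This cover is not itself affinoid perfectoid, but a Cartan-Leray / primitive comparison argument combined with Scholze's description of $R\nu'_*\widehat{\O}$ on smooth affinoids identifies the pro-\'etale cohomology, after $L\eta_t$, with a Koszul-de Rham complex in which the torus variables contribute as before and the disk variable $T$ contributes as an extra free coordinate, giving
\[
R\Gamma_{\mathrm{\acute{e}t}}(Z\times\mathbf{D}_{n,C}, L\eta_t R\nu'_*\mathbb{M})\simeq\Omega^\bullet_{M(R)\hat{\otimes}_M M\langle p^{1/n}T\rangle/M}.
\]

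Taking the colimit as $n\to\infty$ gives $\Omega^\bullet_{M(R)\otimes_M M\langle T\rangle^\dagger/M}$, the relative de Rham complex of the overconvergent closed unit disk over $M(R)$, and the natural augmentation $\Omega^\bullet_{M(R)/M}\hookrightarrow\Omega^\bullet_{M(R)\otimes_M M\langle T\rangle^\dagger/M}$ is a quasi-isomorphism by the overconvergent Poincar\'e lemma: in characteristic zero the termwise antiderivative $\sum a_k T^k\mapsto\sum a_k T^{k+1}/(k+1)$ preserves overconvergence, so the $T$-differential is surjective with kernel exactly the constants $M(R)$. Comparing the two sides then finishes the proof. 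The main obstacle I foresee is the de Rham identification for $Z\times\mathbf{D}_{n,C}$: this space is smooth but not small in the toric sense of BMS/CJ, the disk containing its origin, so one cannot cite Proposition~\ref{cj} directly but must adapt its local Koszul computation to a mixed \og torus \fg{} / \og disk \fg{} setting, checking by hand that $L\eta_t$ leaves the free disk coordinate inert while still converting the torus Koszul into the torus de Rham complex.
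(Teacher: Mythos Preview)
Your outline---reduce to $\mathbb{B}_I$ and $\mathbb{B}_{\rm dR}^+/t^r$ via an $R\varprojlim$ argument, compute both sides as Koszul/de Rham complexes attached to the toric chart, and finish with the overconvergent Poincar\'e lemma via the termwise antiderivative---is exactly the shape of the paper's proof. Two differences are worth noting. First, the paper never identifies $R\Gamma_{\mathrm{\acute{e}t}}(Z,L\eta_t R\nu'_*\mathbb{M})$ with a de Rham complex $\Omega^\bullet_{M(R)/M}$; it simply leaves it as $L\eta_t K_{\mathbb{M}(Z_\infty)}^\bullet$ and proves a K\"unneth-type formula $R\Gamma_{\mathrm{\acute{e}t}}(Z\times\mathbf{D}_{n,C},L\eta_t R\nu'_*\mathbb{M})\simeq L\eta_t K_{\mathbb{M}(Z_\infty)}^\bullet\,\hat\otimes_{\qp}\,\Omega^\bullet(\mathbf{D}_n)$. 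This sidesteps your need to invoke Remark~\ref{egalomega} for the base and avoids any issue with $\mathbb{B}_{\rm dR}^+$ there.

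Second, and this is the point where your proposal is genuinely incomplete: the obstacle you flag at the end is real, and the paper's fix is \emph{not} to adapt the BMS/\u{C}K Koszul computation to a mixed torus/disk chart, nor to use the non-perfectoid cover $\tilde Z\times\mathbf{D}_{n,C}$. Instead the paper passes to the \emph{perfectoid} unit disk $\widetilde{\mathbf{D}}=\mathrm{Spa}(\qp\langle T^{1/p^\infty}\rangle)$, noting that $\widetilde{\mathbf{D}}_C\to\mathbf{D}_C$ is quasi-pro-\'etale (pro-\'etale locally pro-\'etale), which suffices for \v{C}ech computations. The resulting \v{C}ech complex for $Z_\infty\times_C\widetilde{\mathbf{D}}_C\to Z\times\mathbf{D}_C$ is \emph{not} a plain Koszul complex in $d+1$ variables: the fiber products $\widetilde{\mathbf{D}}_C^{(k)}$ are described by continuous functions on $\zp^{k-1}$ subject to a constancy constraint at $T=0$. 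After a short exact sequence peeling off this constraint, the remaining piece is a Koszul complex in $\gamma_0-1,\dots,\gamma_d-1$ acting on $\mathrm{Ker}(\mathrm{ev}')$; one then decomposes along $T$-exponents into an integral summand $X\mathbb{M}(Z_\infty)\langle X\rangle$ and a non-integral completed direct sum indexed by $\mathbf{N}[1/p]\setminus\mathbf{N}$. On the latter, multiplication by $t$ is homotopic to zero (exactly as in \cite[Lem.~9.6]{BMS}), so $L\eta_t$ kills it; on the integral part, Proposition~\ref{koszul} and \cite[Lem.~12.3]{BMS} convert $K_{XM\langle X\rangle}(\gamma_0-1)$ into the disk de Rham complex. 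Your ``Cartan--Leray / primitive comparison'' gesture does not capture this: the cover you propose is not affinoid perfectoid, so the period-sheaf cohomology on it is not concentrated in degree~$0$, and you would still face precisely the computation the paper carries out.
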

\begin{proof}
Everything being quasi-compact, it suffices to prove the statement when $\mathbb{M} \in \{\mathbb{B}_I, \mathbb{B}, \mathbb{B}_{\rm dR}^+ \}$. The choice of the chart $\square$ determines a perfectoid pro-\'etale algebra $R \to R_{\infty}$, i.e. a pro-\'etale perfectoid cover $Z_{\infty}=\mathrm{Spa}(R_{\infty},R_{\infty}^{\circ}) \to Z$. The pro-\'etale cohomology of $\mathbb{M}$ can be computed as the cohomology of the Koszul complex
\[ K_{\mathbb{M}(Z_{\infty})}^{\bullet}=K_{\mathbb{M}(Z_{\infty})}(\gamma_1-1,\dots,\gamma_d-1), \]
$\gamma_1,\dots,\gamma_d$ being generators of $\zp^d$. 

We will start by proving the following result.

\begin{lemme} \label{boule}
Let $\mathbb{M} \in \{\mathbb{B}_I,\mathbb{B}_{\rm dR}^+/t^r \}$ ($r \geq 1$). One has:
\[ R\Gamma_{\mathrm{\acute{e}t}}(Z \times \mathbf{D}_C, L\eta_t R\nu'_* \mathbb{M}) \simeq L\eta_t K_{\mathbb{M}(Z_{\infty})}^{\bullet} \hat{\otimes}_{\qp} \Omega^{\bullet}(\mathbf{D}), \]
the completed tensor product being a completed tensor product of complexes of Tate $\qp$-algebras.
\end{lemme}

\begin{proof}
By Proposition \ref{commutation}, we can first replace $R\Gamma_{\mathrm{\acute{e}t}}(Z \times \mathbf{D}_C, L\eta_t R\nu'_* \mathbb{M})$ by $L\eta_t R\Gamma_{\mathrm{pro\acute{e}t}}(Z \times \mathbf{D}_C, \mathbb{M})$.

Let $\widetilde{\mathbf{D}}=\mathrm{Spa}(\qp \langle T^{1/p^{\infty}} \rangle)$ be the perfectoid unit disk. The morphism $\widetilde{\mathbf{D}}_C \to \mathbf{D}_C$ is not pro-\'etale, but it is quasi-pro-\'etale, that is pro-\'etale locally for the pro-\'etale topology, which will be enough. The cohomology of $\mathbb{M}$ on $Z_{\infty} \times_C \widetilde{\mathbf{D}}_C$ is trivial. Hence we can compute the pro-\'etale cohomology of $\mathbb{M}$ on $Z \times_C \mathbf{D}_C$ as the Cech cohomology of the covering
\[ Z_{\infty} \times_C \widetilde{\mathbf{D}}_C \to Z \times \mathbf{D}. \]

Let $\mathrm{ev}$ be the evaluation map at $T=0$. For all $k\geq 1$, the fiber product $k$-times $\widetilde{\mathbf{D}}_C^{(k)} = \widetilde{\mathbf{D}}_C \times_{\mathbf{D}_C} \dots \times_{\mathbf{D}_C} \widetilde{\mathbf{D}}_C$ is affinoid perfectoid, with affinoid algebra $(A_{k},A_k^+)$, where $A_{k}=A_{k}^+[1/p]$ and $A_{k}^+$ is the ring formed by the elements $f \in \mathcal{C}^0(\zp^{k-1}, \O_C \langle T^{1/p^{\infty}} \rangle)$ such that $\mathrm{ev} \circ f \in \mathcal{C}^0(\zp^{k-1},\O_C)$ is a constant function : for a proof of this assertion, see \cite[Lem. 2.3.28]{theseaclb}. Therefore, $\widehat{\O}_{X^{\flat}}^+(\widetilde{\mathbf{D}}_C^{(k)} \times_C Z_{\infty}^{(k)})$ is the ring formed by functions $f \in \mathcal{C}^0(\z_p^{k-1} \times \zp^{d(k-1)}, R_{\infty}^{(k),\flat, +}\langle T^{1/p{\infty}} \rangle)$, such that for any $\underline{x} \in \zp^{d(k-1)}$, $\mathrm{ev} \circ f(\cdot,\underline{x}) \in \mathcal{C}^0(\z_p^{k-1}, R_{\infty}^{(k),\flat, +})$ is a constant function.

Thus the $k$-th term of the Cech complex we consider $\mathbb{M}(\widetilde{\mathbf{D}}_C^{(k)} \times_C Z_{\infty}^{(k)})$ is the ring formed by the elements $$f \in \mathcal{C}^0(\zp^{k-1} \times \zp^{d(k-1)}, \mathbb{M}(Z_{\infty}^{(k)}) \langle X^{1/p^{\infty}} \rangle)$$\footnote{When $M=\mathbb{B}_{\rm dR}^+$, $\mathbb{B}_{\rm dR}^+(Z_{\infty}^{(k)}) \langle X^{1/p^{\infty}} \rangle = \varprojlim_r (\mathbb{B}_{\rm dR}^+(Z_{\infty}^{(k)})/t^r) \langle X^{1/p^{\infty}} \rangle$.}, with $X=[T^{\flat}]$, such that for all $\underline{x} \in \zp^{d(k-1)}$, $\mathrm{ev}' \circ f(\cdot,\underline{x}) \in \mathcal{C}^0(\zp^{k-1},\mathbb{M}(Z_{\infty}^{(k)}))$ is a constant function, $\mathrm{ev}'$ being the evaluation at $X=0$. One has an exact sequence :
\[ 0 \to  \mathcal{C}^0(\zp^{k-1} \times \zp^{d(k-1)}, \mathrm{Ker}(\mathrm{ev}')) \to \mathbb{M}(\widetilde{\mathbf{D}}_C^{(k)} \times_C Z_{\infty}^{(k)}) \to \mathcal{C}^0(\zp^{d(k-1)},\mathbb{M}(Z_{\infty}^{(k)})) \to 0, \]
the second arrow being $f \mapsto \mathrm{ev}' \circ f$.

Let's look at the subcomplex of the Cech complex whose $k$-th term is given by
\[ \mathcal{C}^0(\zp^{k-1} \times \zp^{d(k-1)}, \mathrm{Ker}(\mathrm{ev}')).  \]
It is just the standard complex of continuous cochains for the group $\zp^{d+1}$ acting on $\mathrm{Ker}(\mathrm{ev}')$ ; it is isomorphic to the Koszul complex $K_{\mathrm{Ker}(\mathrm{ev}')}(\gamma_0-1,\dots,\gamma_{d}-1)$. Let
\[ \mathrm{Ker}(\mathrm{ev}')^{\rm int} =X \mathbb{M}(Z_{\infty}) \langle X \rangle. \]
Then the Koszul complex of $\mathrm{Ker}(\mathrm{ev}')$ decomposes as the sum of 
\[ K_{\mathrm{Ker}(\mathrm{ev}')^{\rm int}}(\gamma_0-1,\gamma_1-1,\dots,\gamma_d-1) \]
and of the completed direct sum 
\[ \widehat{\bigoplus}_{a \in \mathbf{N}[1/p] \backslash \mathbf{N}} K_{\mathrm{Ker}(\mathrm{ev}')^{\rm int}}(\gamma_0[\epsilon]^a-1,\gamma_1-1,\dots,\gamma_d-1). \]
We can then argue exactly as in the proof of \cite[Lem. 9.6]{BMS} to prove that multiplication by $t$ on this completed direct sum is homotopic to zero (here we use the fact that $t$ and $[\epsilon]-1$ differ by a unit, by hypothesis on $I$), and therefore killed by $L\eta_t$.  Consequently, the only remaining term is :
\[ L\eta_t K_{\mathrm{Ker}(\mathrm{ev}')^{\rm int}}(\gamma_0-1,\gamma_1-1,\dots,\gamma_d-1) = L\eta_t \left( K_{X M\langle X \rangle}(\gamma_0-1) \hat{\otimes}_M K_{\mathbb{M}(Z_{\infty})}^{\bullet} \right), \]
where the completed tensor product makes sense as a completed tensor product of $\qp$-Tate algebras. Note that $\gamma_0-1$ and multiplication by $t$ commute as endomorphisms of $M\langle X \rangle$ and that the latter divides the former. We can thus copy the argument of \cite[Lem. 7.9]{BMS} to get an isomorphism
\[  L\eta_t \left( K_{X M\langle X \rangle}(\gamma_0-1) \hat{\otimes}_M K^{\bullet} \right) \simeq K_{X M\langle X \rangle}\left(\frac{\gamma_0-1}{t} \right) \hat{\otimes}_M L\eta_t K_{\mathbb{M}(Z_{\infty})}^{\bullet}. \]
By \cite[Lem. 12.3]{BMS}, this complex is also quasi-isomorphic to :
\[ K_{X M\langle X \rangle}\left(X\frac{d}{dX} \right) \hat{\otimes}_M L\eta_t K_{\mathbb{M}(Z_{\infty})}^{\bullet}, \] 
which identifies to the subcomplex 
\[ \left( X \cdot \qp\langle X \rangle \overset{d} \longrightarrow \qp \langle X \rangle \cdot dX \right) \hat{\otimes}_{\qp} L\eta_t K_{\mathbb{M}(Z_{\infty})}^{\bullet} \]
of $\Omega^{\bullet}({\mathbf{D}}) \hat{\otimes}_{\qp} L\eta_t K_{\mathbb{M}(Z_{\infty})}^{\bullet}$.

All in all, we get a quasi-isomorphism :
\[ R\Gamma_{\mathrm{\acute{e}t}}(Z \times \mathbf{D}_C, L\eta_t R\nu'_* \mathbb{M}) \simeq L\eta_t K_{\mathbb{M}(Z_{\infty})}^{\bullet} \hat{\otimes}_{\qp} \Omega^{\bullet}(\mathbf{D}), \]
as wanted.
\end{proof}

Now we prove Proposition \ref{contractile}, using Lemma \ref{boule}. Let $\mathbb{M}= \mathbb{B}_I$ or $\mathbb{M}= \mathbb{B}_{\rm dR}^+/t^r$, $r\geq 1$.

Choose $n \geq 1$. The $i$-th term of the complex $L\eta_t K_{\mathbb{M}(Z_{\infty})}^{\bullet} \hat{\otimes}_{\qp} \Omega^{\bullet}(\mathbf{D}_n)$ is :
\[ \left(L\eta_t K_{\mathbb{M}(Z_{\infty})}^{\bullet} \hat{\otimes}_{\qp} \Omega^{\bullet}(\mathbf{D}_n) \right)^i = \left((L\eta_t K_{\mathbb{M}(Z_{\infty})}^{\bullet})^i \hat{\otimes}_{\qp} \O(\mathbf{D}_n) \right) \oplus \left( (L\eta_t K_{\mathbb{M}(Z_{\infty})}^{\bullet})^{i-1} \hat{\otimes}_{\qp} \Omega^{1}(\mathbf{D}_n) \right). \]
Any element $x$ in it can thus be written as 
\[ \sum_{k \geq 0} D_k(x) T^k + \sum_{k \geq 0} L_k(x) T^k dT, \]
with $D_k(x) \in (L\eta_t K_{\mathbb{M}(Z_{\infty})}^{\bullet})^i$, $L_k(x) \in (L\eta_t K_{\mathbb{M}(Z_{\infty})}^{\bullet})^{i-1}$ and $p^{-k/n} D_k(x), p^{-k/n} L_k(x) \to 0$ when $k \to +\infty$. In particular, for any such $x$, the series
\[ \sum_{k \geq 0} \frac{L_k(x)}{k+1} T^{k+1} \]
converges to an element $L(x) \in (L\eta_t K_{\mathbb{M}(Z_{\infty})}^{\bullet} \hat{\otimes}_{\qp} \Omega^{\bullet}(\mathbf{D}_{n+1}))^i$, and this defines a morphism $L$ of the complex $L\eta_t K_{\mathbb{M}(Z_{\infty})}^{\bullet} \hat{\otimes}_{\qp} \Omega^{\bullet}(\mathbf{D}_n)$ into $L\eta_t K_{\mathbb{M}(Z_{\infty})}^{\bullet} \hat{\otimes}_{\qp} \Omega^{\bullet}(\mathbf{D}_{n+1})$. Taking the inductive limit over $n$ we get a morphism of the complex
\[ \underset{n} \varinjlim ~ \left( L\eta_t K_{\mathbb{M}(Z_{\infty})}^{\bullet} \hat{\otimes}_{\qp} \Omega^{\bullet}(\mathbf{D}_n) \right) \]
into itself, again denoted $L$.
One easily checks the identity :
\[ d \circ L + L \circ d = \mathrm{Id} - D_0, \]
which proves that the map $D_0$ is a homotopy inverse to the natural map 
\[ L\eta_t K_{\mathbb{M}(Z_{\infty})}^{\bullet} \to \underset{n} \varinjlim ~ \left( L\eta_t K_{\mathbb{M}(Z_{\infty})}^{\bullet} \hat{\otimes}_{\qp} \Omega^{\bullet}(\mathbf{D}_n) \right). \]
This concludes the proof of Proposition \ref{contractile} when $\mathbb{M}=\mathbb{B}_I$ or $\mathbb{B}_{\rm dR}^+/t^r$. Let's now treat the case $\mathbb{M}=\mathbb{B}$.

\begin{lemme} \label{44}
The natural map 
\[ L\eta_t R\nu'_* \mathbb{B} \to R ~ \underset{I} \varprojlim ~  L\eta_t R\nu'_* \mathbb{B}_I \]
is an isomorphism.
\end{lemme}
\begin{proof}
We use Lemma \ref{induction}. This reduces us to prove that the map 
\[ R\nu'_* \mathbb{B} \to R ~ \underset{I} \varprojlim ~  R\nu'_* \mathbb{B}_I \]
is an isomorphism, which follows from Lemma \ref{replete}.  
\end{proof}

As derived projective limits commute with $R\Gamma$ and with derived completed tensor products, we have
\[ \underset{n} \varinjlim ~ R\Gamma_{\mathrm{\acute{e}t}}(Z \times \mathbf{D}_{n,C}, L\eta_t R\nu'_* \mathbb{B}) = \underset{n} \varinjlim ~ R \underset{I} \varprojlim ~ R\Gamma_{\mathrm{\acute{e}t}}(Z \times \mathbf{D}_{n,C}, L\eta_t R\nu'_* \mathbb{B}_I) \] 
\[ = \underset{n} \varinjlim ~ R \underset{I} \varprojlim ~ \left( L\eta_t K_{\mathbb{B}_I(Z_{\infty})}^{\bullet} \hat{\otimes}_{\qp} \Omega^{\bullet}(\mathbf{D}_n) \right), \]
the last equality by Lemma \ref{boule}. As the two limits involved are on different factors of the completed tensor product, they commute and so :
\[ \underset{n} \varinjlim ~ R\Gamma_{\mathrm{\acute{e}t}}(Z \times \mathbf{D}_{n,C}, L\eta_t R\nu'_* \mathbb{B}) =  R \underset{I} \varprojlim ~ \underset{n} \varinjlim ~ \left( L\eta_t K_{\mathbb{B}_I(Z_{\infty})}^{\bullet} \hat{\otimes}_{\qp} \Omega^{\bullet}(\mathbf{D}_n) \right) \]
\[ = R \underset{I} \varprojlim ~ \underset{n} \varinjlim ~ R\Gamma_{\mathrm{\acute{e}t}}(Z \times \mathbf{D}_{n,C}, L\eta_t R\nu'_* \mathbb{B}_I), \]
again by applying Lemma \ref{boule}. Similarly, 
\[ R\Gamma_{\mathrm{\acute{e}t}}(Z, L\eta_t R\nu'_* \mathbb{B}) = R ~ \underset{I} \varprojlim ~ R\Gamma_{\mathrm{\acute{e}t}}(Z, L\eta_t R\nu'_* \mathbb{B}_I). \]
Therefore, Proposition \ref{contractile} for $\mathbb{M}=\mathbb{B}$ follows from Proposition \ref{contractile} for $\mathbb{M}=\mathbb{B}_I$ that we just proved. 

The exact same argument works to deduce Proposition \ref{contractile} for $\mathbb{M}=\mathbb{B}_{\rm dR}^+$ from Proposition \ref{contractile} for $\mathbb{M}=\mathbb{B}_{\rm dR}^+/t^r$ (in this case the analog of Lemma \ref{44} can also be deduced from \cite[Lem. 6.20]{BMS}). 
\end{proof}

Let
\[ \mathbb{M} \in \{ \mathbb{B}_{\rm dR}^+, \mathbb{B}_{\rm dR}, \mathbb{B}_I, \mathbb{B}, \mathbb{B}_I[1/t], \mathbb{B}[1/t] \} \]
($I \subset ]0,1[$ is a compact interval). We fix once and for all an injective resolution $\mathcal{I}_{\mathbb{M}}^{\bullet}$ of the pro-\'etale sheaf $\mathbb{M}$ viewed as an element of the pro-\'etale topos of the category $\mathrm{RigMot}$.  

\begin{definition}
Let $\mathcal{F}_{\mathbb{M}}$ be the complex of \'etale sheaves on the category of overconvergent affinoid spaces over $C$, such that for $Z$ overconvergent affinoid, $\mathcal{F}_{\mathbb{M},Z}$ corresponds to the inverse system
\[ \underset{h} \varinjlim ~ \eta_t \nu'_* \mathcal{I}_{\mathbb{M},Z_h}^{\bullet}, \]
where $\varprojlim Z_h$ is a presentation of the overconvergent structure $Z$ on $\hat{Z}$, through the equivalence of Proposition \ref{a22}. 
\end{definition}

\begin{proposition} \label{factormotives}
The complex of \'etale sheaves $\mathcal{F}_{\mathbb{M}}$ is $\mathbf{D}_C^{\dagger}$-local. 
\end{proposition}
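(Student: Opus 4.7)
The plan is to deduce $\mathbf{D}_C^{\dagger}$-locality as essentially a repackaging of Proposition~\ref{contractile} through the description of overconvergent affinoids from Proposition~\ref{a22}. Saying that $\mathcal{F}_{\mathbb{M}}$ is $\mathbf{D}_C^{\dagger}$-local amounts to showing that for every smooth overconvergent affinoid $Z$ over $C$, the projection $Z \times \mathbf{D}_C^{\dagger} \to Z$ induces a quasi-isomorphism $\mathcal{F}_{\mathbb{M}}(Z) \simeq \mathcal{F}_{\mathbb{M}}(Z \times \mathbf{D}_C^{\dagger})$. Since this is a local statement with respect to the étale topology on overconvergent affinoids and since $\mathcal{F}_{\mathbb{M}}$ is defined by sheafifying a formula involving pro-étale cohomology, we may reduce to the case where $\hat{Z}$ admits a chart $\square \colon \hat{Z} \to \mathbf{T}^d$ factoring as a composite of rational embeddings and finite étale maps, as in the hypothesis of Proposition~\ref{contractile}.

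First I would fix a presentation $\varprojlim Z_h$ of the overconvergent structure on $\hat{Z}$, provided by Proposition~\ref{a22}; shrinking $Z_1$ if needed, each $Z_h$ inherits from $\square$ an analogous chart. The overconvergent disk $\mathbf{D}_C^{\dagger}$ admits the presentation $\varprojlim \mathbf{D}_{n,C}$, where $\mathbf{D}_{n,C}$ is the closed disk of radius $p^{1/n}$; and the product $Z \times \mathbf{D}_C^{\dagger}$ corresponds under Proposition~\ref{a22} to the presentation $\varprojlim_{h,n} (Z_h \times \mathbf{D}_{n,C})$. Unwinding the definition of $\mathcal{F}_{\mathbb{M}}$ then gives
\[
\mathcal{F}_{\mathbb{M}}(Z \times \mathbf{D}_C^{\dagger}) \simeq \underset{h,n}\varinjlim ~ R\Gamma_{\mathrm{\acute{e}t}}(Z_h \times \mathbf{D}_{n,C}, L\eta_t R\nu'_* \mathbb{M}).
\]

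Next, for each fixed $h$, Proposition~\ref{contractile} applied to $Z_h$ with its induced chart yields
\[
\underset{n}\varinjlim ~ R\Gamma_{\mathrm{\acute{e}t}}(Z_h \times \mathbf{D}_{n,C}, L\eta_t R\nu'_* \mathbb{M}) \simeq R\Gamma_{\mathrm{\acute{e}t}}(Z_h, L\eta_t R\nu'_* \mathbb{M}),
\]
induced by the projection. Since filtered colimits commute among themselves, pulling the colimit in $h$ to the outside produces
\[
\mathcal{F}_{\mathbb{M}}(Z \times \mathbf{D}_C^{\dagger}) \simeq \underset{h}\varinjlim ~ R\Gamma_{\mathrm{\acute{e}t}}(Z_h, L\eta_t R\nu'_* \mathbb{M}) = \mathcal{F}_{\mathbb{M}}(Z),
\]
and by construction this identification is the one induced by the projection $Z \times \mathbf{D}_C^{\dagger} \to Z$.

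The real difficulty in this proof is hidden in Proposition~\ref{contractile}, which has already been established. Once it is in hand, the present statement is a bookkeeping exercise involving the compatibility of the presentations from Proposition~\ref{a22} with products and with the defining class $\mathcal{S}_{\mathbf{D}^{\dagger}}$ of overconvergent homotopies. The only subtle point to verify is that the reduction to $Z$ admitting a good chart is legitimate at the sheaf-theoretic level; this is fine because any smooth affinoid admits such charts étale locally, and $\mathbf{D}_C^{\dagger}$-locality for a complex of étale sheaves can be tested on a basis for the étale topology of overconvergent smooth affinoids.
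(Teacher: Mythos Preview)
Your proposal is correct and follows essentially the same approach as the paper: reduce to a small overconvergent affinoid $Z$ with a chart, unwind $\mathcal{F}_{\mathbb{M}}(Z \times \mathbf{D}_C^{\dagger})$ as the double colimit $\varinjlim_{h,n} R\Gamma_{\mathrm{\acute{e}t}}(Z_h \times \mathbf{D}_{n,C}, L\eta_t R\nu'_* \mathbb{M})$, and then apply Proposition~\ref{contractile} for each fixed $h$ before passing to the colimit in $h$. Your write-up is somewhat more explicit about the \'etale-local reduction and the compatibility of presentations with products, but the argument is the same.
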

\begin{proof}
Let $\varprojlim \mathbf{D}_h$ be the standard overconvergent presentation of the dagger structure $\mathbf{D}^{\dagger}$ on $\mathbf{D}$ by closed disks $\mathbf{D}_h$ of radius $p^{1/h}$. Let $Z$ be quasi-compact smooth rigid space over $C$. We need to check that for all $i$, the natural map 
\[ H^i(Z, \mathcal{F}_{\mathbb{M},Z}) \to H^i(Z \times \mathbf{D}_C^{\dagger}, \mathcal{F}_{\mathbb{M},Z \times \mathbf{D}_C^{\dagger}}), \]
We can assume that $Z$ is small and endowed with an overconvergent presentation $\varprojlim Z_h$. We have to prove that the map
\[ \varinjlim_h H^i(Z_{h}, L\eta_t R\nu'_* \mathbb{M}_{Z_h} ) \to \varinjlim_h \varinjlim_{h'} H^i(Z_{h} \times \mathbf{D}_{h',C}, L\eta_t R\nu'_* \mathbb{M}_{Z_h \times \mathbf{D}_{h',C}} ) \]
is an isomorphism. It suffices to prove it before taking the direct limit over $h$. But this is precisely what Proposition \ref{contractile} tells us.
\end{proof}

Let $\mathcal{F}_{\mathbb{M}}^{\rm mot}$ be the motive of $\mathcal{F}_{\mathbb{M}}$ viewed as a complex of presheaves. As the localization over $\mathcal{S}_{\mathrm{\acute{e}t}}$ of the category of complex of presheaves on $\mathrm{AffSm}^{\dagger}$ is equivalent to the localization over $\mathcal{S}_{\mathrm{\acute{e}t}}$ of the category of complex of presheaves on $\mathrm{RigSm}^{\dagger}$, $\mathcal{F}_{\mathbb{M}}^{\rm mot}$ can and will be seen as an object in $\mathrm{RigMot}$. Proposition \ref{factormotives} implies that for any smooth affinoid dagger variety $Z$,
\begin{eqnarray} \mathrm{Hom}_{\mathrm{RigMot}^{\dagger}}(Z, \mathcal{F}_{\mathbb{M}}^{\rm mot}) = \underset{h} \varinjlim ~ R\Gamma_{\mathrm{\acute{e}t}}(Z_h, L\eta_t R\nu'_* \mathbb{M}), \label{memechose} \end{eqnarray}
for any presentation $\varprojlim Z_h$ of the dagger structure $Z$ on $\hat{Z}$.

\begin{definition}
Let $\mathcal{C} \in \mathrm{RigMot}$. We define, for $\mathbb{M}$ as before and $i\geq 0$,
\[ R\Gamma(\mathcal{C},\mathbb{M})^{\dagger} := \mathrm{Hom}_{\mathrm{RigMot}^{\dagger}}(R\ell_* \mathcal{C} , \mathcal{F}_{\mathbb{M}}^{\rm mot}). \]
To simplify the notation, if $Z \in \mathrm{RigSm}$, $R\Gamma(\qp(Z),\mathbb{M})^{\dagger}$ will simply be denoted by $R\Gamma(Z,\mathbb{M})^{\dagger}$.
\end{definition}

\begin{remarque}
Any complex of presheaves on the category $\mathrm{RigSm}^{\dagger}$ defines a motive. But if one doesn't know that it is a complex of \'etale sheaves and that the \'etale hypercohomology of the complex on $\mathbf{D}_C^{\dagger}$ is trivial, it is hard to describe the functor represented by this motive in terms of the original complex, as in \eqref{memechose}. In particular, it would be impossible to prove the following proposition, which shows that the above definition is reasonable : it is an extension to all smooth quasi-compact rigid spaces (and even to motives) of the usual cohomology of the complex $L\eta_t R\nu'_* \mathbb{M}$ on smooth \textit{proper} varieties.
\end{remarque}

\begin{proposition} \label{propre\'egal}
If $Z$ is a smooth proper rigid variety over $C$, 
\[ R\Gamma(Z,\mathbb{M})^{\dagger} \simeq R\Gamma_{\mathrm{\acute{e}t}}(Z, L\eta_t R\nu'_* \mathbb{M}). \]
\end{proposition}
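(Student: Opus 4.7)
The plan is to reduce to the affinoid case by Čech descent, exploiting the fact that a proper rigid space admits a canonical dagger structure.

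\textbf{Step 1 (canonical dagger structure).} Since $Z$ is proper, any affinoid open $U \subset Z$ admits a natural presentation of a dagger structure by a cofinal system of rational subspaces $U \Subset U_h \subset Z$; this organises into a canonical dagger lift $Z^{\dagger} \in \mathrm{RigSm}^{\dagger}$ of $Z$. Under the equivalence of Theorem \ref{vezza}, $R\ell_* \qp(Z)$ is identified with $\qp(Z^{\dagger})$. Fix a finite affinoid cover $\{U_i\}_{i\in I}$ of $Z$ together with dagger presentations $\{U_{i,h}\}_h$ as above; set $U_J = \bigcap_{i\in J} U_i$ and $U_{J,h} = \bigcap_{i\in J} U_{i,h}$ for $\emptyset \neq J \subset I$. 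The Čech nerve yields a resolution of $\qp(Z^{\dagger})$ in $\mathrm{RigMot}^{\dagger}$.

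\textbf{Step 2 (Čech on both sides).} Applying $\mathrm{Hom}_{\mathrm{RigMot}^{\dagger}}(-,\mathcal{F}_{\mathbb{M}}^{\rm mot})$ to this Čech resolution and using the explicit formula \eqref{memechose} on each affinoid piece gives
\[ R\Gamma(Z,\mathbb{M})^{\dagger} \simeq \mathrm{Tot}\Bigl( \varinjlim_h R\Gamma_{\mathrm{\acute{e}t}}(U_{J,h}, L\eta_t R\nu'_* \mathbb{M}) \Bigr)_{\emptyset \neq J \subset I}. \]
On the other side, ordinary étale Čech descent for the complex $L\eta_t R\nu'_* \mathbb{M}$ yields
\[ R\Gamma_{\mathrm{\acute{e}t}}(Z, L\eta_t R\nu'_* \mathbb{M}) \simeq \mathrm{Tot}\bigl( R\Gamma_{\mathrm{\acute{e}t}}(U_J, L\eta_t R\nu'_* \mathbb{M}) \bigr)_{\emptyset \neq J \subset I}. \]
The proposition thus reduces to producing, for each $J$, a compatible quasi-isomorphism
\[ \varinjlim_h R\Gamma_{\mathrm{\acute{e}t}}(U_{J,h}, L\eta_t R\nu'_* \mathbb{M}) \simeq R\Gamma_{\mathrm{\acute{e}t}}(U_J, L\eta_t R\nu'_* \mathbb{M}). \tag{$\ast$} \]

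\textbf{Step 3 (continuity).} Since each $U_{J,h}$ and $U_J$ is smooth affinoid, Proposition \ref{commutation} identifies both sides of $(\ast)$ with $L\eta_t R\Gamma_{\mathrm{pro\acute{e}t}}(-,\mathbb{M})$; as $L\eta_t$ commutes with filtered colimits, one is reduced to
\[ \varinjlim_h R\Gamma_{\mathrm{pro\acute{e}t}}(U_{J,h}, \mathbb{M}) \simeq R\Gamma_{\mathrm{pro\acute{e}t}}(U_J, \mathbb{M}). \]
After refining the cover, each $U_J$ may be assumed to admit a toric chart $U_J \to \mathbf{T}^d$ extending to charts on the $U_{J,h}$ for $h$ large; the associated perfectoid pro-étale covers then compute both sides as continuous-cochain/Koszul complexes for $\zp^d$ acting on $\mathbb{M}$ evaluated at these covers (as in the proof of Lemma \ref{boule}), and the direct limit of the Banach algebras $\mathbb{M}(U_{J,h,\infty})$ recovers $\mathbb{M}(U_{J,\infty})$ because the underlying perfectoid Tate algebras do.

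\textbf{Main obstacle.} The technical heart of the argument is the continuity step $(\ast)$: it is \emph{not} true that pro-étale cohomology of an arbitrary sheaf is continuous under shrinking affinoid opens, and what rescues us is the specific structure of the period sheaves $\mathbb{M}$ (computable via Koszul complexes for toric charts) together with the properness of $Z$ ensuring that all the relevant enlargements $U_{J,h}$ actually exist inside $Z$. All other steps — the existence of a canonical dagger lift, Čech descent in $\mathrm{RigMot}^{\dagger}$, and the translation via \eqref{memechose} — are essentially formal once Step 1 is in place.
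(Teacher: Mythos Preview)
Your reduction to $(\ast)$ is where the argument breaks. The claim
\[ \varinjlim_h R\Gamma_{\mathrm{\acute{e}t}}(U_{J,h}, L\eta_t R\nu'_* \mathbb{M}) \simeq R\Gamma_{\mathrm{\acute{e}t}}(U_J, L\eta_t R\nu'_* \mathbb{M}) \]
is simply false for a general smooth affinoid $U_J$: the left-hand side is the \emph{overconvergent} invariant, the right-hand side the convergent one, and these differ already for the closed unit disk. Concretely, by Remark \ref{egalomega} (or the computation in Lemma \ref{boule}), after $L\eta_t$ the complex on a small affinoid with algebra $R$ becomes the de Rham complex $\Omega_{B_I(R)/B_I}^{\bullet}$; on the closed disk this has $H^1\neq 0$, whereas the overconvergent version has $H^1=0$. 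Your Step 3 justification (``the direct limit of the Banach algebras $\mathbb{M}(U_{J,h,\infty})$ recovers $\mathbb{M}(U_{J,\infty})$'') is equally false: that direct limit is only dense, giving overconvergent rather than convergent sections.

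The paper's proof sidesteps $(\ast)$ entirely by a trick you are missing: for each $h$, the family $(U_{i,h})_{i\in I}$ is \emph{still a covering of $Z$} (since $U_i\subset U_{i,h}\subset Z$). Hence the \v{C}ech spectral sequence $E_1^{p,q}(U_h)$ built from the $U_{J,h}$ already converges to $H_{\mathrm{\acute{e}t}}^{p+q}(Z, L\eta_t R\nu'_*\mathbb{M})$, with abutment independent of $h$. Passing to the filtered colimit over $h$ keeps the abutment fixed while the $E_1$-page becomes, by \eqref{memechose}, the \v{C}ech complex computing $\mathrm{Hom}_{\mathrm{RigMot}^{\dagger}}(Z^{\dagger},\mathcal{F}_{\mathbb{M}}^{\rm mot})$. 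So one never compares overconvergent and convergent cohomology on the individual pieces $U_J$; one only uses that different affinoid covers of the \emph{same} proper $Z$ give the same global cohomology. Properness enters precisely in guaranteeing that the enlargements $U_{i,h}$ stay inside $Z$ (via the auxiliary cover $V$ with $U_i\Subset V_i$), not in any continuity statement.
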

\begin{proof}
Indeed, one has
\[ R\Gamma(Z, \mathbb{M})^{\dagger} = R\mathrm{Hom}_{\mathrm{RigMot}^{\dagger}}(R\ell_* Z , \mathcal{F}_{\mathbb{M}}) = R\mathrm{Hom}_{\mathrm{RigMot}^{\dagger}}(Z^{\dagger}, \mathcal{F}_{\mathbb{M}}) \]
by Theorem \ref{vezza} (or rather its proof, which tells us that the motive $R\ell_* Z$ is isomorphic to the motive $Z^{\dagger}$, for any choice of a dagger structure $Z^{\dagger}$ on $Z$). Hence we have to show that :
\[  R\mathrm{Hom}_{\mathrm{RigMot}^{\dagger}}(Z^{\dagger}, \mathcal{F}_{\mathbb{M}}) = R\Gamma_{\mathrm{\acute{e}t}}(Z, L\eta_t R\nu'_* \mathbb{M}). \]
To show this, note that as $Z$ is proper, we can fin two finite affinoid coverings $U=(U_i)_{i \in I}$ and $V=(V_i)_{i\in I}$, such that for all $i \in I$, $U_i \Subset V_i$. We can assume that the covering $U$ comes from a covering $U^{\dagger} = (U_i^{\dagger})_{i\in I}$ of $Z^{\dagger}$. We have a Cech spectral sequence
\[ E_1^{p,q}(U) := \bigoplus_{|J|=p+1} H_{\mathrm{\acute{e}t}}^{q}(U_{J}, L\eta_t R\nu'_* \mathbb{M}) \Longrightarrow H_{\mathrm{\acute{e}t}}^{p+q}(Z, L\eta_t R\nu'_* \mathbb{M}), \]
and similarly for the covering $V$. For each $i$, we can choose a presentation $\varprojlim U_{i,h}$ of the dagger structure $U_i^{\dagger}$ on $U_i$ such that $U_{i,h} \Subset V_i$ for all $h$. We have maps of spectral sequences $E_1^{p,q}(U_{h}) \to E_1^{p,q}(U_{h+1}) \to E_1^{p,q}(U)$ for all $h$, which induce isomorphisms on the abutments. But
\[ \underset{h} \varinjlim ~ E_1^{p,q}(U_h) \Longrightarrow \mathrm{Hom}_{\mathrm{RigMot}^{\dagger}}(Z^{\dagger}, \mathcal{F}_{M}^{\rm mot}), \]
because by \eqref{memechose},
\[ \underset{h} \varinjlim ~ E_1^{p,q}(U_h) = \bigoplus_{|J|=p+1} \mathrm{Hom}_{\mathrm{RigMot}^{\dagger}}(U_J^{\dagger}, \mathcal{F}_{\mathbb{M}}^{\rm mot}). \]
Whence the desired isomorphism.
\end{proof}

\section{Construction of the cohomology theory $\mathcal{FF}$} \label{Construction}

In all this section, we will define the cohomology theory $\mathcal{FF}$ with values in $D^b(\mathrm{Coh}_{X_{C^{\flat}}})$ (Definition \ref{defff}) and show that its cohomology groups are vector bundles when $C$ is the completion of an algebraic closure of $W(k)[1/p]$ (Theorem \ref{vb}). We fix once and for all a smooth quasi-compact rigid space $Z$ over $K$\footnote{As the notation may be confusing, let us recall and insist on the fact that $K$ is not, unless explicitly mentioned, assumed to be discretely valued ; for example, we can take $K=C$.} of dimension $n$.

Let us recall that the algebra $B$ is a Fr\'echet-Stein algebra. A coherent sheaf on $X_{C^{\flat}}$ is thus the same thing as a family $(M_I)_I$ of finite type $B_I$-modules, for any compact interval $I=[a,b]$ with rational ends and $a \leq b/p$, endowed with an isomorphism
\[ \varphi^* M_I \otimes_{B_{[a/p,b/p]}} B_{[a,b/p]} \simeq M_I \otimes_{B_I} B_{[a,b/p]}, \]
and such that there are isomorphisms $M_{I'} \otimes_{B_{I'}} B_I \simeq M_I$ whenever $I \subset I'$, satisfying the obvious cocycle condition. To such a coherent sheaf $(M_I)_I$, one associates the $\varphi$-module over $B$
\[ \Gamma(M_I)_I) = \underset{I} \varprojlim ~ M_I. \]
This global sections functor induces an equivalence between the category of coherent sheaves on $Y_{C^{\flat}}$ and its image, which is by definition the category of \textit{coadmissible $\varphi$-modules over $B$}. This category is abelian and stable by extensions.  

\begin{proposition} \label{B_I}
Let $i \geq 0$. The $\varphi$-module $H^i(Z_C,\mathbb{B})^{\dagger}$ over $B$ is a coadmissible $\varphi$-module over $B$.
\end{proposition}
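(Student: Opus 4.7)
The plan is a motivic reduction to the smooth proper case, where the conclusion follows from the pro-étale computation of Proposition \ref{proprepro\'et}.

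Recall that a coadmissible $\varphi$-module over $B$ is equivalent to the data of a coherent sheaf on $X_{C^{\flat}}$, namely a finite-type $B_I$-module $M_I$ for every compact interval $I \subset ]0,1[$ with rational endpoints, endowed with restriction isomorphisms $M_{I'} \otimes_{B_{I'}} B_I \simeq M_I$ for $I \subset I'$ and a Frobenius identification. It therefore suffices to establish three claims: (a) each $H^i(Z_C, \mathbb{B}_I)^{\dagger}$ is finite-type over the Noetherian Banach algebra $B_I$; (b) the natural restriction and Frobenius maps assemble these modules into a coherent sheaf on $X_{C^{\flat}}$; (c) passing to global sections recovers $H^i(Z_C, \mathbb{B})^{\dagger}$.

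For (a), the decisive point is that by Proposition \ref{factormotives} the assignment $\mathcal{C} \mapsto R\Gamma(\mathcal{C}, \mathbb{B}_I)^{\dagger}$ is well-defined on the triangulated category $\mathrm{RigMot}$. Since $Z$ is smooth and quasi-compact, $\qp(Z)$ is a compact motive, and Theorem \ref{ayoub} exhibits it as an iterated cone/shift/retract of motives of smooth proper rigid varieties over $C$. The category of finite-type $B_I$-modules is thick inside $D^b(B_I)$ (closed under cones, shifts, and retracts), so it is enough to handle the case of a smooth proper $W$. There, combining Propositions \ref{propre\'egal}, \ref{commutation}, and \ref{proprepro\'et} yields
\[ R\Gamma(W, \mathbb{B}_I)^{\dagger} \simeq L\eta_t\bigl( R\Gamma_{\mathrm{\acute{e}t}}(W, \qp) \otimes_{\qp} B_I \bigr). \]
Since $H_{\mathrm{\acute{e}t}}^*(W, \qp)$ is finite-dimensional, the right-hand side is $L\eta_t$ of a perfect complex of $B_I$-modules; representing it by a bounded complex of finite free $B_I$-modules (in particular $t$-torsion-free), the $\eta_t$-complex has terms sitting inside finite free modules, whence finite-type cohomology by Noetherianness of $B_I$.

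For (b), the transition and Frobenius maps are induced by functoriality from the corresponding natural maps on the sheaves $\mathbb{B}_I$, so the required compatibilities are automatic. For (c), Lemma \ref{44} identifies $\mathcal{F}_{\mathbb{B}}^{\rm mot}$ with the homotopy limit of the $\mathcal{F}_{\mathbb{B}_I}^{\rm mot}$, and $\mathrm{Hom}$ commutes with homotopy limits in the target, giving $R\Gamma(Z_C, \mathbb{B})^{\dagger} \simeq R\varprojlim_I R\Gamma(Z_C, \mathbb{B}_I)^{\dagger}$; to recover the statement on $H^i$ one verifies the Mittag-Leffler vanishing of $R^1\varprojlim_I H^{i-1}(Z_C, \mathbb{B}_I)^{\dagger}$, which follows from the density of the transition maps $B_{I'} \to B_I$ (transported through the proper-case description via the motivic reduction). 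The main obstacle is convincing oneself that the motivic reduction is legitimate, i.e.\ that Hom from any compact motive into $\mathcal{F}_{\mathbb{B}_I}^{\rm mot}$ lands in finite-type $B_I$-modules; granted this, the coherence structure and the passage from each $\mathbb{B}_I$ to $\mathbb{B}$ are essentially formal, and the result is a bookkeeping exercise combining Noetherianness of $B_I$ with the good behaviour of $L\eta_t$ on perfect complexes.
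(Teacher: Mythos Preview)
Your proposal is correct and follows essentially the same route as the paper: reduce, via Theorem \ref{ayoub} and the fact that all the functors in play factor through $\mathrm{RigMot}$, to the smooth proper case, where Proposition \ref{proprepro\'et} (together with Proposition \ref{propre\'egal}) gives the finiteness and the coherent-sheaf compatibilities; then pass from the $\mathbb{B}_I$ to $\mathbb{B}$ via Lemma \ref{44} and observe that the $R^1\varprojlim$ vanishes because the system $(H^{i-1}(Z_C,\mathbb{B}_I)^{\dagger})_I$ has already been shown to be coherent. The only cosmetic difference is that the paper handles $L\eta_t$ in the proper case by invoking Lemma \ref{induction} (reducing to the undecorated pro-\'etale cohomology and copies of $\widehat{\O}$), whereas you argue directly that $\eta_t$ of a perfect $B_I$-complex stays perfect; both are fine.
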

\begin{proof}
Set, for any compact interval $I$, $M_I=H^i(Z_C,\mathbb{B}_I)^{\dagger}$. We claim that the family $(M_I)_I$ defines a coherent sheaf on $X_{C^{\flat}}$ and that :
\[ H^i(Z_C,\mathbb{B})^{\dagger} = \underset{I} \varprojlim ~ M_I. \]
To check that it defines a coherent sheaf, we can reduce to the case where $Z$ is proper smooth, by Theorem \ref{ayoub}, as all the objects involved factor through the category of motives. Again, we apply Lemma \ref{induction} and the fact that $\mathbb{B}_I/t$ is isomorphic to a finite number of copies of $\widehat{\O}$ indexed by the zeroes of $t$ in the annulus of radius $I$. As $\varphi(t)=pt$ and $p$ is invertible, the only non trivial thing to check is that the family $(N_I)_I$, with $N_I=H_{\mathrm{pro\acute{e}t}}^i(Z_C,\mathbb{B}_I)$ defines a coherent sheaf on $X_{C^{\flat}}$. This is a direct consequence of Proposition \ref{proprepro\'et}. (Another argument for the finiteness of $H_{\mathrm{pro\acute{e}t}}^i(Z_C, \mathbb{B})$, which does not rely on Faltings' comparison result, is given in \cite[Th. 8.1]{KL} : this roughly amounts to do a \og Cartan-Serre argument \fg{} directly for $\mathbb{B}$, in the spirit of the one of \cite{SHodge} for the constant sheaf $\qp$.) 

Finally, by Lemma \ref{44}, we have a short exact sequence
\[ 0 \to R^1 \underset{I} \varprojlim ~ H^{i-1}(Z_C,\mathbb{B}_I)^{\dagger} \to H^{i}(Z_C,\mathbb{B})^{\dagger} \to   \underset{I} \varprojlim ~ H^{i}(Z_C,\mathbb{B}_I)^{\dagger} \to 0. \]
But we have just seen that $H^{i-1}(Z_C,\mathbb{B}_{I'})^{\dagger} \otimes_{B_{I'}} B_I \simeq H^{i-1}(Z_C,\mathbb{B}_I)^{\dagger}$ whenever $I \subset I'$. Therefore the derived inverse limit on the left vanishes, and thus we indeed have
 \[ H^i(Z_C,\mathbb{B})^{\dagger} = \underset{I} \varprojlim ~ M_I, \]
as desired. 
\end{proof}

\begin{remarque}
When $Z$ has dimension $1$, one can give another argument which does not appeal to Theorem \ref{ayoub}. One can assume $Z$ to be affinoid. Then one can always write (\cite{vdp})
\[ Z_C = Z' ~ \backslash ~ \left(\bigcup_{k=1}^r D_k(r_k) \right), \]
where $Z'$ is a smooth proper curve and the $D_k(r_k)$ are open disks of radius $r_k$ (the circle of same origin and radius as $D_k(r_k)$ will be called $C_k$). Let for $n$ big enough :
\[ Z_n = Z' ~ \backslash ~ \left(\bigcup_{k=1}^r D_k(r_k-1/n) \right), \]
We get from this incision-excision triangles 
\[ \underset{n} \varinjlim ~ R\Gamma_{Z_n}(Z', L\eta_t R\nu'_* \mathbb{B}) \to  R\Gamma(Z_C,\mathbb{B})^{\dagger} \to \bigoplus_{k=1}^r R\Gamma(C_k, \mathbb{B})^{\dagger} \overset{+1} \longrightarrow  \]
and 
\[ \underset{n} \varinjlim ~ R\Gamma_{Z_n}(Z', L\eta_t R\nu'_* \mathbb{B}) \to R\Gamma(Z', L\eta_t R\nu'_* \mathbb{B}) \to \bigoplus_{k=1}^r R\Gamma(D_k(r_k), L\eta_t R\nu'_* \mathbb{B}) \overset{+1} \longrightarrow  \]
(see the proof of \cite[Th. 2.8.28]{theseaclb}). So to show that the cohomology groups of $R\Gamma(Z_C,\mathbb{B})^{\dagger}$ are coadmissible, we are reduced to prove that the same is true for the cohomology groups of $R\Gamma(Z', L\eta_t R\nu'_* \mathbb{B})$, $R\Gamma(D_k(r_k), L\eta_t R\nu'_* \mathbb{B})$ and $R\Gamma(C_k, \mathbb{B})^{\dagger}$: for the first ones, we can argue as in the proof of the proposition ; the second ones were already computed (in disguise) in Proposition \ref{contractile} (cf. also \cite[Cor. 2.3.30]{theseaclb}) ; for the last ones, see \cite[Cor 2.3.29]{theseaclb}. 
\end{remarque}

\begin{definition} \label{defff}
Let $\mathcal{FF}(Z) \in D^b(\mathrm{Coh}_{X_{C^{\flat}}})$ be the complex of coherent sheaves on $X_{C^{\flat}}$ corresponding to the complex of $\varphi$-equivariant coadmissible modules $R\Gamma(Z_C,\mathbb{B})^{\dagger}$ over $B$ (we used the fact that on a noetherian scheme $S$, the bounded derived category of coherent sheaves identifies with the full subcategory of the bounded derived category of $\O_S$-modules formed by complexes whose cohomology sheaves are coherent, plus GAGA theorem for the Fargues-Fontaine curve).

For any $i\geq 0$, let
\[ H_{\mathcal{FF}}^i(Z) = H^i(\mathcal{FF}(Z)), \]
which is a coherent sheaf on $X_{C^{\flat}}$.
\end{definition}

\begin{remarque}
For any $i\geq 0$, the coherent sheaf $H_{\mathcal{FF}}^i(Z)$ is obviously $\mathcal{G}_K$-equivariant\footnote{Any element of $\mathrm{Aut}(C)$ acts on $X_{C^{\flat}}$ ; in particular $\mathcal{G}_K$ acts.}. All the identifications made below will be compatible with the $\mathcal{G}_K$-action, even if we will not explicitely state it. This equivariant structure will be only used in section \ref{relation}, when $K$ is discretely valued and $Z$ proper and smooth.
\end{remarque}  

\begin{remarque}
If one is just interested in defining individual cohomology groups instead of a complex, one can define $H_{\mathcal{FF}}^i(Z)$ to be the coherent sheaf on $X_{C^{\flat}}$ corresponding to the pair $(H^i(Z_C, \mathbb{B}_e)^{\dagger},H^i(Z_C,\mathbb{B}_{\rm dR}^+)^{\dagger})$. One of course has to check that the natural map
\[ H^i(Z_C, \mathbb{B}_e)^{\dagger} \otimes_{B_e} B_{\rm dR} \to H^i(Z_C,\mathbb{B}_{\rm dR})^{\dagger} \]
is a quasi-isomorphism but as both sides factor through the category of motives $\mathrm{RigMot}$, using Theorem \ref{ayoub} once again, it is enough to show that the above map is an isomorphism for $Z$ proper and smooth. This is a direct consequence of Proposition \ref{proprepro\'et}.

The next proposition will show that this gives the same groups, and it is this description that we will need to prove that we always get vector bundles when $C$ is the completion of an algebraic closure of $W(k)[1/p]$.
\end{remarque}


\begin{proposition} \label{modeleformel}
The natural map
\[ R\Gamma(Z_C,\mathbb{B})^{\dagger} \otimes_B B_{\rm dR}^+ \simeq R\Gamma(Z_C, \mathbb{B}_{\rm dR}^+)^{\dagger} \]
is a quasi-isomorphism.
\end{proposition}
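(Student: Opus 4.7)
The strategy is the standard motivic reduction used throughout the paper: both sides are expected to factor through the category of compact rigid motives, so by Theorem \ref{ayoub} it suffices to verify the quasi-isomorphism when $Z$ is smooth and proper, where Proposition \ref{proprepro\'et} gives explicit formulas.

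First I would observe that both $\mathcal{C} \mapsto R\Gamma(\mathcal{C},\mathbb{B})^{\dagger} \otimes_B B_{\rm dR}^+$ and $\mathcal{C} \mapsto R\Gamma(\mathcal{C},\mathbb{B}_{\rm dR}^+)^{\dagger}$ are functors on $\mathrm{RigMot}^{\rm comp}$ with values in the derived category of $B_{\rm dR}^+$-modules: the first because $R\Gamma(\cdot,\mathbb{B})^{\dagger} := \mathrm{Hom}_{\mathrm{RigMot}^{\dagger}}(R\ell_*(\cdot),\mathcal{F}_{\mathbb{B}}^{\rm mot})$ is motivic by construction and tensoring with a fixed $B$-algebra is applied externally, and the second directly by the same definition. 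The natural comparison map, coming from the map of pro-\'etale sheaves $\mathbb{B} \to \mathbb{B}_{\rm dR}^+$, induces a natural transformation between these two motivic functors. Since both source motives we care about lie in $\mathrm{RigMot}^{\rm comp}$ (as $Z$ is smooth quasi-compact), Theorem \ref{ayoub} reduces the claim to the case where $Z$ is the analytification of a smooth projective variety, or more generally smooth proper.

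For $Z$ smooth proper, Proposition \ref{propre\'egal} gives $R\Gamma(Z,\mathbb{M})^{\dagger} \simeq R\Gamma_{\mathrm{\acute{e}t}}(Z_C, L\eta_t R\nu'_* \mathbb{M})$ for $\mathbb{M} \in \{\mathbb{B}, \mathbb{B}_{\rm dR}^+\}$. Combining Proposition \ref{commutation} (applied on an affinoid cover and glued via a \v{C}ech spectral sequence, as in the proof of \ref{propre\'egal}) with the explicit computation of Proposition \ref{proprepro\'et}, I would identify both sides with $L\eta_t\bigl(R\Gamma_{\mathrm{\acute{e}t}}(Z_C,\qp) \otimes_{\qp} M\bigr)$ for $M = B$ or $M = B_{\rm dR}^+$. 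Since $\qp$ is a field and $R\Gamma_{\mathrm{\acute{e}t}}(Z_C,\qp)$ is a perfect complex of finite-dimensional $\qp$-vector spaces (Proposition \ref{proprepro\'et}), I can represent it as $\bigoplus_i V_i[-i]$ with each $V_i$ finite-dimensional. Then $L\eta_t$ decomposes accordingly in each degree (since $\eta_t$ on a complex concentrated in degree $i$ with no differential is simply multiplication by $t^i$), and the comparison map reduces on each summand to the tautological $(V_i \otimes_{\qp} B) \otimes_B B_{\rm dR}^+ \simeq V_i \otimes_{\qp} B_{\rm dR}^+$.

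The main obstacle I anticipate is justifying that the operation $\otimes_B B_{\rm dR}^+$ genuinely defines a functor from $\mathrm{RigMot}^{\rm comp}$ to the derived category of $B_{\rm dR}^+$-modules (so that the motivic reduction is applicable). One clean way around this is to reformulate the tensor product at the level of coherent sheaves on $X_{C^{\flat}}$: by Proposition \ref{B_I} the cohomologies of $R\Gamma(Z_C,\mathbb{B})^{\dagger}$ are coadmissible $\varphi$-modules, hence correspond to coherent sheaves on the curve, and tensoring with $B_{\rm dR}^+$ amounts to taking the completed stalk at the point $\infty \in X_{C^{\flat}}$, which is manifestly a well-defined functor preserving triangles.
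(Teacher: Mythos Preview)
Your proposal is correct and follows essentially the same route as the paper, whose proof consists of the single sentence ``This is proved exactly as in the proof of Proposition \ref{B_I}'': namely, both sides are motivic, so by Theorem \ref{ayoub} one reduces to the smooth proper case, where Proposition \ref{proprepro\'et} (combined with Proposition \ref{propre\'egal}) gives the explicit identification. Your final paragraph, interpreting $\otimes_B B_{\rm dR}^+$ as the completed stalk at $\infty$ on $X_{C^{\flat}}$ so that it is an exact functor on the coadmissible modules produced by Proposition \ref{B_I}, is a useful clarification of a point the paper leaves implicit.
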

\begin{proof}
This is proved exactly as in the proof of Proposition \ref{B_I}.
\end{proof}

\begin{remarque}
This implies in particular, by Proposition \ref{B_I}, that $H^i(Z_C, \mathbb{B}_{\rm dR}^+)^{\dagger}$ is a $B_{\rm dR}^+$-module of finite type. This can be seen directly, as follows : by Theorem \ref{ayoub}, it suffices to treat the case where $Z$ is smooth and proper, in which case we can invoke \cite[Th. 13.8]{BMS}. Proposition \ref{bplusdr} below gives another argument, when $C$ is the completion of an algebraic closure of $W(k)$.
\end{remarque}

\begin{lemme} \label{chap13}
Assume that $C$ is the completion of an algebraic closure of $W(k)[1/p]$, and let $F$ be a finite extension of $W(k)[1/p]$. Let $\mathcal{C}$ be a compact rigid motive over $F$. One has for all $i$ an isomorphism 
\[ H^i(\mathcal{C}_C,\mathbb{B}_{\rm dR}^+)^{\dagger} \simeq H_{\rm dR}^i(\mathcal{C}/F)^{\dagger} \otimes_F B_{\rm dR}^+, \]
where $H_{\rm dR}^i(\mathcal{C}/F)^{\dagger}$ is the overconvergent de Rham cohomology of $\mathcal{C}$, as defined in \cite{Vezzani}. 
\end{lemme}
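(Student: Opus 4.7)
The plan is to reduce both sides to functors of the motive $\mathcal{C}$ and then use the generating set given by Theorem \ref{ayoub}. First I would observe that both sides, viewed as functors of $\mathcal{C} \in \mathrm{RigMot}_F^{\rm comp}$ taking values in $B_{\rm dR}^+$-modules, are well defined. The left-hand side factors through motives by Proposition \ref{factormotives}: after base change along $F \hookrightarrow C$, one has
\[ H^i(\mathcal{C}_C, \mathbb{B}_{\rm dR}^+)^{\dagger} = H^i \, R\mathrm{Hom}_{\mathrm{RigMot}^{\dagger}}(R\ell_* \mathcal{C}_C, \mathcal{F}_{\mathbb{B}_{\rm dR}^+}^{\rm mot}). \]
The right-hand side factors through motives by construction of Vezzani's overconvergent de Rham cohomology in \cite{Vezzani}. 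Both functors are cohomological, compatible with distinguished triangles and with shifts.

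By Theorem \ref{ayoub}, $\mathrm{RigMot}_F^{\rm comp}$ is generated, as a saturated triangulated subcategory, by the motives $\qp(Z)[d]$ with $Z$ smooth and proper over $F$ and $d \in \z$. It therefore suffices to prove the isomorphism for $\mathcal{C} = \qp(Z)$ with $Z$ smooth proper over $F$. In that case, overconvergent de Rham cohomology $H_{\rm dR}^i(Z/F)^{\dagger}$ coincides with usual de Rham cohomology $H_{\rm dR}^i(Z/F)$, and Proposition \ref{propre\'egal} identifies
\[ H^i(Z_C, \mathbb{B}_{\rm dR}^+)^{\dagger} \simeq H^i_{\mathrm{\acute{e}t}}(Z_C, L\eta_t R\nu'_* \mathbb{B}_{\rm dR}^+). \]

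It remains to establish the filtered $B_{\rm dR}^+$-comparison theorem
\[ H^i_{\mathrm{\acute{e}t}}(Z_C, L\eta_t R\nu'_* \mathbb{B}_{\rm dR}^+) \simeq H_{\rm dR}^i(Z/F) \otimes_F B_{\rm dR}^+, \]
for $Z$ smooth proper over the complete discretely valued field $F$ with perfect residue field $k$ (recall that $k$ is algebraically closed since it is the residue field of $C$). I would deduce this from the $B_{\rm dR}^+$-specialization of Bhatt--Morrow--Scholze. After possibly enlarging $F$, which is harmless by Proposition \ref{2lim}, $Z$ admits a smooth proper formal model $\mathfrak{Z}$ over $\mathcal{O}_F$. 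Proposition \ref{cj} (together with the usual limit argument over compact subintervals $I \subset ]0,1[$ to pass from $\mathbb{B}_I$ to $\mathbb{B}_{\rm dR}^+$, in the spirit of Lemma \ref{44}) relates $R\Gamma_{\mathrm{\acute{e}t}}(Z_C, L\eta_t R\nu'_* \mathbb{B}_{\rm dR}^+)$ to the base change to $B_{\rm dR}^+$ of the $A_{\rm inf}$-cohomology of $\mathfrak{Z}$; then \cite[Th.~13.1]{BMS} identifies the latter with $H_{\rm dR}^i(\mathfrak{Z}/\mathcal{O}_F)[1/p] \otimes_F B_{\rm dR}^+ = H_{\rm dR}^i(Z/F) \otimes_F B_{\rm dR}^+$.

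The main obstacle is this last step: one must verify that the $F$-rational structure on the left-hand side genuinely descends the $B_{\rm dR}^+$-module to an honest $F$-vector space, namely to $H^i_{\rm dR}(Z/F)$, and not merely to some $B_{\rm dR}^+$-lattice. This is precisely the content of the $B_{\rm dR}^+$-comparison isomorphism of $p$-adic Hodge theory for smooth proper rigid varieties over a complete discretely valued base with perfect residue field, and is the key input borrowed from \cite{BMS}.
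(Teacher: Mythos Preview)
Your approach differs from the paper's and has two genuine gaps.

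First, reducing to generators via Theorem \ref{ayoub} requires a natural transformation between the two realizations, defined on all of $\mathrm{RigMot}_F^{\rm comp}$, whose components you then test on smooth proper $Z$. You never construct such a map; you only assert that the two sides agree object by object on generators, which is not enough. The paper supplies the map directly and for \emph{all} smooth affinoids: for $T/F$ with dagger presentation $(T_h)$, the Poincar\'e lemma for $\O\mathbb{B}_{\rm dR}^+$ gives a functorial quasi-isomorphism $L\eta_t R\nu'_* \mathbb{B}_{\rm dR,T_h}^+ \simeq \Omega_{T_h}^{\bullet}\hat\otimes_F B_{\rm dR}^+$ of complexes of \'etale sheaves on $T_{h,C}$. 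Taking $R\Gamma$, passing to the colimit over $h$, and then using strictness of the overconvergent de Rham complex (Gro{\ss}e-Kl\"onne) together with finiteness of $H_{\rm dR}^*(\cdot/F)^{\dagger}$ (Vezzani) removes the completion. So the comparison map is built and shown to be an isomorphism in one stroke, without any reduction to the proper case.

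Second, your assertion that ``after possibly enlarging $F$, $Z$ admits a smooth proper formal model $\mathfrak{Z}$ over $\mathcal{O}_F$'' is simply false: smooth proper rigid spaces over a discretely valued field need not acquire good reduction after any finite extension (a curve with multiplicative reduction already fails). Proposition \ref{cj} concerns semi-stable formal schemes and the sheaves $\mathbb{B}_I$ for $I\subset\,]p^{-1},1[$; even granting a semi-stable model (itself not known in general), deducing the $\mathbb{B}_{\rm dR}^+$-statement from it is not the routine limit argument you suggest. The $B_{\rm dR}^+$-comparison in \cite{BMS} that would actually do the job in the discretely valued case is proved there by exactly the Poincar\'e-lemma computation the paper carries out directly.
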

\begin{proof}
Let us show that for any compact motive $\mathcal{C}$ defined over $F$, one has a quasi-isomorphism :
\[ R\Gamma(\mathcal{C}_C, \mathbb{B}_{\rm dR}^+)^{\dagger} \simeq R\Gamma_{\rm dR}(\mathcal{C}/F)^{\dagger} \otimes_F B_{\rm dR}^+. \]
It suffices to construct explicit complexes computing both sides and a quasi-isomorphism between them, for any smooth affinoid rigid space $T= \mathrm{Spa}(R,R^{\circ})$ over $F$, \textit{functorially} in $R$ (they will then glue). For this, let $(T_h)_h$ be a presentation of a dagger structure on $T$. Recall that Elkik's theorem (\cite{elkik}) gives for any $h$ an equivalence
\[ \underset{F' | F} \varprojlim \widetilde{T}_{h,F',\mathrm{\acute{e}t}} \simeq \widetilde{T}_{h,C,\mathrm{\acute{e}t}}, \]
where $F'$ runs among the finite extensions of $F$. If $\mathcal{F}$ is a vector bundle on $T_h$ and $F'$ a finite extension of $F$, $\mathcal{F} \hat{\otimes}_F B_{\rm dR}^+$ is a sheaf of $\O_{T_{h,F'}}^{\dagger}$-modules, as $B_{\rm dR}^+$ is a $\bar{F}$-algebra. In other words, $\mathcal{F} \hat{\otimes}_F B_{\rm dR}^+$ defines an object of the left hand side of the above equivalence, and can thus be seen as an \'etale sheaf on $T_{h,C}$, to which we'll give the same name. One shows using Poincar\'e's lemma that for any $h$, the natural map
\[ \mathbb{B}_{\mathrm{dR},T_{h}}^+ \to \O\mathbb{B}_{\mathrm{dR},T_{h}}^+ \otimes_{\O_{T_{h}}} \Omega_{T_{h}}^{\bullet} \]
induces an isomorphism
\[ R\nu'_* \mathbb{B}_{\mathrm{dR},T_{h}}^+ \simeq \O_{T_h} \hat{\otimes}_F B_{\rm dR}^+ \to \Omega_{T_h}^1 \hat{\otimes}_F t^{-1} B_{\rm dR}^+ \to \dots \to \Omega_{T_h}^n \hat{\otimes}_F t^{-n} B_{dR}^+ \]
where $n=\dim T_h$ and the differential of the complex on the right hand side is just the usual differential of the de Rham complex (the terms of the complex on the right being seen as \'etale sheaves on $T_{h,C}$). Applying $L\eta_t$ on both sides, we get a quasi-isomorphism :
\[ L\eta_t R\nu'_* \mathbb{B}_{\mathrm{dR},T_{h}}^+ \simeq \Omega_{T_h}^{\bullet} \hat{\otimes}_F B_{\rm dR}^+. \]
For the details, see \cite[Prop. 2.3.16, Rem. 2.3.18]{theseaclb}. Then apply $R\Gamma(T_{h,C},\cdot)$ and take the direct limit over $h$. We get
\[ \varinjlim ~ R\Gamma(T_{h,C},L\eta_t R\nu'_* \mathbb{B}_{\mathrm{dR},T_{h}}^+) \simeq \underset{h} \varinjlim ~ R\Gamma(T_h, \Omega_{T_h}^{\bullet} \hat{\otimes}_F B_{\rm dR}^+) \simeq \underset{h} \varinjlim ~ (\Omega^{\bullet}(T_h) \hat{\otimes}_F B_{\rm dR}^+ ) \]
(the last equality by quasi-compacity). Thus the specific complex on the right computes the left hand side, which is isomorphic to $R\Gamma(T_C, \mathbb{B}_{\rm dR}^+)^{\dagger}$. The inductive systems $(\Omega^{\bullet}(T_h) \hat{\otimes}_F B_{\rm dR}^+)_h$ and $(\Omega^{\dagger,\bullet}(T_h) \hat{\otimes}_F B_{\rm dR}^+)_h$ are coinitial. Hence we can replace in the inductive limit the de Rham complexes by overconvergent de Rham complexes, i.e. we get a quasi-isomorphism :
\[ R\Gamma(T_C, \mathbb{B}_{\rm dR}^+)^{\dagger} \simeq \underset{h} \varinjlim ~ ( \Omega^{\dagger,\bullet}(T_h) \hat{\otimes}_F B_{\rm dR}^+ ). \]
The natural map, for all $h$ :
\[  \Omega^{\dagger,\bullet}(T_h) \otimes_F B_{\rm dR}^+ \to \Omega^{\dagger,\bullet}(T_h) \hat{\otimes}_F B_{\rm dR}^+ \]
is a quasi-isomorphism. Indeed, by \cite[Lem. 4.7]{grosse} and Lemma \ref{schneider} below, we have for all $k$
\[ H^k \left( \Omega^{\dagger,\bullet}(T_h) \hat{\otimes}_F B_{\rm dR}^+ \right) = H_{\rm dR}^k(T_h)^{\dagger} \hat{\otimes}_F B_{\rm dR}^+. \]
and we can get rid of the completed tensor product, as we know that $H_{\rm dR}^k(T_h/F)^{\dagger}$ is a finite dimensional $F$-vector space for all $h$ (\cite[Cor. 5.21]{Vezzani}). So, we obtain a quasi-isomorphism :
\[ \underset{h} \varinjlim ~ \Omega^{\bullet}(T_h) \otimes_F B_{\rm dR}^+ \to \underset{h} \varinjlim ~ (\Omega^{\bullet}(T_h) \hat{\otimes}_F B_{\rm dR}^+ ). \]
We then take the inverse limit of these isomorphisms over all possible choices of dagger structures on $T$ to make everything functorial in $R$. . 
\end{proof}

The following lemma was used in the proof.

\begin{lemme} \label{schneider}
Let $K^{\bullet}$ be a strict complex of $\qp$-Fr\'echet spaces, let $W$ be a $\qp$-Banach space. Then one has for all $k$
\[ H^k( K^{\bullet} \hat{\otimes}_{\qp} W) = H^k(K^{\bullet}) \hat{\otimes}_{\qp} W. \]
\end{lemme}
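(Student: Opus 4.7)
The strategy is to exploit strictness to split $K^\bullet$ into strict short exact sequences of Fréchet spaces, then reduce the statement to the fact that $(-) \hat{\otimes}_{\qp} W$ is exact on such sequences.

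First, since $K^\bullet$ is strict, each differential $d^k : K^k \to K^{k+1}$ has closed image, so that $Z^k := \ker(d^k)$, $B^k := \mathrm{im}(d^{k-1})$ and $H^k(K^\bullet) = Z^k/B^k$ inherit natural Fréchet space structures, and fit into two strict short exact sequences of Fréchet spaces:
\[ 0 \to Z^k \to K^k \to B^{k+1} \to 0, \qquad 0 \to B^k \to Z^k \to H^k(K^\bullet) \to 0. \]
If I can show that $(-)\hat{\otimes}_{\qp} W$ preserves exactness of such sequences, then applying it to the above and reassembling gives precisely $H^k(K^\bullet \hat{\otimes}_{\qp} W) \simeq H^k(K^\bullet) \hat{\otimes}_{\qp} W$, since the tensored sequences compute $Z^k \hat{\otimes} W$, $B^k \hat{\otimes} W$, $H^k \hat{\otimes} W$ as the kernels, images and their quotient in $K^\bullet \hat{\otimes}_{\qp} W$.

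For the exactness of $(-)\hat{\otimes}_{\qp} W$, I would use the non-archimedean structure of $W$: by the orthonormal basis theorem over $\qp$ (Gruson--Serre), any $\qp$-Banach space is, up to equivalence of norms, isometric to $c_0(I)$ for some set $I$. Writing a Fréchet space $V$ as a projective limit $V = \varprojlim_n V_n$ of Banach spaces (with the transition maps surjective), one has $V \hat{\otimes}_{\qp} c_0(I) \simeq \varprojlim_n c_0(I, V_n)$, where $c_0(I, V_n)$ denotes $I$-indexed families of elements in $V_n$ tending to $0$. Left- and middle-exactness of $(-)\hat{\otimes}_{\qp} W$ on a strict short exact sequence $0 \to V' \to V \to V'' \to 0$ of Fréchet spaces is then formal from the description at each level $n$.

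The main obstacle is right-exactness. Given a null family in $c_0(I, V''_n)$, one must lift it to a null family in $c_0(I, V_n)$ with controlled seminorms, and then ensure compatibility through the inverse limit in $n$ (so that the Mittag-Leffler condition holds and $R^1 \varprojlim$ vanishes). Here strictness enters crucially: by the open mapping theorem, the quotient map $V \twoheadrightarrow V''$ admits continuous seminorm-controlled set-theoretic lifts on bounded subsets, which is exactly what allows both the term-by-term lifting of null families and the compatibility of the lifts under the transition maps $V_{n+1} \to V_n$. Once these ingredients are in place, the result follows by reassembling the two short exact sequences.
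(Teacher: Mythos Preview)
The paper states this lemma without proof (the label \texttt{schneider} signals that it is being quoted as a standard fact of non-archimedean functional analysis, presumably from Schneider's book or from Schneider--Teitelbaum). So there is nothing to compare against; your task was really to supply the missing argument.

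Your approach is the standard one and is correct. The reduction to exactness of $(-)\hat\otimes_{\qp} W$ on strict short exact sequences of Fr\'echet spaces via the two short exact sequences involving $Z^k$, $B^k$, $H^k$ is exactly right, and the identification $V \hat\otimes_{\qp} c_0(I) \simeq c_0(I,V)$ for a Fr\'echet space $V$ (via $\varprojlim_n c_0(I,V_n)$) is the correct description. Left and middle exactness are indeed formal.

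On right-exactness, your sketch is fine but let me sharpen the point you flagged. Writing the strict surjection $V \twoheadrightarrow V''$ as an inverse limit of surjections $V_n \twoheadrightarrow V''_n$ of Banach spaces (this is where strictness via the open mapping theorem is really used), you get short exact sequences $0 \to V'_n \to V_n \to V''_n \to 0$ at each level. Tensoring with $c_0(I)$ preserves these (for Banach spaces this is elementary once $W \simeq c_0(I)$), and then the only issue in passing to the limit is the vanishing of $R^1\varprojlim_n c_0(I,V'_n)$. Since $V'$ maps with dense image into each $V'_n$, the transition maps $c_0(I,V'_{n+1}) \to c_0(I,V'_n)$ also have dense image, and the topological Mittag-Leffler for inverse systems of complete metrizable groups gives the vanishing. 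This is slightly cleaner than trying to lift a null family in $V''$ directly with simultaneous control of all seminorms.
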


\begin{proposition} \label{bplusdr}
Assume that $C$ is the completion of an algebraic closure of $W(k)[1/p]$. Let $i\geq 0$. The $B_{\rm dR}^+$-module $H^i(Z_C,\mathbb{B}_{\rm dR}^+)^{\dagger}$ is finite free. 
\end{proposition}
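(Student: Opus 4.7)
The plan is to reduce the statement to Lemma~\ref{chap13} by a descent argument, using crucially that the cohomology theory factors through motives (Proposition~\ref{factormotives}) and the motivic descent statement of Proposition~\ref{2lim}.

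First, I would observe that the smooth quasi-compact rigid space $Z_C$ over $C$ defines a compact motive $\qp(Z_C) \in \mathrm{RigMot}_C^{\rm comp}$. Applying Proposition~\ref{2lim}, there exists a finite extension $F$ of $W(k)[1/p]$ and a compact motive $\mathcal{C} \in \mathrm{RigMot}_F^{\rm comp}$ such that $\mathcal{C}_C \simeq \qp(Z_C)$ in $\mathrm{RigMot}_C^{\rm comp}$. Note that $\mathcal{C}$ itself need not be represented by a smooth rigid space over $F$ (indeed, as the remark after Proposition~\ref{2lim} underlines, this would be false in general at the level of rigid spaces); but this is harmless because our cohomology functor $R\Gamma(\cdot, \mathbb{M})^{\dagger}$ is defined on all of $\mathrm{RigMot}$.

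Next, because $R\Gamma(\cdot, \mathbb{B}_{\rm dR}^+)^{\dagger}$ factors through $\mathrm{RigMot}$, the isomorphism $\mathcal{C}_C \simeq \qp(Z_C)$ gives
\[ H^i(Z_C, \mathbb{B}_{\rm dR}^+)^{\dagger} \simeq H^i(\mathcal{C}_C, \mathbb{B}_{\rm dR}^+)^{\dagger}. \]
Lemma~\ref{chap13} (applied to $\mathcal{C}$, which is a compact motive over $F$) now identifies the right-hand side with
\[ H_{\rm dR}^i(\mathcal{C}/F)^{\dagger} \otimes_F B_{\rm dR}^+. \]

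Finally, by \cite[Cor.~5.21]{Vezzani} (already invoked in the proof of Lemma~\ref{chap13}), $H_{\rm dR}^i(\mathcal{C}/F)^{\dagger}$ is a finite dimensional $F$-vector space. Since $F \hookrightarrow B_{\rm dR}^+$ is a flat ring map and a finite dimensional $F$-vector space becomes a finite free $B_{\rm dR}^+$-module after base change, we conclude that $H^i(Z_C, \mathbb{B}_{\rm dR}^+)^{\dagger}$ is a finite free $B_{\rm dR}^+$-module, as desired.

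The only conceptually nontrivial step is the descent from $C$ to a finite extension $F$ of $W(k)[1/p]$, which is precisely the content of Proposition~\ref{2lim} and crucially uses that we work in the motivic category rather than with rigid spaces directly; everything else is a bookkeeping of results already available in the excerpt.
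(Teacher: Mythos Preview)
Your proof is correct and follows essentially the same approach as the paper. The paper's own proof is the one-line ``It is a direct consequence of Proposition~\ref{2lim}, the last lemma and \cite[Cor 5.21]{Vezzani}'', and your argument is precisely the unpacking of that sentence: descend the motive to a finite extension $F$ via Proposition~\ref{2lim}, apply Lemma~\ref{chap13} to identify the cohomology with $H_{\rm dR}^i(\mathcal{C}/F)^{\dagger} \otimes_F B_{\rm dR}^+$, and conclude finiteness from \cite[Cor.~5.21]{Vezzani}.
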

\begin{proof}
It is a direct consequence of Proposition \ref{2lim}, the last lemma and \cite[Cor 5.21]{Vezzani}.
\end{proof}

\begin{remarque}
If we don't make any assumption of $C$, it is still true that there exists a smooth affinoid rigid space $S=\mathrm{Spa}(A,A^{\circ})$ over $W(k)[1/p]$, with a map $A \to C$, such that the motive $\qp(Z_C)$ descends to a motive over $S$. If $\mathcal{C}$ is a rigid motive over $S$, Vezzani defined in \cite{Vezzani} the overconvergent relative de Rham cohomology $H_{\rm dR}^*(\mathcal{C}/S)^{\dagger}$ of $\mathcal{C}$ over $S$. If we knew that these groups were $A$-modules of finite type when $\mathcal{C}$ is quasi-compact (i.e. if we had a relative version of the finiteness theorem for overconvergent de Rham cohomology), we could prove as before that one has an isomorphism :
\[ H^i(\mathcal{C}_C,\mathbb{B}_{\rm dR}^+)^{\dagger} \simeq H_{\rm dR}^i(\mathcal{C}/S)^{\dagger} \otimes_A B_{\rm dR}^+ \]
for any quasi-compact motive $\mathcal{C}$ over $S$. As relative de Rham cohomology groups, the groups $H_{\rm dR}^k(\mathcal{C}/S)^{\dagger}$ are endowed with an integrable connection, the Gauss-Manin connection (as defined in \cite{katzoda}). But any coherent sheaf with an integrable connection on a space over a characteristic $0$ field is automatically locally free, so this would give the result in this case too.
\end{remarque} 

\begin{remarque} \label{derhamlisse}
Lemma \ref{homotopie} and the motivic definition of the overconvergent de Rham cohomology have another interesting application, unrelated to the rest of this paper. Let $G$ be a locally profinite group acting continuously (in the sense of \cite[Def. 2.1, Lem. 2.2]{padicLT}) on a smooth Stein variety $Z$, of dimension $d$. Then the action of $G$ on $H_{\mathrm{dR},c}^i(Z)$ is smooth, for any $i\geq 0$. Indeed let $(U_j)$ be a Stein cover (that is, an increasing covering by affinoid $U_j$ such that the transition maps $\O(U_{j+1}) \to \O(U_j)$ are compact with dense image for all $j$) of $Z$. Let $v \in H_{\mathrm{dR},c}^i(Z)$. As
\[ H_{\mathrm{dR},c}^i(Z) = \underset{j} \varinjlim ~ H_{\mathrm{dR},c}^i(U_j)^{\dagger}, \]
there exists $j$ such that $v \in H_{\mathrm{dR},c}^i(U_j)^{\dagger}$. One can find a compact open subgroup $H$ of $G$ stabilizing $U_j$. It is thus enough to show that the action of $H$ on $H_{\mathrm{dR},c}^i(U_j)^{\dagger}$ is smooth. As this space is finite dimensional, this amounts to say that the action of $H$ factors through a finite quotient and this can be checked on the dual, which, by Poincar\'e duality (\cite{grosse}), identifies with $H_{\rm dR}^{2d-i}(U_j)^{\dagger}$. But if $g \in H$ is close enough to the identity, the morphism it induces on $U_j$ is homotopic to the identity by Lemma \ref{homotopie}, thus equal to the identity on the associated motive. As $H_{\rm dR}^{2d-i}(U_j)^{\dagger}$ depends only on the motive attached to $U_j$, $g$ acts trivially on it.

This argument also applies to $\ell$-adic cohomology with compact supports ($\ell \neq p$), where the result was already proved by Berkovich \cite[\S 7]{berko}. The smoothness of the action on de Rham cohomology with compact supports was already checked by hand in \cite{DLB}, in the particular case where $G=GL_2(\qp)$ and $Z$ is a Drinfeld covering of the non-archimedean half plane. 
\end{remarque}

\begin{corollaire} \label{bidon}
Assume that $C$ is the completion of an algebraic closure of $W(k)[1/p]$. Let $i \geq 0$. The $B_{\rm dR}^+$-module $H^i(Z_C,\mathbb{B})^{\dagger} \otimes_B B_{\rm dR}^+$ is finite free.
\end{corollaire}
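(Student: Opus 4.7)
The plan is to deduce this as an essentially immediate consequence of Proposition \ref{modeleformel} combined with Proposition \ref{bplusdr}. Proposition \ref{modeleformel} provides the quasi-isomorphism
\[ R\Gamma(Z_C,\mathbb{B})^{\dagger} \otimes_B B_{\rm dR}^+ \simeq R\Gamma(Z_C,\mathbb{B}_{\rm dR}^+)^{\dagger}, \]
so once we know that the $i$-th cohomology of the left-hand side is canonically identified with $H^i(Z_C,\mathbb{B})^{\dagger} \otimes_B B_{\rm dR}^+$, Proposition \ref{bplusdr} (which requires the hypothesis on $C$) shows that this module is finite free, proving the corollary.

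The only step requiring a mild justification is therefore the commutation of $H^i$ with the tensor product $\otimes_B B_{\rm dR}^+$. For this I would invoke Proposition \ref{B_I}: each cohomology group $H^j(Z_C,\mathbb{B})^{\dagger}$ is a coadmissible $\varphi$-module, and thus corresponds to a compatible family $(M_j^I)_I$ of finitely generated $B_I$-modules with $M_j^I = H^j(Z_C,\mathbb{B})^{\dagger} \otimes_B B_I$. Choosing a compact interval $I \subset ]0,1[$ whose associated annulus in $Y_{C^\flat}$ contains the untilt representing $\infty$, the canonical factorization $B \to B_I \to B_{\rm dR}^+$ yields
\[ H^j(Z_C,\mathbb{B})^{\dagger} \otimes_B B_{\rm dR}^+ \;=\; M_j^I \otimes_{B_I} B_{\rm dR}^+. \]
The morphism $B_I \to B_{\rm dR}^+$ is flat, being the composite of a localization at a closed point of $\mathrm{Spec}\,B_I$ with the completion along the corresponding maximal ideal of a Noetherian local ring. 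Consequently the higher Tor terms vanish on each $H^j(Z_C,\mathbb{B})^{\dagger}$, and the hyper-Tor spectral sequence computing the cohomology of $R\Gamma(Z_C,\mathbb{B})^{\dagger} \otimes_B^L B_{\rm dR}^+$ collapses to its bottom row, giving the sought-after identification.

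The main potential obstacle is exactly this Tor-vanishing point, but once the coadmissibility of the $\varphi$-modules (Proposition \ref{B_I}) and the local structure of $X_{C^\flat}$ at the smooth closed point $\infty$ are invoked, it is essentially formal; the substantive content of the statement lies in Proposition \ref{bplusdr}. As a back-up route, one could also proceed entirely through motives: by Proposition \ref{2lim}, descend $\qp(Z_C)$ to a compact motive $\mathcal{C}$ over a finite extension $F$ of $W(k)[1/p]$, prove the $\mathbb{B}$-analogue of Lemma \ref{chap13}, namely $H^i(\mathcal{C}_C,\mathbb{B})^{\dagger} \simeq H^i_{\rm dR}(\mathcal{C}/F)^{\dagger} \otimes_F B$ (by running Vezzani's finite-dimensionality \cite[Cor.~5.21]{Vezzani} through the Poincaré lemma computation used in the proof of Lemma \ref{chap13}, but with $B$ in place of $B_{\rm dR}^+$), and then tensor with $B_{\rm dR}^+$ to obtain a manifestly finite free module $H^i_{\rm dR}(\mathcal{C}/F)^{\dagger} \otimes_F B_{\rm dR}^+$.
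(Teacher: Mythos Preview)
Your main argument is correct and is exactly the paper's proof (``Combine Propositions \ref{modeleformel} and \ref{bplusdr}''), with the welcome addition of spelling out why taking $H^i$ commutes with $\otimes_B B_{\rm dR}^+$ via coadmissibility and flatness of $B_I \to B_{\rm dR}^+$.

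One caveat on your back-up route: the Poincar\'e lemma argument behind Lemma \ref{chap13} relies essentially on the fact that $B_{\rm dR}^+$ is a $\bar{F}$-algebra, so that $\mathcal{F} \hat{\otimes}_F B_{\rm dR}^+$ can be viewed as an \'etale sheaf on $T_{h,C}$ via Elkik's equivalence; this fails for $B$, so the ``$\mathbb{B}$-analogue of Lemma \ref{chap13}'' cannot be obtained by the same method. If you wanted to pursue that route you would instead have to go through the $q$-de Rham description of $L\eta_t R\nu'_* \mathbb{B}_I$ recorded in Remark \ref{egalomega}.
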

\begin{proof}
Combine Propositions \ref{modeleformel} and \ref{bplusdr}.
\end{proof}

\begin{proposition} \label{B_e}
Assume that $C$ is the completion of the algebraic closure of $W(k)[1/p]$. Let $i \geq 0$. The $B_e$-module $H^0(X_{C^{\flat}} \backslash \{ \infty \}, H_{\mathcal{FF}}^i(Z))$ is a finite free $B_e$-module.
\end{proposition}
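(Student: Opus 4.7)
The strategy is to descend to a discretely valued subfield and exploit the fact that pro-étale cohomology of $\mathbb{B}$ on a proper smooth variety is étale cohomology tensored with $B$. By Proposition \ref{2lim}, $\qp(Z_C)$ descends to a compact motive $\mathcal{C}_0 \in \mathrm{RigMot}_F^{\mathrm{comp}}$ over some finite extension $F$ of $F_0 := W(k)[1/p]$. Composing base change $\mathrm{RigMot}_F^{\mathrm{comp}} \to \mathrm{RigMot}_C^{\mathrm{comp}}$ with $\mathcal{C} \mapsto R\Gamma(\mathcal{C}, \mathbb{B})^{\dagger}$ (triangulated thanks to Proposition \ref{factormotives}), I obtain a triangulated functor $\Phi$ from $\mathrm{RigMot}_F^{\mathrm{comp}}$ to the bounded derived category of $\varphi$-modules over $B$. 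Let $\mathcal{A}$ be the essential image of the exact functor $V^{\bullet} \mapsto V^{\bullet} \otimes_{\qp} B$ from $D^b(\mathrm{Vec}_{\qp}^{\mathrm{fd}})$. Using $B^{\varphi=1} = \qp$ and the surjectivity of $\varphi - 1$ on $B$, one checks that $R\mathrm{Hom}(V_1 \otimes_{\qp} B, V_2 \otimes_{\qp} B) = \mathrm{Hom}_{\qp}(V_1, V_2)$ is concentrated in degree $0$ in the category of $\varphi$-modules over $B$; hence $V \mapsto V \otimes_{\qp} B$ is fully faithful, $\mathcal{A}$ is triangulated, and every object of $\mathcal{A}$ splits as the direct sum of shifts of its cohomology.

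For $Z'$ proper smooth over $F$, arguing as in the proof of Proposition \ref{B_I} (using Proposition \ref{proprepro\'et} together with the fact that $H^i_{\mathrm{\acute{e}t}}(Z'_C, \qp) \otimes_{\qp} B$ is $t$-torsion-free so that $L\eta_t$ preserves it), one obtains $H^i(Z'_C, \mathbb{B})^{\dagger} = H^i_{\mathrm{\acute{e}t}}(Z'_C, \qp) \otimes_{\qp} B$. Combined with the Ext-vanishing above, this forces $\Phi(\qp(Z')) \simeq R\Gamma_{\mathrm{\acute{e}t}}(Z'_C, \qp) \otimes_{\qp} B \in \mathcal{A}$. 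Since $\mathcal{A}$ is also stable under direct factors (an idempotent of $V \otimes_{\qp} B$ as a $\varphi$-module corresponds to an idempotent of $V$), it is a saturated triangulated subcategory. Theorem \ref{ayoub} then forces $\Phi(\mathcal{C}_0) \in \mathcal{A}$, namely $R\Gamma(Z_C, \mathbb{B})^{\dagger} \simeq V^{\bullet} \otimes_{\qp} B$ for some $V^{\bullet} \in D^b(\mathrm{Vec}_{\qp}^{\mathrm{fd}})$.

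Taking $H^i$ (exact since $B$ is $\qp$-flat) yields $H^i(Z_C, \mathbb{B})^{\dagger} \simeq H^i(V^{\bullet}) \otimes_{\qp} B$, and applying $(\cdot[1/t])^{\varphi=1}$ with $B_e = B[1/t]^{\varphi=1}$ we conclude
\[ H^0(X_{C^{\flat}} \setminus \{\infty\}, H^i_{\mathcal{FF}}(Z)) \simeq H^i(V^{\bullet}) \otimes_{\qp} B_e, \]
which is finite free over $B_e$. The main obstacle is the $\mathrm{Ext}$-vanishing establishing the good behavior of $\mathcal{A}$, which ultimately rests on the short exact sequence $0 \to \qp \to B \to B \to 0$ whose second arrow is $\varphi - 1$.
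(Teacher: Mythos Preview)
Your argument has a genuine gap at the step claiming $H^i(Z'_C,\mathbb{B})^{\dagger}\simeq H^i_{\mathrm{\acute{e}t}}(Z'_C,\qp)\otimes_{\qp}B$ as $\varphi$-modules for $Z'$ proper smooth. This identification is false. By Proposition~\ref{criscomp} (or the proof of Remark~\ref{propercase}), the coherent sheaf $H^i_{\mathcal{FF}}(Z')$ attached to $H^i(Z'_C,\mathbb{B})^{\dagger}$ is $\mathcal{E}(H^i_{\mathrm{crys}}(\mathfrak{Z}'_s))$ when $Z'$ has proper smooth formal model $\mathfrak{Z}'$, and this is \emph{not} the trivial bundle in general: already for $Z'=\mathbf{P}^1$ one has $H^2_{\mathcal{FF}}(\mathbf{P}^1)\simeq\mathcal{O}_{X_{C^\flat}}(-1)$, not $\mathcal{O}_{X_{C^\flat}}$. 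So $\Phi(\qp(Z'))$ does not lie in your subcategory $\mathcal{A}$, and the whole descent through Theorem~\ref{ayoub} collapses.

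The underlying confusion is twofold. First, $R\Gamma_{\mathrm{\acute{e}t}}(Z',L\eta_t R\nu'_*\mathbb{B})$ is not $L\eta_t R\Gamma_{\mathrm{pro\acute{e}t}}(Z',\mathbb{B})$: Proposition~\ref{commutation} only gives this for affinoids, and $L\eta_t$ does not commute with the \v{C}ech gluing needed in the proper case. Second, even if one had the latter complex, ``$t$-torsion-free cohomology'' does not mean $L\eta_t$ acts trivially on the $\varphi$-structure: $H^i(L\eta_t K)\simeq t^iH^i(K)$, and the factor $t^i$ twists the Frobenius by $p^i$. What \emph{is} true is that the restriction of $H^i_{\mathcal{FF}}(Z')$ to $X_{C^\flat}\setminus\{\infty\}$ equals $H^i_{\mathrm{\acute{e}t}}(Z'_C,\qp)\otimes_{\qp}B_e$ (the $B$-pair description in Section~\ref{relation}), but restriction to $X\setminus\{\infty\}$ is not a functor landing in a category where your saturated-subcategory argument works: the essential image of $V\mapsto V\otimes_{\qp}B_e$ inside $D^b(\mathrm{Mod}_{B_e})$ is not closed under cones (take multiplication by a non-unit of $B_e$).

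The paper's proof avoids all this: it simply observes that after descending the motive to a finite extension $F/W(k)[1/p]$ the coherent sheaf $H^i_{\mathcal{FF}}(Z)$ is $\mathcal{G}_F$-equivariant, so any torsion in its restriction to $X_{C^\flat}\setminus\{\infty\}$ would be supported on a finite $\mathcal{G}_F$-stable set of closed points, which is empty by \cite[Prop.~10.1.1]{FF}. Combined with finite type (Proposition~\ref{B_I}) and $B_e$ being a PID, this gives freeness immediately.
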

\begin{proof}
By Proposition \ref{2lim}, there exists a finite extension $F$ of $W(k)[1/p]$ such that the motive of $Z$ is defined over $F$. As $B_e$ is principal, to check that $H^0(X_{C^{\flat}} \backslash \{ \infty \}, H_{\mathcal{FF}}^i(Z))$ is finite free over $B_e$ amounts to say that it is torsion free, as we already know that it is of finite type. But if $H_{\mathcal{FF}}^i(Z)_{|_{X_{C^{\flat}} \backslash \{\infty \}}}$ had a torsion part, its support would be a finite set of points of $X_{C^{\flat}} \backslash \{ \infty\}$, stable by $\mathcal{G}_F$ and such a set is necessarily empty, by \cite[Prop. 10.1.1]{FF}. 
\end{proof}

\begin{theoreme} \label{vb}
Assume that $C$ is the completion of the algebraic closure of $W(k)[1/p]$. Let $i \geq 0$. The sheaf $H_{\mathcal{FF}}^i(Z)$ is a vector bundle on $X_{C^{\flat}}$. It vanishes for $i<0$ and $i>2n$. 
\end{theoreme}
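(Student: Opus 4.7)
The plan is to combine the Beauville--Laszlo description of coherent sheaves on the Fargues--Fontaine curve with the freeness statements already established (Proposition \ref{B_e} for the $B_e$-part and Corollary \ref{bidon} for the $B_{\rm dR}^+$-part), and then to deduce the vanishing range from Lemma \ref{chap13} together with the classical vanishing of overconvergent de Rham cohomology above $2n$ for smooth quasi-compact rigid spaces of dimension $n$.

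I will first invoke the fact that the category of coherent sheaves on $X_{C^{\flat}}$ is equivalent to the category of triples consisting of a finite $B_e$-module $M_e$, a finite $B_{\rm dR}^+$-module $M_{\rm dR}^+$, and a gluing isomorphism $M_e \otimes_{B_e} B_{\rm dR} \simeq M_{\rm dR}^+[1/t]$. Since $B_e$ is a principal ideal domain and $B_{\rm dR}^+$ is a discrete valuation ring, a coherent sheaf on $X_{C^{\flat}}$ is a vector bundle if and only if both $M_e$ and $M_{\rm dR}^+$ are finite free. By the remark following Definition \ref{defff}, the triple attached to $H_{\mathcal{FF}}^i(Z)$ is $(H^i(Z_C, \mathbb{B}_e)^{\dagger}, H^i(Z_C, \mathbb{B}_{\rm dR}^+)^{\dagger}, \mathrm{can})$: freeness of the first component is Proposition \ref{B_e}, freeness of the second is Corollary \ref{bidon}, and the vector bundle property follows.

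For the vanishing, the case $i<0$ is immediate since $R\Gamma(Z_C, \mathbb{B})^{\dagger}$ is supported in non-negative degrees. For $i>2n$, because the sheaf has just been shown to be a vector bundle, it is enough to check that its stalk at $\infty$, namely $H^i(Z_C, \mathbb{B}_{\rm dR}^+)^{\dagger}$, vanishes. Using Proposition \ref{2lim}, I descend the compact motive $\qp(Z_C)$ to a compact motive $\mathcal{C}$ over some finite extension $F$ of $W(k)[1/p]$; Lemma \ref{chap13} then gives an isomorphism $H^i(Z_C, \mathbb{B}_{\rm dR}^+)^{\dagger} \simeq H_{\rm dR}^i(\mathcal{C}/F)^{\dagger} \otimes_F B_{\rm dR}^+$. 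By faithful flatness of $B_{\rm dR}^+$ over $F$, it suffices to show $H_{\rm dR}^i(\mathcal{C}/F)^{\dagger}=0$; by faithful flatness of $C$ over $F$ and base-change compatibility of Vezzani's overconvergent de Rham functor, this reduces to $H_{\rm dR}^i(\mathcal{C}_C/C)^{\dagger}=H_{\rm dR}^i(Z_C/C)^{\dagger}=0$. The last vanishing in degrees $>2n$ is classical, for instance via Poincar\'e duality (\cite{grosse}) identifying $H_{\rm dR}^i(Z_C/C)^{\dagger}$ with the dual of $H_{{\rm dR},c}^{2n-i}(Z_C/C)^{\dagger}$, which is zero for $2n-i<0$.

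The main delicate ingredient, already handled by the results established earlier in the paper, is the identification provided by Lemma \ref{chap13}, which relies on the motivic formalism and on an Elkik-type descent to a discretely valued base; once it is granted, together with Proposition \ref{B_e} and Corollary \ref{bidon}, the present theorem is essentially an assembly through the Beauville--Laszlo description.
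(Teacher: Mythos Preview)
Your proposal is correct and follows essentially the same approach as the paper: use Corollary \ref{bidon} and Proposition \ref{B_e} for the two pieces of the Beauville--Laszlo description to get the vector bundle property, and deduce the vanishing from the corresponding assertion for overconvergent de Rham cohomology. Your treatment of the vanishing is simply a more explicit unpacking (via Proposition \ref{2lim} and Lemma \ref{chap13}) of what the paper states in one line; this is exactly the route already implicit in the proof of Proposition \ref{bplusdr}.
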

\begin{proof}
Indeed, Corollary \ref{bidon} tells that $\widehat{H_{\mathcal{FF}}^i(Z)_{\infty}}$ is a finite free $B_{\rm dR}^+$-module and Proposition \ref{B_e} that $H^0(X_{C^{\flat}} \backslash \{ \infty \}, H_{\mathcal{FF}}^i(Z))$ is a finite free $B_e$-module.

The last sentence is a direct corollary of the corresponding assertion for overconvergent de Rham cohomology.
\end{proof}

\begin{remarque} \label{isocristal}
Assume that $C$ is the completion of an algebraic closure of $W(k)[1/p]$. Let $i\geq 0$. Then for any smooth quasi-compact rigid space $Z$ defined over $K$, one gets an isocrystal $H_{\mathrm{isoc}}^i(Z)$. Indeed, let $\mathcal{E}$ be the functor associating to an isocrystal $D$ the vector bundle $\mathcal{E}(D)$ on $X_{C^{\flat}}$ whose geometric realization is given by $Y_{C^{\flat}} \times_{\varphi} D$ ; let $\mathrm{gr}$ be the functor with values in the category of $\q$-graded vector bundles on $X_{C^{\flat}}$, sending a vector bundle $\mathcal{E}$ on $X_{C^{\flat}}$ to the direct sum of the graded parts of the grading given by its Harder-Narasimhan filtration. The composite functor $\mathrm{gr} \circ \mathcal{E}$ induces an equivalence between the category $\varphi-\mathrm{Mod}_{\breve{\q}_p}$ and the category of $\q$-graded vector bundles on $X_{C^{\flat}}$ such that for each $\lambda \in \q$, the $\lambda$-graded part is semi-stable of slope $\lambda$ (\cite[Lem. 3.6]{anschuetz}). Applying this to the direct sum of the graded parts of $H_{\mathcal{FF}}^i(Z)$, with the grading induced by its Harder-Narasimhan filtration, we obtain the isocrystal $H_{\rm isoc}^i(Z)$. 

If one moreover assumes that $Z_C$ has a proper smooth formal model $\mathfrak{Z}$ over $\O_{C}$, $H_{\mathrm{isoc}}^i(Z)$ is the isocrystal underlying $H_{\rm crys}^i(\mathfrak{Z}_s)$\footnote{We are implicitly using here the fact that the categories of isocrystals over $\overline{\mathbf{F}_p}$ and $k$ are the same.}, for all $i\geq 0$, with $\mathfrak{Z}_s = \mathfrak{Z} \times_{\O_{K}} k$, as we will see in the next section.

Unfortunately a quasi-inverse of the above equivalence is not necessarily exact, so we don't get a well-behaved cohomology with values in the category of isocrystals that way. Such a theory exists so far only at the level of formal schemes (in accordance with Scholze's cohomological picture, \cite[\S 8]{SICM}).  
\end{remarque}

\begin{remarque}
Let $K$ be a discretely valued extension of $\qp$ with perfect residue field and $i\geq 0$. Let $Z$ be a smooth quasi-compact rigid space over $K$. By Proposition \ref{B_e}, if we set 
\[ H_{\mathrm{isoc},+}^i(Z) := \bigcup_{[K':K] < \infty} (H^0(X \backslash \{ \infty \}, H_{\mathcal{FF}}^i(Z)) \otimes_{B_e} B_{\rm st})^{\mathcal{G}_{K'}}. \]
this defines a $(\varphi,N,\mathcal{G}_K)$-module over $W(k)[1/p]$, such that if $Z$ has a proper semi-stable formal model $\mathfrak{Z}$, 
\[ H_{\mathrm{isoc},+}^i(Z) \simeq H_{\rm logrig}^i(\mathfrak{Z}_s) \otimes_{K_0} W(k)[1/p], \]
for all $i\geq 0$, by the classical comparison theorems in $p$-adic Hodge theory.

The isocrystal underlying $H_{\mathrm{isoc}, +}^i(Z)$ is $H_{\rm isoc}^i(Z)$. The recipe of Remark \ref{isocristal} does not allow to recover $H_{\mathrm{isoc},+}^i(Z)$ with its additional structures, but it has the advantage that it does not use the Galois action and thus works over any $C$ as in the corollary.
\end{remarque}

\section{Relation with other cohomology theories} \label{relation}

In this section, we give proofs of the assertions of Remarks \ref{propercase} and \ref{semistablecase}, and discuss briefly the relation of the d\'ecalage functor $L\eta_t$ with the pro-\'etale cohomology of $\qp$ and syntomic cohomology on rigid spaces.
\\

\textit{Proof of Remark \ref{propercase}.} Let $K$ be a discretely valued extension of $\qp$ with perfect residue field $\kappa$ and $V$ be a $p$-adic representation of $\mathcal{G}_K$, which is semi-stable. Let
\[ D_{\rm st}(V)= (V \otimes_{\qp} B_{\rm st})^{\mathcal{G}_K}. \]
This a $(\varphi,N)$-module over $K$. To such a $(\varphi,N)$-module $D$, Fargues and Fontaine \cite[Ch. 10]{FF} attach a $\mathcal{G}_K$-equivariant vector bundle $\mathcal{E}(D)$ on $X_{C^{\flat}}$ corresponding to the $\mathcal{G}_K$-equivariant $B$-pair
\[ ((D \otimes_{K_0} B_{\rm st})^{\varphi=1,N=0}, D \otimes_{K_0} B_{\rm dR}^+) \]
($K_0$ is the maximal unramified extension of $W(\kappa)[1/p]$ contained in $K$). If $V$ is only assumed to be de Rham, there exists by Berger's theorem a finite Galois extension $L$ of $K$ such that $V_{|_{\mathcal{G}_L}}$ is semi-stable : it gives rise to a $\mathcal{G}_L$-equivariant vector bundle on $X_{C^{\flat}}$ with a descent datum from $L$ to $K$ (\cite[Def. 10.6.3]{FF}), that is to a $\mathcal{G}_K$-equivariant vector bundle on $X_{C^{\flat}}$ (\cite[Prop 10.6.4]{FF}). This vector bundle corresponds to the $\mathcal{G}_K$-equivariant $B$-pair :
\[ ((D_{\rm pst}(V) \otimes_{W(k)[1/p]} B_{\rm st})^{\varphi=1,N=0}, D_{\rm pst}(V) \otimes_{W(k)[1/p]} B_{\rm dR}^+), \]
where
\[ D_{\rm pst}(V) = \bigcup_{[K':K]<\infty} D_{\rm st}(V_{|_{\mathcal{G}_{K'}}}). \]

Let $Z$ be a proper smooth rigid space defined over $K$, and $i\geq 0$. We know by \cite{SHodge} that $H_{\mathrm{\acute{e}t}}^i(Z_{C}, \qp)$ is a de Rham representation.

\begin{proposition} 
The $\mathcal{G}_K$-equivariant vector bundle on $X_{C^{\flat}}$ attached to $H_{\mathrm{\acute{e}t}}^i(Z_{C}, \qp)$ by the above recipe is isomorphic to the $\mathcal{G}_K$-equivariant vector bundle $H_{\mathcal{FF}}^i(Z)$. 
\end{proposition}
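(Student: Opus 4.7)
The plan is to match the $B$-pairs describing the two $\mathcal{G}_K$-equivariant vector bundles on $X_{C^{\flat}}$. Writing $V := H^i_{\mathrm{\acute{e}t}}(Z_C, \qp)$, by definition and Proposition~\ref{modeleformel} the bundle $H^i_{\mathcal{FF}}(Z)$ corresponds to the pair $\bigl((H^i(Z_C, \mathbb{B})^\dagger[1/t])^{\varphi=1},\, H^i(Z_C, \mathbb{B}_{\rm dR}^+)^\dagger\bigr)$, whereas $\mathcal{E}(D_{\rm pst}(V))$ corresponds to $\bigl((D_{\rm pst}(V) \otimes_{W(k)[1/p]} B_{\rm st})^{\varphi=1, N=0},\, D_{\rm pst}(V) \otimes_{W(k)[1/p]} B_{\rm dR}^+\bigr)$. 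Since $V$ is de Rham by \cite[Thm.~8.4]{SHodge}, Berger's theorem produces a finite Galois extension $L/K$ over which $V|_{\mathcal{G}_L}$ becomes semi-stable; by the descent equivalence \cite[Prop.~10.6.4]{FF} it will suffice to exhibit a $\mathcal{G}_L$-equivariant isomorphism of $B$-pairs compatible with the descent datum, so I henceforth assume $V$ is semi-stable.

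For the $\infty$-part, since $K$ is a finite extension of $W(k)[1/p]$ and $\qp(Z)$ is a compact motive over $K$ (by Theorem~\ref{ayoub}), applying Lemma~\ref{chap13} with $F=K$ and $\mathcal{C}=\qp(Z)$ yields $H^i(Z_C, \mathbb{B}_{\rm dR}^+)^\dagger \simeq H^i_{\rm dR}(Z/K) \otimes_K B_{\rm dR}^+$, using that overconvergent de Rham cohomology of a smooth proper variety coincides with its usual de Rham cohomology. Scholze's de Rham comparison identifies $H^i_{\rm dR}(Z/K)$ with $D_{\rm dR}(V)$, and the canonical Hyodo--Kato isomorphism $D_{\rm pst}(V) \otimes_{W(k)[1/p]} K \simeq D_{\rm dR}(V)$ combined with Fontaine's $B_{\rm dR}$-comparison identifies the $B_{\rm dR}^+$-lattice $D_{\rm dR}(V) \otimes_K B_{\rm dR}^+$ with $D_{\rm pst}(V) \otimes_{W(k)[1/p]} B_{\rm dR}^+$ as sub-$B_{\rm dR}^+$-modules of $V \otimes_{\qp} B_{\rm dR}$, matching the $\infty$-parts.

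For the $e$-part, since $L\eta_t$ acts as the identity on complexes of $B[1/t]$-modules, Proposition~\ref{propre\'egal} combined with Proposition~\ref{proprepro\'et} applied to the sheaf $\mathbb{B}[1/t]$ gives $H^i(Z_C, \mathbb{B})^\dagger[1/t] \simeq V \otimes_{\qp} B[1/t]$ as $\varphi$-modules; taking $\varphi=1$-invariants yields $V \otimes_{\qp} B_e$ for the $e$-part of $H^i_{\mathcal{FF}}(Z)$. By the semi-stable comparison theorem (which is now available at the level of pro-\'etale cohomology thanks to \cite{BMS}, \cite{CJ}), the natural map $V \otimes_{\qp} B_{\rm st} \to D_{\rm pst}(V) \otimes_{W(k)[1/p]} B_{\rm st}$ is an isomorphism of $(\varphi, N, \mathcal{G}_K, B_{\rm st})$-modules; taking $(\varphi=1, N=0)$-invariants identifies $V \otimes_{\qp} B_e$ with $(D_{\rm pst}(V) \otimes_{W(k)[1/p]} B_{\rm st})^{\varphi=1, N=0}$, matching the $e$-parts.

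The main obstacle will be to verify that these two identifications are compatible with the canonical glueing isomorphism $W_e \otimes_{B_e} B_{\rm dR} \simeq W_{\rm dR}^+ \otimes_{B_{\rm dR}^+} B_{\rm dR}$ realized inside $V \otimes_{\qp} B_{\rm dR}$ on both sides, and that every step is $\mathcal{G}_K$-equivariant; both will follow from the naturality of all the comparison theorems invoked and the compatibility of Scholze's $C_{\rm dR}$ with the potentially semi-stable comparison. A final Galois descent from $L$ to $K$ via \cite[Prop.~10.6.4]{FF} then produces the claimed $\mathcal{G}_K$-equivariant isomorphism of vector bundles on $X_{C^{\flat}}$.
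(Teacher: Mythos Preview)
Your proof is correct and follows essentially the same route as the paper: compute the $B$-pair underlying $H_{\mathcal{FF}}^i(Z)$ via Propositions~\ref{propre\'egal}, \ref{proprepro\'et}, \ref{modeleformel} and Lemma~\ref{chap13}, then match it with the $B$-pair of $\mathcal{E}(D_{\rm pst}(V))$ using the comparison isomorphisms. Two minor remarks: (i) the explicit reduction to the semi-stable case via Berger's theorem and descent is unnecessary, since once you have identified the $B$-pair of $H_{\mathcal{FF}}^i(Z)$ as $(V\otimes_{\qp} B_e,\, H_{\rm dR}^i(Z)\otimes_K B_{\rm dR}^+)$ the comparison with $D_{\rm pst}$ holds directly for any de Rham $V$; (ii) your invocation of Lemma~\ref{chap13} ``with $F=K$'' is slightly off, because in this section $k$ denotes the residue field of $C$ (hence $\bar{\kappa}$, not $\kappa$), so $K$ is not a finite extension of $W(k)[1/p]$ --- the paper sidesteps this by citing \cite[Ch.~13]{BMS} as an alternative, and for proper smooth $Z$ one can simply appeal to Proposition~\ref{propre\'egal} together with Scholze's de Rham comparison directly.
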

\begin{proof}
We will check that $H_{\mathcal{FF}}^i(Z)$ corresponds to the same $\mathcal{G}_K$-equivariant $B$-pair. By Proposition \ref{propre\'egal}, we have
\[ R\Gamma(Z_{C}, \mathbb{B})^{\dagger} = R\Gamma_{\mathrm{\acute{e}t}}(Z_{\cp}, L\eta_t R\nu'_* \mathbb{B}). \]
After inverting $t$, this is just
\[ R\Gamma_{\mathrm{pro\acute{e}t}}(Z_{C},  \mathbb{B}[1/t]) = R\Gamma_{\mathrm{\acute{e}t}}(Z_{C}, \qp) \otimes_{\qp} B[1/t]. \]
Around $\infty$, it's given by $R\Gamma_{\mathrm{\acute{e}t}}(Z_{C}, L\eta_t R\nu'_* \mathbb{B}_{\rm dR}^+)$. Using \ref{chap13} or \cite[Ch. 13]{BMS},
\[ H^i(R\Gamma_{\mathrm{\acute{e}t}}(Z_{C}, L\eta_t R\nu'_* \mathbb{B}_{\rm dR}^+)) =   H_{\rm dR}^i(Z) \otimes_K B_{\rm dR}^+. \]
In other words, $H_{\mathcal{FF}}^i(Z)$ corresponds to the $B$-pair
\[ (H_{\mathrm{\acute{e}t}}^i(Z_{C}, \qp) \otimes_{\qp} B_e, H_{\rm dR}^i(Z) \otimes_K B_{\rm dR}^+). \] 
The comparison theorem gives an isomorphism
\[ H_{\mathrm{\acute{e}t}}^i(Z_{C}, \qp) \otimes_{\qp} B_{\rm st} = D_{\rm pst}(H_{\mathrm{\acute{e}t}}^i(Z_{C}, \qp)) \otimes_{W(k)[1/p]} B_{\rm st} \]
compatible with $\varphi$, $N$ and the $\mathcal{G}_K$-action on both sides. Taking $\varphi=1$, $N=0$ shows that the two $B_e$-modules with $\mathcal{G}_K$-action considered are the same. It is also true for the two $B_{\rm dR}^+$-modules, since
\[ H_{\rm dR}^i(Z) \otimes_K B_{\rm dR}^+ = D_{\rm pst}(H_{\mathrm{\acute{e}t}}^i(Z_{C}, \qp)) \otimes_{W(k)[1/p]} B_{\rm dR}^+. \] 
\end{proof}

\begin{remarque} \label{hyodokato}
When $Z$ has a proper semi-stable formal model $\mathfrak{Z}$ over $\O_K$, the $(\varphi,N)$-module $D_{\rm st}(H_{\mathrm{\acute{e}t}}^i(Z_C,\qp))$ is isomorphic to the Hyodo-Kato (or log-rigid) cohomology $H_{\mathrm{logrig}}^i(\mathfrak{Z}_s)$. 
\end{remarque}

\textit{Proof of Remark \ref{semistablecase}.} We start with a

\begin{conj} \label{compconj}
Let $\mathfrak{Z}$ be a smooth quasi-compact formal scheme over $\O_C$. Let $Z$ be its rigid generic fiber and $\mathfrak{Z}_s=\mathfrak{Z} \times_{\O_C} k$ its special fiber. Then for all $i \geq 0$,
\[  H_{\mathcal{FF}}^i(Z) = \mathcal{E}(H_{\rm rig}^i(\mathfrak{Z}_s)). \]
\end{conj}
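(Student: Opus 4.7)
The plan is to identify both sides as $\varphi$-modules over $B$ via the equivalence with vector bundles on $X_{C^\flat}$: the right-hand side $\mathcal{E}(H_{\rm rig}^i(\mathfrak{Z}_s))$ corresponds, by definition of $\mathcal{E}$, to the trivial $\varphi$-module $H_{\rm rig}^i(\mathfrak{Z}_s) \otimes_{W(k)[1/p]} B$ (with diagonal Frobenius), while the left-hand side $H_{\mathcal{FF}}^i(Z)$ corresponds to $H^i(Z_C, \mathbb{B})^{\dagger}$. As the conjecture is open in general, I would first focus on the proper case, which is the one actually proved in Remark \ref{semistablecase}, using the $A_{\rm inf}$-cohomology of \cite{BMS} as the intermediary between the generic and special fibers.

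In the proper smooth case I would proceed in three steps. First, for any compact interval $I \subset ]0,1[$, Proposition \ref{cj} combined with the Frobenius-twist trick of Remark \ref{egalomega} gives the $\varphi$-equivariant identification
\[ R\Gamma_{\mathrm{\acute{e}t}}(Z, L\eta_t R\nu'_* \mathbb{B}_I) \simeq \bigl( R\Gamma_{\mathrm{\acute{e}t}}(\mathfrak{Z}, L\eta_{\mu} R\nu_*\mathbb{A}_{\rm inf}) \,\hat{\otimes}_{A_{\rm inf}}\, A_I \bigr)[1/p]. \]
Second, I would invoke the BMS crystalline specialization: in the proper smooth case $R\Gamma_{A_{\rm inf}}(\mathfrak{Z})$ is a perfect $\varphi$-equivariant $A_{\rm inf}$-complex, and
\[ R\Gamma_{A_{\rm inf}}(\mathfrak{Z}) \otimes^{L}_{A_{\rm inf}} W(k) \simeq R\Gamma_{\rm crys}(\mathfrak{Z}_s / W(k)) \]
along a suitable (Frobenius-twisted) map $A_{\rm inf} \to W(k)$. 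Third, the composite $A_{\rm inf} \to W(k) \to W(k)[1/p] \to B_I$ is well defined, and since $R\Gamma_{A_{\rm inf}}(\mathfrak{Z})$ is perfect the completed tensor product collapses, after inverting $p$, to an ordinary tensor product; using that rigid cohomology coincides with crystalline cohomology after $p$-inversion in the proper smooth case, this yields
\[ R\Gamma_{\mathrm{\acute{e}t}}(Z, L\eta_t R\nu'_*\mathbb{B}_I) \simeq H_{\rm rig}^\bullet(\mathfrak{Z}_s) \otimes_{W(k)[1/p]} B_I \]
compatibly with Frobenius.

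To conclude, one glues over $I$: by Lemma \ref{44} and the $R^1\varprojlim$-vanishing argument of Proposition \ref{B_I}, the inverse limit over $I$ produces
\[ H^i(Z_C, \mathbb{B})^{\dagger} \simeq H_{\rm rig}^i(\mathfrak{Z}_s) \otimes_{W(k)[1/p]} B \]
as $\varphi$-modules over $B$, which is exactly the sought identification $H_{\mathcal{FF}}^i(Z) \simeq \mathcal{E}(H_{\rm rig}^i(\mathfrak{Z}_s))$. The main obstacle to extending this to the full quasi-compact smooth case is that the BMS filtration-and-specialization machinery genuinely requires properness: without it one has neither the perfectness of $R\Gamma_{A_{\rm inf}}(\mathfrak{Z})$ nor the crystalline specialization in the form needed above. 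The motivic formalism developed earlier in the paper is powerful enough to reduce \emph{finiteness} statements to the proper case, but the comparison with the rigid cohomology of the special fiber is not of a motivic nature on the generic fiber, which is precisely why the statement must remain a conjecture in general.
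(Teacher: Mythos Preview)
Your overall strategy in the proper case is the same as the paper's, but step~3 contains a genuine gap. The map $A_{\rm inf} \to B_I$ that appears in step~1 (coming from Proposition~\ref{cj}) is the \emph{natural} localization-and-completion map; it does \emph{not} factor through $W(k)$. The composite $A_{\rm inf} \to W(k) \to W(k)[1/p] \to B_I$ you write down is a different ring map (the first arrow kills $W(\mathfrak{m}^{\flat})$, which certainly does not die in $B_I$). So you cannot pass directly from the crystalline specialization $R\Gamma_{A_{\rm inf}}(\mathfrak{Z}) \otimes^{L}_{A_{\rm inf}} W(k) \simeq R\Gamma_{\rm crys}(\mathfrak{Z}_s/W(k))$ to the identification you want over $B_I$.

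The paper fixes exactly this point by inserting $B_{\rm crys}^+$: for $I \subset ]p^{-1},1[$ the natural map $A_{\rm inf} \to B_I$ \emph{does} factor through $A_{\rm crys} \to B_{\rm crys}^+$, and the BMS crystalline comparison (\cite[Th.~12.1, Prop.~13.9]{BMS}) gives
\[
H^i_{\mathrm{\acute{e}t}}(\mathfrak{Z}, L\eta_{\mu} R\nu_* \mathbb{A}_{\rm inf}) \otimes_{A_{\rm inf}} B_{\rm crys}^+ \;\simeq\; H_{\rm crys}^i(\mathfrak{Z}_s) \otimes_{W(k)[1/p]} B_{\rm crys}^+,
\]
which one then base-changes to $B_I$. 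After this correction your argument is essentially the paper's; note also that the paper concludes not by taking an inverse limit over all $I$, but by observing that a Frobenius-compatible isomorphism over a single interval $I=[a,b]$ with $a \leq b/p$ already determines the coherent sheaf on $X_{C^{\flat}}$, which is a bit cleaner than invoking Lemma~\ref{44}. Your closing remarks on why the non-proper case remains conjectural are accurate.
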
 

\begin{remarks} a) One can also formulate a similar conjecture in the semi-stable case, using log-rigid cohomology.

b) If true, this conjecture allows to give an alternative definition of rigid cohomology for smooth varieties over $k$, inspired by Vezzani \cite{Vezzani2}. Let $W$ be a smooth scheme over $k$. Fix a section $k \to \O_{C^{\flat}}$ of the canonical projection. Let $Z^{\flat}$\footnote{Beware that $Z^{\flat}$ is not the tilt of anything. It is just a notation.}  be the analytification of the base change of $W$ to $C^{\flat}$ and denote by $Z$ the rigid motive over $C$ attached to $Z^{\flat}$ by the equivalence of \cite{FW} between rigid motives over $C$ and $C^{\flat}$. The for all $i \geq 0$, the isocrystals $H_{\rm rig}^i(W)$ and $H_{\rm isoc}^i(Z)$ (defined in Remark \ref{isocristal}) are isomorphic.
\end{remarks}

We now prove Conjecture \ref{compconj} in the proper case, in which case rigid cohomology is just crystalline cohomology. It is an easy consequence of the results of \cite[Ch. 12, 13]{BMS}.
\begin{proposition} \label{criscomp}
Let $\mathfrak{Z}$ be a smooth proper formal scheme over $\O_C$. Let $Z$ be its rigid generic fiber and $\mathfrak{Z}_s=\mathfrak{Z} \times_{\O_C} k$ its special fiber. Then for all $i \geq 0$,
\[  H_{\mathcal{FF}}^i(Z) = \mathcal{E}(H_{\rm crys}^i(\mathfrak{Z}_s)). \]
\end{proposition}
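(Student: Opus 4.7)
The plan is to identify the coadmissible $\varphi$-module over $B$ underlying $H_{\mathcal{FF}}^i(Z)$ with $H_{\rm crys}^i(\mathfrak{Z}_s) \otimes_{W(k)[1/p]} B$, which by construction of $\mathcal{E}$ is the $\varphi$-module corresponding to $\mathcal{E}(H^i_{\rm crys}(\mathfrak{Z}_s))$. Since $\mathfrak{Z}$ is proper, $Z$ is proper smooth, so Proposition \ref{propre\'egal} applied to $\mathbb{B}$ and to each $\mathbb{B}_I$ rewrites everything on the left in terms of pro-\'etale cohomology of the generic fiber,
\[ R\Gamma(Z_C,\mathbb{B})^\dagger \simeq R\Gamma_{\mathrm{\acute{e}t}}(Z_C, L\eta_t R\nu'_* \mathbb{B}), \qquad R\Gamma(Z_C,\mathbb{B}_I)^\dagger \simeq R\Gamma_{\mathrm{\acute{e}t}}(Z_C, L\eta_t R\nu'_* \mathbb{B}_I). \]

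First I would localize on $\mathfrak{Z}$ and apply Proposition \ref{cj} (smoothness being a special case of semi-stability): for every compact interval $I \subset \,]p^{-1},1[\,$,
\[ R\Gamma_{\mathrm{\acute{e}t}}(Z_C, L\eta_t R\nu'_* \mathbb{B}_I) \simeq \bigl( R\Gamma_{\mathrm{\acute{e}t}}(\mathfrak{Z}, L\eta_{\mu} R\nu_* \mathbb{A}_{\mathrm{inf}}) \hat{\otimes}_{A_{\mathrm{inf}}} A_I \bigr)[1/p]. \]
Then I would invoke the BMS crystalline comparison theorem for the $A_{\mathrm{inf}}$-cohomology complex (\cite[Ch.~12--13]{BMS}): the complex $R\Gamma_{A_{\mathrm{inf}}}(\mathfrak{Z})$ is perfect over $A_{\mathrm{inf}}$, and its Frobenius pullback satisfies
\[ \varphi^{*} R\Gamma_{A_{\mathrm{inf}}}(\mathfrak{Z}) \otimes^L_{A_{\mathrm{inf}}} B_{\mathrm{crys}}^+ \simeq R\Gamma_{\mathrm{crys}}(\mathfrak{Z}_s/W(k)) \otimes_{W(k)} B_{\mathrm{crys}}^+, \]
compatibly with $\varphi$. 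Base changing this to $B_I$ (first to $B_{\mathrm{crys}}^+$, then further to $B_I$ after a Frobenius twist that exchanges $I$ with $\varphi(I)$, which one may always arrange since $B_I \simeq B_{\varphi(I)}$ via $\varphi$) produces a $\varphi$-equivariant identification
\[ H^i_{\mathrm{\acute{e}t}}(Z_C, L\eta_t R\nu'_* \mathbb{B}_I) \simeq H^i_{\mathrm{crys}}(\mathfrak{Z}_s) \otimes_{W(k)[1/p]} B_I, \]
where the left-hand side is concentrated in the single degree $i$ thanks to the perfectness and the degeneration already used in the proof of Proposition \ref{B_I}.

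Finally I would pass to the inverse limit over $I$. Exactly as in the proof of Proposition \ref{B_I}, Lemma \ref{44} combined with surjectivity of the restriction maps $B_{I'} \to B_I$ for $I \subset I'$ (and the finite-dimensionality of $H^i_{\mathrm{crys}}(\mathfrak{Z}_s)$ over $W(k)[1/p]$) makes the relevant $R^1\varprojlim$ vanish, yielding
\[ H^i(Z_C,\mathbb{B})^\dagger \simeq H^i_{\mathrm{crys}}(\mathfrak{Z}_s) \otimes_{W(k)[1/p]} B \]
as $\varphi$-equivariant coadmissible $B$-modules. Via the equivalence between such modules and coherent sheaves on $X_{C^{\flat}}$, this is precisely the identity $H_{\mathcal{FF}}^i(Z) = \mathcal{E}(H_{\mathrm{crys}}^i(\mathfrak{Z}_s))$.

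The main obstacle in carrying out the plan is the careful bookkeeping of Frobenius: the BMS crystalline comparison naturally intervenes with a Frobenius twist, whereas the functor $\mathcal{E}$ is defined by descent along $\varphi^{\mathbf{Z}}$ acting on $Y_{C^{\flat}}$, so one has to check that the individual identifications over each $B_I$ assemble to a single $\varphi$-equivariant isomorphism over $B$ rather than merely a compatible family of $B_I$-linear isomorphisms.
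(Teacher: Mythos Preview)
Your proposal is correct and follows essentially the same route as the paper: apply Proposition~\ref{propre\'egal} to remove the dagger, use Proposition~\ref{cj} for $I\subset\,]p^{-1},1[$ to pass to the $A_{\mathrm{inf}}$-cohomology of $\mathfrak{Z}$, and then invoke the BMS crystalline comparison (via $B_{\mathrm{crys}}^+$) to identify the result with $H^i_{\mathrm{crys}}(\mathfrak{Z}_s)\otimes_{W(k)[1/p]} B_I$. The only cosmetic difference is in the closing step: the paper stops at the $\varphi$-compatible family of $B_I$-isomorphisms and appeals directly to the description of coherent sheaves on $X_{C^\flat}$ given at the start of Section~\ref{Construction}, whereas you take the inverse limit over $I$ to obtain a $\varphi$-equivariant isomorphism of coadmissible $B$-modules; these are two phrasings of the same passage through the equivalence $\mathrm{Coh}_{X_{C^\flat}}\simeq\{\text{coadmissible }\varphi\text{-modules over }B\}$, and your caveat about Frobenius bookkeeping is exactly the point the paper handles by the phrase ``compatible with Frobenius structures in the obvious sense''.
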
 
\begin{proof}
Let $i \geq 0$ and $I$ be a compact interval contained in $]p^{-1},1[$. By Proposition \ref{propre\'egal}, we know that
\[ H^i(Z,\mathbb{B}_I)^{\dagger} = H_{\mathrm{\acute{e}t}}^i(Z,L\eta_t R\nu'_* \mathbb{B}_I). \]
Applying Proposition \ref{cj}, we get :
\[ H^i(Z,\mathbb{B}_I)^{\dagger} \simeq H_{\mathrm{\acute{e}t}}^i(Z,L\eta_{\mu} R\nu_* \mathbb{A}_{\rm inf}) \otimes_{A_{\rm inf}} B_I \]
(the cohomology groups on the right are finitely presented $A_{\rm inf}$-modules). Our choice of $I$ allows to rewrite the right hand side as :
\[ H_{\mathrm{\acute{e}t}}^i(Z,L\eta_{\mu} R\nu_* \mathbb{A}_{\rm inf}) \otimes_{A_{\rm inf}} B_I = (H_{\mathrm{\acute{e}t}}^i(Z,L\eta_{\mu} R\nu_* \mathbb{A}_{\rm inf}) \otimes_{A_{\rm inf}} B_{\rm crys}^+) \otimes_{B_{\rm crys}^+} B_I. \] 
Now \cite[Th. 12.1]{BMS}, combined with \cite[Prop. 13.9]{BMS}, tells us that
\[ H_{\mathrm{\acute{e}t}}^i(Z,L\eta_{\mu} R\nu_* \mathbb{A}_{\rm inf}) \otimes_{A_{\rm inf}} B_{\rm crys}^+ \simeq H_{\rm crys}^i(\mathfrak{Z}_s) \otimes_{W(k)[1/p]} B_{\rm crys}^+. \]
Hence we have an isomorphism :
\[ H^i(Z,\mathbb{B}_I)^{\dagger} \simeq H_{\rm crys}^i(\mathfrak{Z}_s) \otimes_{W(k)[1/p]} B_I. \]
This isomorphism holds for any compact interval $I \subset ]p^{-1},1[$ and is compatible with Frobenius structures in the obvious sense (see the beginning of Paragraph \ref{Construction}). It therefore implies the isomorphism of the proposition.
\end{proof}

\begin{remarque} 
Using Proposition \ref{cj} and the results of \u{C}esnavi\u{c}ius- Koshikawa (namely, \cite[Th. 5.4]{CJ} and \cite[Prop. 9.2]{CJ}), one can extend the above proof to the semi-stable case.
\end{remarque}

\textit{Geometric syntomic cohomology : \'etale-syntomic comparison theorem.} The reader is referred to \cite[\S 2.8]{theseaclb} for some results regarding the relation between syntomic cohomology and the d\'ecalage functor $L\eta_t$. In this paragraph, we simply explain how to give a short proof of the \'etale-syntomic comparison theorem in the proper good reduction case, which is essentially a reformulation of the proof of Colmez-Nizio\l{} \cite{CN} in the language of Bhatt-Morrow-Scholze \cite{BMS}. We start with an easy generalization of Lemma \ref{induction}. 

\begin{lemme} \label{synto}
We keep the notations of Lemma \ref{induction}. For $r \geq 0$, let $\epsilon_r= r+\delta_r$. For any $r\geq 0$, one has a distinguished triangle
\[ \eta_{\epsilon_r,f} K^{\bullet} \longrightarrow \eta_{f} K^{\bullet} \longrightarrow (\eta_f K^{\bullet}/\mathrm{Fil}^r) \overset{+1} \longrightarrow ~ , \]
the filtration being the tensor product of the filtration b\^ete by the $f$-adic filtration.
\end{lemme}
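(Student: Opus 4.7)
The plan is to reduce the statement to a direct termwise unwinding of the definition of $\eta_{\delta,f}$, in the spirit of Lemma \ref{induction}. First I would rewrite the weight function: since $\delta_r(j) = \max(0, j - r)$, one has $\epsilon_r(j) = r + \delta_r(j) = \max(r, j)$. This immediately gives a termwise description of $\eta_{\epsilon_r,f} K^{\bullet}$: for $j \geq r$ the functions $\epsilon_r$ and $j \mapsto j$ agree at $j$ and $j+1$, so $(\eta_{\epsilon_r,f} K^{\bullet})^j = (\eta_f K^{\bullet})^j$; while for $j < r$ one has $\epsilon_r(j) = \epsilon_r(j+1) = r$, and since $d(f^r y) = f^r dy$ automatically lies in $f^r K^{j+1}$, the definition collapses to $(\eta_{\epsilon_r,f} K^{\bullet})^j = f^r K^j$.

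From this explicit description I would verify that $\eta_{\epsilon_r,f} K^{\bullet}$ is an honest subcomplex of $\eta_f K^{\bullet}$: the inclusion $f^r K^j \subset (\eta_f K^{\bullet})^j$ for $j < r$ holds because $r \geq j+1$ makes the $\eta_f$ condition automatic, and stability under $d$ is trivial except at the transition degree $j = r-1$, where $d(f^r K^{r-1}) \subset f^r K^r \subset (\eta_f K^{\bullet})^r$. This yields a short exact sequence of complexes of $A$-modules
\[ 0 \longrightarrow \eta_{\epsilon_r,f} K^{\bullet} \longrightarrow \eta_f K^{\bullet} \longrightarrow Q^{\bullet} \longrightarrow 0, \]
and hence a distinguished triangle in $D(A)$.

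It remains to identify the subcomplex $\eta_{\epsilon_r,f} K^{\bullet}$ with the $r$-th step of the tensor product of the stupid filtration $\sigma^{\geq \bullet}$ with the $f$-adic filtration $f^{\bullet}$. Using the convention in which this combined filtration restricted to $\eta_f K^{\bullet}$ is
\[ \mathrm{Fil}^r := \bigl( \sigma^{\geq r} K^{\bullet} + f^r K^{\bullet} \bigr) \cap \eta_f K^{\bullet}, \]
one recovers exactly the subcomplex just described: in degree $j \geq r$ the first summand contributes all of $(\eta_f K^{\bullet})^j$, while in degree $j < r$ the second summand contributes $f^r K^j \cap (\eta_f K^{\bullet})^j = f^r K^j$, the $\eta_f$ constraint being automatic as in the first step. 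The proof is thus a direct bookkeeping exercise and presents no serious obstacle; the only point requiring care is fixing the convention for the tensor product of the two filtrations, after which everything lines up with the explicit termwise description of $\eta_{\epsilon_r,f} K^{\bullet}$.
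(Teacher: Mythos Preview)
Your proof is correct and is precisely the direct termwise verification that the paper has in mind: the paper does not write out a proof at all, merely calling the lemma ``an easy generalization of Lemma~\ref{induction}'' and drawing the graph of the weight function $\epsilon_r(j)=\max(r,j)$. Your explicit identification of $(\eta_{\epsilon_r,f}K^\bullet)^j$ as $(\eta_f K^\bullet)^j$ for $j\geq r$ and $f^rK^j$ for $j<r$, together with the observation that the $\eta_f$-constraint is automatic in the latter range, is exactly what underlies the picture. You are also right to flag that the only subtle point is fixing what ``tensor product of the stupid and $f$-adic filtrations'' means on $\eta_f K^\bullet$; with the convention you state, $\mathrm{Fil}^r$ coincides on the nose with $\eta_{\epsilon_r,f}K^\bullet$ and the triangle comes from a genuine short exact sequence of complexes.
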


\setlength{\unitlength}{4cm}
\begin{center}
\begin{picture}(1.5,1.3)
\put(0.1,0.1){\vector(0,1){1}}
\put(0.1,0.1){\vector(1,0){1}}
\put(0.1,0.6){\line(1,0){0.5}}
\multiput(0.6,0.12)(0,0.1){5}
{\line(0,1){0.05}}
\put(0.6,0.6){\line(1,1){0.5}}
\put(0,0.6){$r$}
\put(0.6,0){$r$}
\put(0.8,0.7){$\epsilon_r$}
\end{picture}
\end{center}

We will apply the lemma to the complex $R\nu'_* \mathbb{B}$, and $f=t$. Assume that $K$ is a complete discretely valued extension of $\qp$ with perfect residue field. Let $\mathfrak{Z}$ be a smooth proper formal scheme over $\O_K$, with rigid generic fiber $Z$, and $0 \leq i \leq r$. The $\varphi$-module $H^i(Z_C,L\eta_{\epsilon_r,t} R\nu'_*t^r \mathbb{B})$ over $B$ gives rise to a sheaf on $X_{C^{\flat}}$ and we denote by $\mathcal{E}^i(Z,r)$ its twist by $\O_{X_{C^{\flat}}}(r)$. We claim that its global sections are isomorphic to the geometric syntomic cohomology group $H_{\rm syn}^i(\mathfrak{Z}_{\O_C},r)[1/p]$. Indeed, Lemma \ref{synto} gives a long exact sequence of sheaves on the Fargues-Fontaine curve :
\[ \dots \to  i_{\infty,*} H^{i-1}(Z_C, L\eta_t R\nu'_* \mathbb{B}_{\rm dR}^+/ \mathrm{Fil}^r) \to \mathcal{E}^i(Z,r) \to H_{\mathcal{FF}}^i(Z)  \otimes \O_{X_{C^{\flat}}}(r) \]
\[ \to  i_{\infty,*} H^i(Z_C, L\eta_t R\nu'_* \mathbb{B}_{\rm dR}^+/ \mathrm{Fil}^r) \to \dots \]     
By Proposition \ref{criscomp}, we know that 
\[ H_{\mathcal{FF}}^i(Z) \otimes_{\O_{X_{C^{\flat}}}} \O_{X_{C^{\flat}}}(r) = \mathcal{E}(H_{\rm cris}^i(\mathfrak{Z}_s)) \otimes_{\O_{X_{C^{\flat}}}} \O_{X_{C^{\flat}}}(r). \]
This vector bundle has positive slopes, since the isocrystal\footnote{Recall that the slopes of the vector bundle are the opposites of the slopes of the isocrystal.} $H_{\rm cris}^i(\mathfrak{Z}_s)$ has slopes between $0$ and $i$ (\cite[Th. 3.1.2]{CLS}) and $i \leq r$. Hence $H_{\mathcal{FF}}^i(Z) \otimes_{\O_{X_{C^{\flat}}}} \O_{X_{C^{\flat}}}(r)$ and $H_{\mathcal{FF}}^i(Z) \otimes_{\O_{X_{C^{\flat}}}} \O_{X_{C^{\flat}}}(r)$ have no $H^1$ on the curve and the same is obviously true for the skyscraper sheaves showing up in the exact sequence. From this we deduce an exact sequence :
\[  (H_{\rm cris}^{i-1}(\mathfrak{Z}_s) \otimes B)^{\varphi=p^r} \to H^{i-1}(Z_C, L\eta_t R\nu'_* \mathbb{B}_{\rm dR}^+/ \mathrm{Fil}^r) \to \mathcal{E}^i(Z,r)  \]
\[     \to (H_{\rm cris}^{i}(\mathfrak{Z}_s) \otimes B)^{\varphi=p^r} \to H^{i}(Z_C, L\eta_t R\nu'_* \mathbb{B}_{\rm dR}^+/ \mathrm{Fil}^r).  \] 
It remains to compute the cohomology groups of $L\eta_t R\nu'_* \mathbb{B}_{\rm dR}^+/ \mathrm{Fil}^r$. We deduce from Poincar\'e's lemma (see the proof of Lemma \ref{chap13}) that :
\[ L\eta_t R\nu'_* \mathbb{B}_{\rm dR}^+ \simeq \Omega_Z^{\bullet} \hat{\otimes}_K B_{\rm dR}^+. \] 
As $Z$ is proper, this gives, for all $j\geq 0$ :
\[ H^{j}(Z_C,L\eta_t R\nu'_*  \mathbb{B}_{\rm dR}^+/ \mathrm{Fil}^r) = (H_{\rm dR}^{j}(Z) \otimes_K B_{\rm dR}^+)/\mathrm{Fil}^r. \]
This proves the claim, because of the description of geometric syntomic cohomology in terms of crystalline and de Rham cohomologies, \cite[3.2]{NN}.

But now we can reprove the \'etale-syntomic comparison theorem for free : indeed, one has a map
\[ L\eta_{\epsilon_r,t} R\nu'_* \mathbb{B} \to R\nu'_* t^r \mathbb{B} \]
which obviously induces an isomorphism between the truncations $\tau^{\leq r} L\eta_{\epsilon_r,t} R\nu'_* t^r \mathbb{B}$ and $\tau^{\leq r} R\nu'_* t^r \mathbb{B}$, by choice of the function $\epsilon_r$. But we know (Proposition \ref{proprepro\'et}) that for all $j$, 
\[ H_{\mathrm{pro\acute{e}t}}^j(Z_C,t^r \mathbb{B}) = H_{\mathrm{\acute{e}t}}^j(Z_C,\qp(r)) \otimes_{\qp} t^r B, \]
so for $i \leq r$, the sheaf $\mathcal{E}^i(Z,r)$ will just be the sheaf $H_{\mathrm{\acute{e}t}}^i(Z_C,\qp(r)) \otimes_{\qp} \O_{X_{C^{\flat}}}$. Taking global sections, we get that the above map induces an isomorphism 
\[  H_{\rm syn}^i(\mathfrak{Z}_{\O_C},r)[1/p] \simeq H_{\mathrm{\acute{e}t}}^i(Z_C,\qp(r)), \]
for every $i \leq r$. That this isomorphism is the same as the one induced by the Fontaine-Messing period map can probably be done as in \cite[\S 4.7]{CN} (the author did not check this). 
\\

\textit{Relation with the pro-\'etale cohomology of $\qp$.} Assume that $C$ is the completion of an algebraic closure of $W(k)[1/p]$. Let $Z$ be a Stein rigid space over $K$. Choose a Stein covering $(Z_k)_k$ of $Z$ (see Remark \ref{derhamlisse}).

One has an exact sequence\footnote{That a variant of such an exact sequence may be relevant to the study of the pro-\'etale cohomology of $\qp$ was suggested to the author by Gabriel Dospinescu and was used in \cite[\S 2.3]{theseaclb} to compute the pro-\'etale cohomology of $\qp$ on the affine space and the open disk.} of pro-\'etale sheaves on $Z_{C}$ :
\[ 0 \to \qp \to \mathbb{B}[1/t]^{\varphi=1} \to \mathbb{B}_{\rm dR}/\mathbb{B}_{\rm dR}^+ \to 0. \]
Assume (for simplicity) that $\dim Z=1$. The cohomology of the pro-\'etale sheaf $\mathbb{B}_{\rm dR}/\mathbb{B}_{\rm dR}^+$ can be analyzed via the Poincar\'e lemma : this was done in \cite[Rem. 2.3.21]{theseaclb}\footnote{One needs to write the Tate twists, which was not done in \textit{loc. cit.}, but is easy to do.}. One finds that $H_{\mathrm{pro\acute{et}}}^0(Z_{C}, \mathbb{B}_{\rm dR}/\mathbb{B}_{\rm dR}^+)$ is an extension of $(\O(Z_{C})/C)(-1)$ by $B_{\rm dR}/B_{\rm dR}^+$, that $H_{\mathrm{pro\acute{et}}}^1(Z_{C},\mathbb{B}_{\rm dR}/\mathbb{B}_{\rm dR}^+)= H_{\rm dR}^1(Z) \hat{\otimes}_K B_{\rm dR}/t^{-1}B_{\rm dR}^+$ and that $H_{\mathrm{pro\acute{et}}}^i(Z_{C},\mathbb{B}_{\rm dR}/\mathbb{B}_{\rm dR}^+)=0$ for $i>1$. 

One gets a short exact sequence :
\[ 0 \to (\O(Z_{C})/C)(-1) \to H_{\mathrm{pro\acute{et}}}^1(Z_{C}, \qp) \to \]
\[ \mathrm{Ker}\left(H_{\mathrm{pro\acute{et}}}^1(Z_{C}, \mathbb{B}[1/t]^{\varphi=1}) \to H_{\rm dR}^1(Z) \hat{\otimes}_K B_{\rm dR}/t^{-1}B_{\rm dR}^+ \right) \to 0. \]
Let $k\geq 0$ and $\mathcal{E} = H_{\mathcal{FF}}^1(Z_k) \otimes \O(1)$. The exact sequence, for any $k$,
\[ 0 \to H^0(X_{C^{\flat}},\mathcal{E}) \to H^0(X_{C^{\flat}} \backslash \{ \infty \}, \mathcal{E}) \to \widehat{\mathcal{E}}_{\infty}[1/t] / \widehat{\mathcal{E}}_{\infty} \to 0 \]  
tells us that 
\[ \mathrm{Ker}\left(H_{\mathrm{pro\acute{et}}}^1(Z_{C}, \mathbb{B}[1/t]^{\varphi=1}) \to H_{\rm dR}^1(Z_k)^{\dagger} \hat{\otimes}_K B_{\rm dR}/t^{-1}B_{\rm dR}^+ \right) \]
is isomorphic to
\[ H^0(X_{C^{\flat}},H_{\mathcal{FF}}^1(Z_k) \otimes \O(1))= (H^1(Z_{k,C}, \mathbb{B})^{\dagger})^{\varphi=p}. \]
Taking the inverse limit over $k$\footnote{And using that the $R^1\varprojlim$ term vanishes, as it only involves $H^0$ which can be computed.}, one gets an exact sequence :
\begin{eqnarray}
 0 \to (\O(Z_{C})/C)(-1) \to H_{\mathrm{pro\acute{et}}}^1(Z_{C}, \qp) \to H_{\mathrm{\acute{e}t}}^1(Z_{C}, L\eta_t R\nu'_* \mathbb{B})^{\varphi=p} \to 0. \label{suiteexacte} \end{eqnarray}

We now turn to the relation with the results of \cite{CDN}. Assume that $K$ is a discretely valued extension of $\qp$ with perfect residue field\footnote{This assumption is probably unnecessary.} and that $Z$ has a semi-stable formal model $\mathfrak{Z}$ over $\O_K$. \textit{If the semi-stable version of Conjecture \ref{compconj} was known to be true}, this could be rewritten using log-rigid cohomology. Indeed, let $(\mathfrak{Z}_k)_k$ be an affine covering of $\mathfrak{Z}$ such that $(Z_k)_k$ is a Stein covering of $Z$. Then we would know  that for all $k$ and all $i\geq 0$, $H_{\mathcal{FF}}^i(Z_k)$ is the vector bundle attached to the $(\varphi,N)$-module $H_{\rm logrig}^i(\mathfrak{Z}_{k,s})^{\dagger}$. In particular,
\[ H^0(X_{C^{\flat}}, H_{\mathcal{FF}}^i(Z_k)(1)) = (H_{\rm logrig}^i(\mathfrak{Z}_{k,s})^{\dagger} \otimes_{K_0} B_{\rm st}^+)^{\varphi=p,N=0}. \] 
As a Stein space is partially proper, one deduces easily that
\[ H_{\mathrm{\acute{e}t}}^1(Z_{C}, L\eta_t R\nu'_* \mathbb{B})^{\varphi=p} = (H_{\rm logrig}^1(\mathfrak{Z}_{s}) \hat{\otimes}_{K_0} B_{\rm st}^+)^{\varphi=p,N=0}. \]
Injecting this into the exact sequence \eqref{suiteexacte}, one would recover the exact sequence :
\[ 0 \to (\O(Z_{C})/C)(-1) \to H_{\mathrm{pro\acute{e}t}}^1(Z_{C}, \qp) \to (H_{\rm logrig}^1(\mathfrak{Z}_{s}) \hat{\otimes}_{K_0} B_{\rm st}^+)^{\varphi=p,N=0}  \to 0, \]
proved in \cite{CDN}.  
\\

Even without being able to express the right term of the exact sequence \eqref{suiteexacte} in terms of log-rigid cohomology, one can still say interesting things in some cases. For example, for Drinfeld's coverings of the $p$-adic upper half-plane, Cerednik-Drinfeld uniformization theorem and the Hochschild-Serre spectral sequence, combined with Remark \ref{propercase} (applied to Shimura curves), give a description of the right term of the exact sequence \eqref{suiteexacte} in terms of (classical) local Langlands and Jacquet-Langlands correspondences : this was done in \cite[\S 5]{CDN1} (their argument is formulated differently, but could be transposed to this setting).

\end{document}